\newtheorem{theorem}{Theorem}[section]
\newtheorem{proposition}[theorem]{Proposition}
\newtheorem{corollary}[theorem]{Corollary}
\newtheorem{lemma}[theorem]{Lemma}
\newtheorem{problem}[theorem]{Problem}
\newtheorem{conjecture}[theorem]{Conjecture}
\theoremstyle{definition}
\newtheorem{definition}[theorem]{Definition}
\theoremstyle{remark}
\newtheorem{remark}[theorem]{Remark}
\DeclareMathOperator{\sech}{sech}
\DeclareMathOperator{\cn}{cn}
\DeclareMathOperator{\dn}{dn}
\DeclareMathOperator{\sn}{sn}
\DeclareMathOperator{\am}{am}
\newcommand{\ds}{\partial_s}
\newcommand{\dx}{\partial_x}
\newcommand{\dm}{\partial_m}
\newcommand{\R}{\mathbf{R}}
\newcommand{\Z}{\mathbf{Z}}
\newcommand{\Q}{\mathbf{Q}}
\newcommand{\bS}{\mathbf{S}}
\begin{document}

\title{Elastic curves and self-intersections}
\author[T.~Miura]{Tatsuya Miura}
\address[T.~Miura]{Department of Mathematics, Graduate School of Science, Kyoto University, Kitashirakawa Oiwake-cho, Sakyo-ku, Kyoto 606-8502, Japan}
\email{tatsuya.miura@math.kyoto-u.ac.jp}
%\keywords{Elastica, Li--Yau inequality, elastic knot, $p$-elastica, elastic flow.}
\subjclass[2020]{53A04, 49Q10, and 53E40}
\date{\today}

\begin{abstract}
  %Variational theory for elastica is initiated by D.\ Bernoulli and L.\ Euler in the 18th century but still continues to evolve in many fields including geometric analysis and the calculus of variations.
  This is an expository note to give a brief review of classical elastica theory, mainly prepared for giving a more detailed proof of the author's Li--Yau type inequality for self-intersecting curves in Euclidean space. 
  We also discuss some open problems in related topics.
\end{abstract}

\maketitle

\tableofcontents

\section{Introduction}

This is an expository note to supplement and follow up the author's recent work on a Li--Yau type inequality for closed curves in Euclidean space \cite{Miura_LiYau}.
The inequality roughly claims that ``more bending is needed for more multiplicity'', as in the original Li--Yau inequality for surfaces \cite{LiYau}.
Hence it is closely related to elastica theory, a variational problem involving the bending energy for curves.

Variational theory for elastica is initiated by D.\ Bernoulli and L.\ Euler in the 18th century (see e.g.\ \cite{Lev,Tru83} for the history).
An immersed curve $\gamma$ in $\R^n$ is called an \emph{elastica} if it is a critical point of the \emph{bending energy} $B[\gamma]:=\int_\gamma|\kappa|^2ds$ among curves with fixed length $L[\gamma]:=\int_\gamma ds$.
Elastica theory is the oldest variational model of elastic rods (see e.g.\ \cite{Love,LLbook,APbook} for physical contexts), and also the most typical example of geometric higher-order variational problems.
However, this classical theory still continues to evolve in many fields including geometric analysis and the calculus of variations (see e.g.\  \cite{Miura20,miura2024uniqueness} and references therein).

A complete analysis for closed elasticae in Euclidean space is already given by Langer--Singer in the 1980's \cite{LS_JDG,LS_1984_JLMS,LS_85} (see also the lecture notes \cite{Sin}).
In particular, they showed that the only stable closed elastica in $\R^3$ is the one-fold circle \cite{LS_85}.
However, in practice, we can observe many other ``stable'' shapes of closed wires.
Those shapes can also be theoretically treated if one properly takes account of the effect of self-intersections.

The Li--Yau type inequality in \cite{Miura_LiYau} reveals how the bending energy depends on self-intersections at a single point.
Define the \emph{normalized bending energy}
\[
\bar{B}[\gamma]:=L[\gamma]B[\gamma].
\]
We say that $\gamma$ has a point of multiplicity $r$ if there is $P\in\R^n$ such that $\gamma^{-1}(P)$ has at least $r$ distinct points.
Finally, using the complete elliptic integrals of the first kind $K$ and of the second kind $E$, we define a unique parameter $m^*\in(0,1)$ such that $2E(m^*)=K(m^*)$ and then define the universal constant
\[
\varpi^*:=32(2m^*-1)E(m^*)\simeq 28.109...
\]

\begin{theorem}[{\cite{Miura_LiYau}}]\label{thm:intro_Li-Yau}
    Let $n,r\geq2$ be integers.
    Let $\gamma:\R/\Z\to\R^n$ be an immersed closed curve of class $W^{2,2}$ with a point of multiplicity $r$.
    Then
    \[
    \bar{B}[\gamma] \geq \varpi^* r^2.
    \]
    In addition, equality holds if and only if $\gamma$ is a closed $r$-leafed elastica.
    Such a closed $r$-leafed elastica exists if and only if either $n\geq3$ or $r$ is even.
    If $n=2$ and $r$ is odd, then there is $\varepsilon_r>0$ such that
    \[
    \bar{B}[\gamma] \geq \varpi^* r^2 + \varepsilon_r.
    \]
\end{theorem}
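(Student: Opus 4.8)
The plan is to localize the estimate to single ``loops'' by Cauchy--Schwarz and then to reduce the resulting loop inequality to the variational classification of elasticae recalled below. Choose $t_1<\dots<t_r$ in $\R/\Z$ with $\gamma(t_i)=P$, and let $\gamma_1,\dots,\gamma_r$ be the $r$ sub-arcs of $\gamma$ they determine; each $\gamma_i$ is an immersed $W^{2,2}$ \emph{loop}, i.e.\ a curve joining $P$ to $P$. Since $L$ and $B$ are additive over this partition and $\bar B=LB$,
\[
\bar B[\gamma]=\Bigl(\sum_i L[\gamma_i]\Bigr)\Bigl(\sum_i B[\gamma_i]\Bigr)\ \geq\ \Bigl(\sum_i\sqrt{L[\gamma_i]\,B[\gamma_i]}\,\Bigr)^{2}=\Bigl(\sum_i\sqrt{\bar B[\gamma_i]}\,\Bigr)^{2}.
\]
Thus everything reduces to the \emph{loop inequality} $\bar B[\eta]\geq\varpi^*$ for every immersed $W^{2,2}$ loop $\eta$, with equality exactly when $\eta$ is a similarity image of a \emph{leaf} --- the arc of the wavelike elastica of modulus $m^*$ between two consecutive inflection points, which closes up precisely because $2E(m^*)=K(m^*)$ and satisfies $\bar B[\mathrm{leaf}]=\varpi^*$. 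Indeed the display then gives $\bar B[\gamma]\geq r^2\varpi^*$, and equality forces equality in Cauchy--Schwarz together with $\bar B[\gamma_i]=\varpi^*$ for all $i$; these two conditions force all $\gamma_i$ to have equal length and each to be a leaf, so $\gamma$ is a closed curve assembled from $r$ congruent leaves joined $C^1$ at $P$, i.e.\ a closed $r$-leafed elastica (and then necessarily $\gamma^{-1}(P)$ has exactly $r$ points).

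To prove the loop inequality I would use the scale invariance of $\bar B$ to minimize $B[\eta]$ over immersed $W^{2,2}$ loops of length $1$. The infimum is positive, since if $\|\eta''\|_{L^2}$ is small then $\eta'$ is nearly constant, which is incompatible with $\eta(0)=\eta(1)$ and $L[\eta]=1$. The direct method --- arc-length reparametrization, $W^{2,2}$-weak and $C^1$-strong precompactness, lower semicontinuity of $B$ --- yields a minimizer $\eta_0$, which by elliptic regularity is smooth. As only the coincidence of the endpoints is constrained (the tangents there being free), $\eta_0$ solves the elastica equation in the interior and satisfies the natural boundary condition $\kappa=0$ at both endpoints, plus a balanced-force condition at $P$. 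By the classification of elasticae, one with two zeros of curvature is planar and is an arc of a wavelike elastica (a straight segment cannot close up; circles, orbitlike and borderline elasticae have curvature that is never zero or never changes sign). Such an arc consists of an integer number $j\geq1$ of half-periods (leaves), and a direct computation of its end-to-end displacement shows that it closes up into a loop if and only if the modulus equals $m^*$, in which case it is a chain of $j$ congruent leaves with $\bar B=j^2\,\bar B[\mathrm{leaf}]$. Since the single leaf is itself admissible, the infimum equals $\bar B[\mathrm{leaf}]$ and $j=1$ at $\eta_0$; and $\bar B[\mathrm{leaf}]=\varpi^*$ follows from the explicit ($\cn$-type) curvature profile of the figure-eight elastica and standard elliptic-integral identities together with $2E(m^*)=K(m^*)$. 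This establishes the loop inequality and its equality case.

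It remains to decide when $r$ congruent leaves can be glued $C^1$ at a common point into a closed curve. A leaf turns the tangent by a fixed angle $\Theta$ (one computes $\Theta=4\arcsin\sqrt{m^*}$ for the $\cn$-branch) and its mirror image turns it by $-\Theta$, so the rotation index of such a closed planar curve equals $\tfrac{\Theta}{2\pi}$ times the difference between the numbers of leaves of the two handednesses; since $\Theta$ is not a rational multiple of $\pi$, this index is an integer only when those two numbers coincide, i.e.\ when $r$ is even, and for even $r$ such a configuration --- hence a closed $r$-leafed planar elastica --- does exist. For $n\geq3$ the extra dimension lets one place the leaves in distinct planes so that consecutive tangents match for every $r$, producing a closed $r$-leafed elastica. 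This gives the existence dichotomy, and in the existence cases equality in the main inequality is attained by such a curve (rescaled to a given length).

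Finally, for $n=2$ and $r$ odd, suppose there were planar immersed $W^{2,2}$ closed curves $\gamma_k$ with a point of multiplicity $r$ and $\bar B[\gamma_k]\to\varpi^*r^2$. Normalizing $L[\gamma_k]=1$, the resulting bound on $B[\gamma_k]$ and a translation give $C^1$-subconvergence to an immersed unit closed curve $\gamma_\infty$. The $r$ preimages of the multiple point cannot collide in the limit --- a sub-arc of vanishing length would be a loop whose bending energy blows up by the loop inequality, contradicting the bound on $B[\gamma_k]$ --- so $\gamma_\infty$ still has a point of multiplicity $r$; by lower semicontinuity and the main inequality $\bar B[\gamma_\infty]=\varpi^*r^2$, so $\gamma_\infty$ is a planar closed $r$-leafed elastica with $r$ odd, which does not exist. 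Hence $\inf\bar B>\varpi^*r^2$ over this class, and one takes $\varepsilon_r$ to be the positive difference. The main obstacle in this plan is the Euler--Lagrange step: pinning down the figure-eight leaf among all constrained critical points requires both the planarity of elasticae with an inflection point and a careful displacement computation ruling out wavelike arcs of other moduli or of longer period, either of which would otherwise threaten the sharp constant; the compactness argument for the $n=2$, odd-$r$ gap --- ensuring the multiple point persists with multiplicity $r$ --- is the other delicate point.
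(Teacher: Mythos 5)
Your architecture is essentially the paper's: cut at the multiple point, reduce via Cauchy--Schwarz (the paper phrases this step through the equivalent AM--HM inequality) to a sharp loop inequality, and prove that inequality by the direct method for the pinned problem, regularity with the natural boundary condition $\kappa=0$, planarity of elasticae having an inflection point, the wavelike classification, and the closure condition $2E(m)=K(m)$ forcing $m=m^*$ with a single half-period; your compactness argument for the $n=2$, odd-$r$ gap is also consistent with how the statement is meant to be obtained. So most of the proposal is sound.

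There is, however, one genuine gap, and it sits exactly where the theorem is hardest: in the existence dichotomy you exclude planar closed $r$-leafed elasticae for odd $r$ ``since $\Theta=4\arcsin\sqrt{m^*}$ is not a rational multiple of $\pi$,'' offered as if it were a known or easy fact. It is neither: $m^*$ is defined only implicitly by the transcendental equation $2E(m^*)=K(m^*)$, so ruling out $k\Theta\in2\pi\Z$ for odd $k$ amounts to showing that $m^*$ is not of the form $\sin^2(q\pi)$ for the relevant rationals $q$, i.e.\ to a transcendence-type statement about $m^*$. The only known route is Andr\'e's theorem on the algebraic independence over $\overline{\Q}$ of the hypergeometric values $K(m)=\tfrac{\pi}{2}\,{}_2F_1[\tfrac12,\tfrac12;1;m]$ and $E(m)=\tfrac{\pi}{2}\,{}_2F_1[-\tfrac12,\tfrac12;1;m]$ at algebraic $m$, which forbids the algebraic relation $2E(m)-K(m)=0$ at any algebraic $m\in(0,1)$ and hence shows $m^*$ is transcendental; this is precisely the ingredient the paper flags as the delicate ``non-optimality part.'' Without it, both the ``only if'' half of the existence statement and, through your own compactness argument (which derives $\varepsilon_r>0$ from that non-existence), the final gap inequality for $n=2$ and odd $r$ remain unproved. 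A secondary, much smaller point: for $n\geq3$ you should verify that a closed chain of $r$ unit tangents on the sphere with the fixed consecutive angle determined by the leaf actually exists for every $r\geq2$ (it does, since that angle is below $2\pi/3$, but your proposal asserts it without check).
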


In particular, in dimension $n=3$, equality holds for any multiplicity $r\geq2$.
Equality is attained by a new family of curves, which we call \emph{leafed elasticae}.
This is a generalization of Euler's figure-eight elastica; more precisely, an $r$-leafed elastica consists of $r$ leaves of equal length, and each leaf is given by a half-fold figure-eight elastica (Figure \ref{fig:figureeight}).
For example, a closed $2$-leafed elastica is uniquely given by the figure-eight elastica itself.
A closed $3$-leafed elastica is also unique, but given by a new shape (not a classical elastica) which we call the \emph{elastic propeller} (Figure \ref{fig:propeller}).
Remarkably, the elastic propeller can be realized as a stable closed wire in the three-dimensional space (Figure \ref{fig:propeller_photo}), although the figure-eight elastica cannot.

\begin{figure}[htbp]
    \begin{minipage}[b]{0.45\hsize}
        \centering
        \includegraphics[width=40mm]{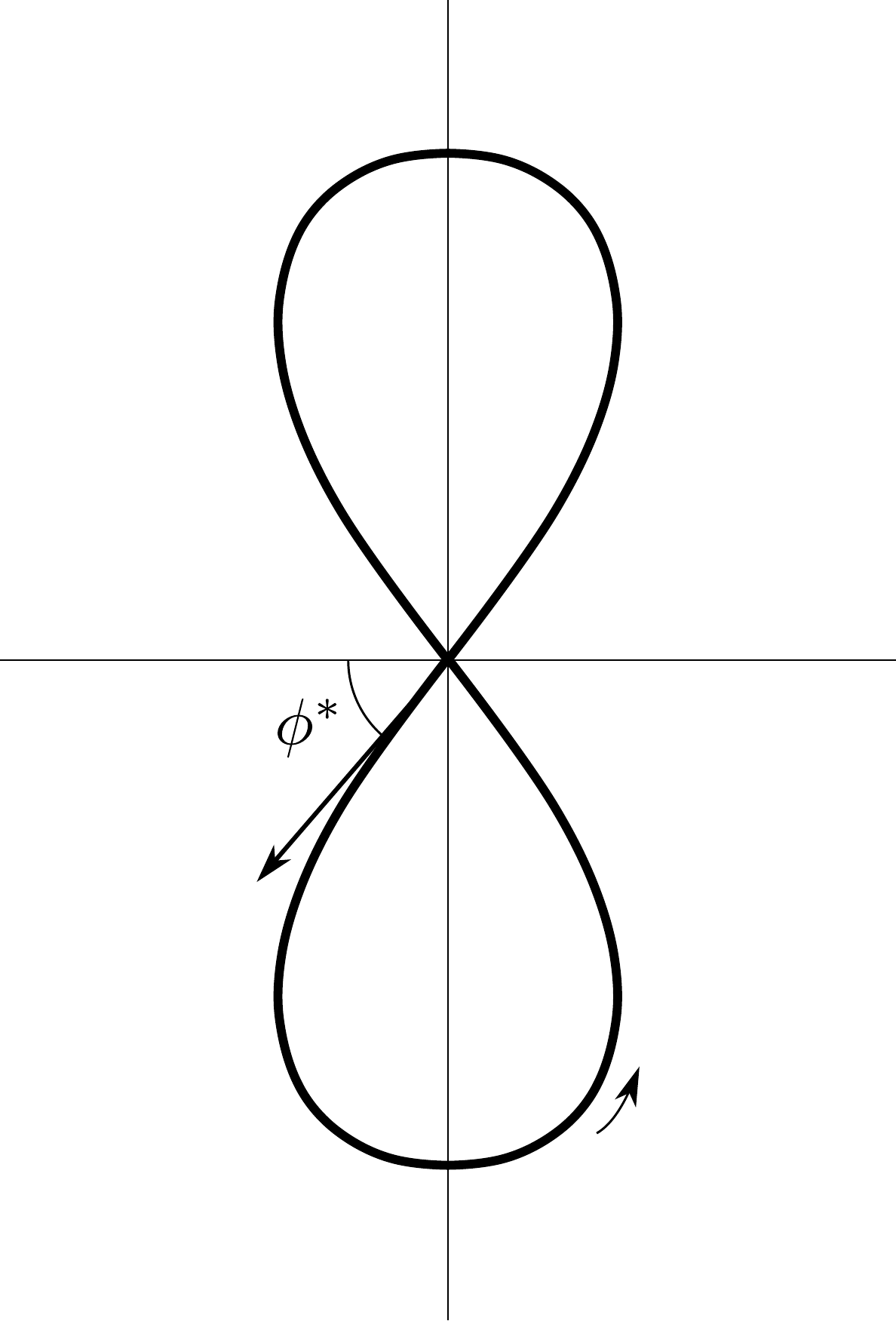}
    \end{minipage}
    \begin{minipage}[b]{0.45\hsize}
        \centering
        \includegraphics[width=20mm]{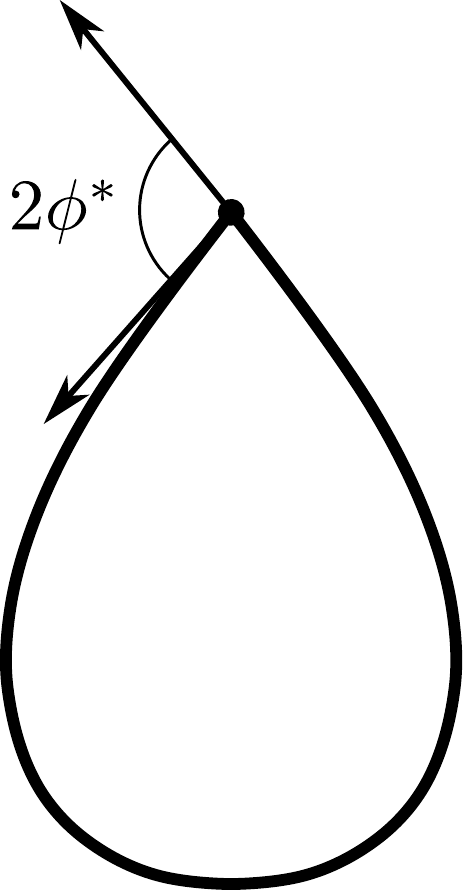}
        \vspace{10mm}
    \end{minipage}
    \caption{Figure-eight elastica (left) and leaf (right). Figures adapted from \cite{Miura_LiYau}.}
    \label{fig:figureeight}
\end{figure}

\begin{figure}[htbp]
    \includegraphics[width=100mm]{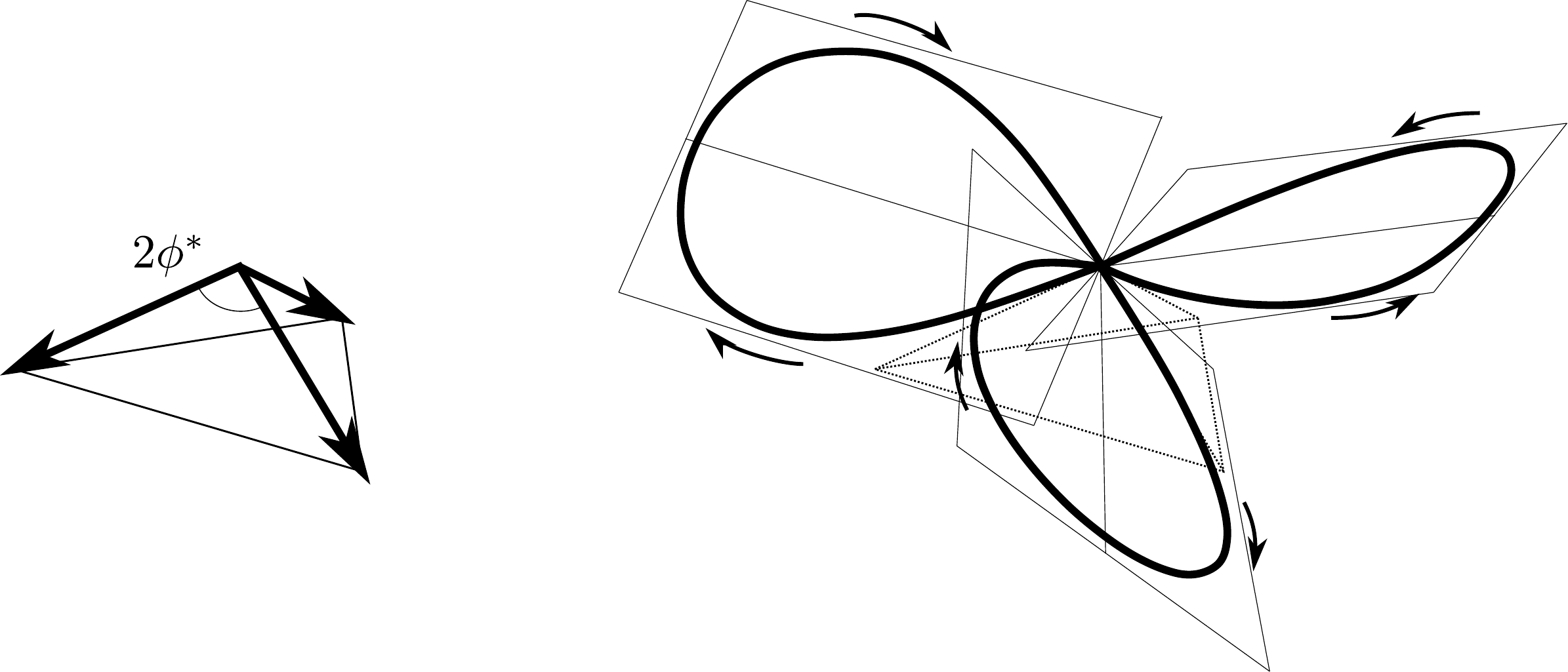}
    \caption{Elastic propeller: The unique closed $3$-leafed elastica \cite{Miura_LiYau}.}
    \label{fig:propeller}
\end{figure}

\begin{figure}[htbp]
  \includegraphics[width=50mm]{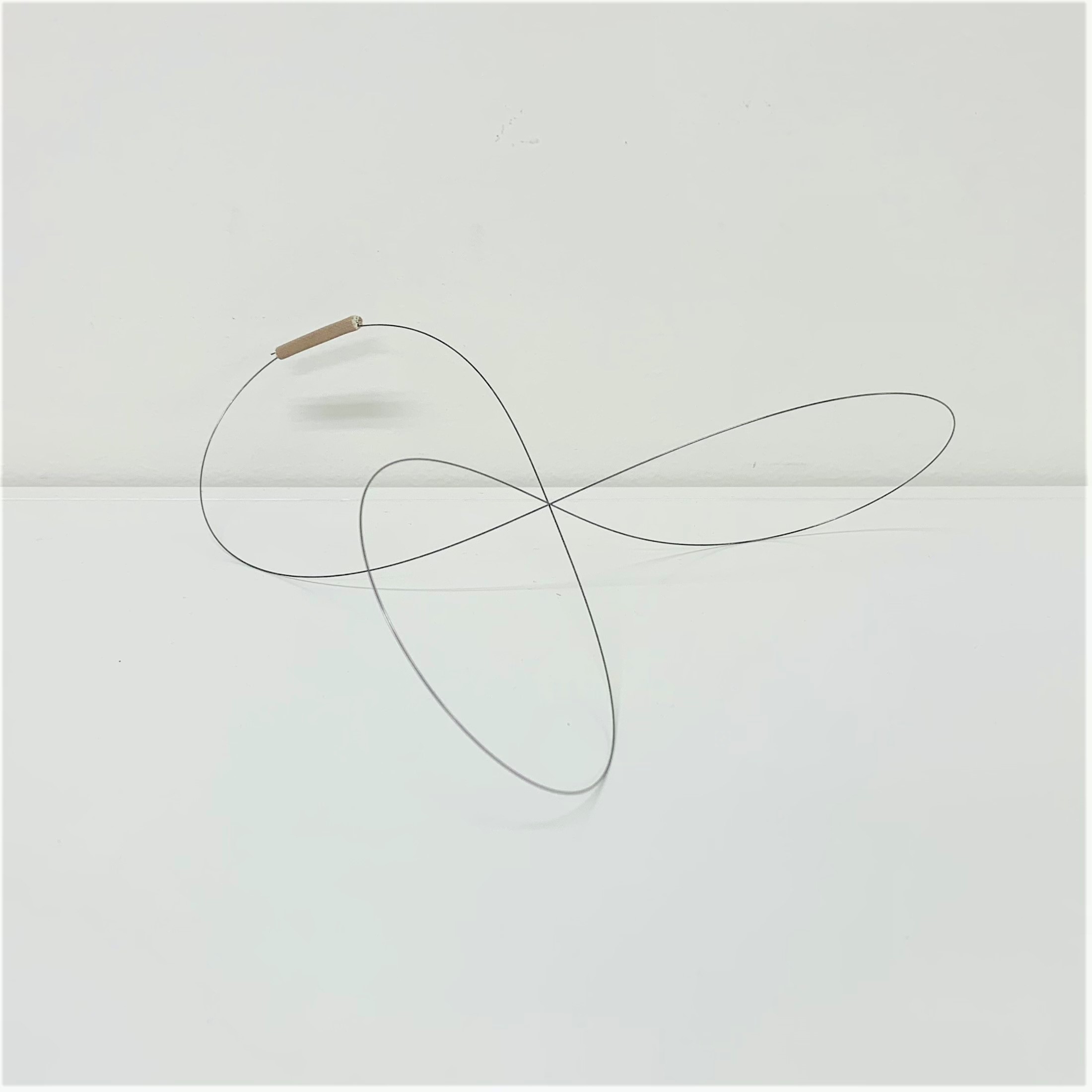}
  \qquad
  \includegraphics[width=50mm]{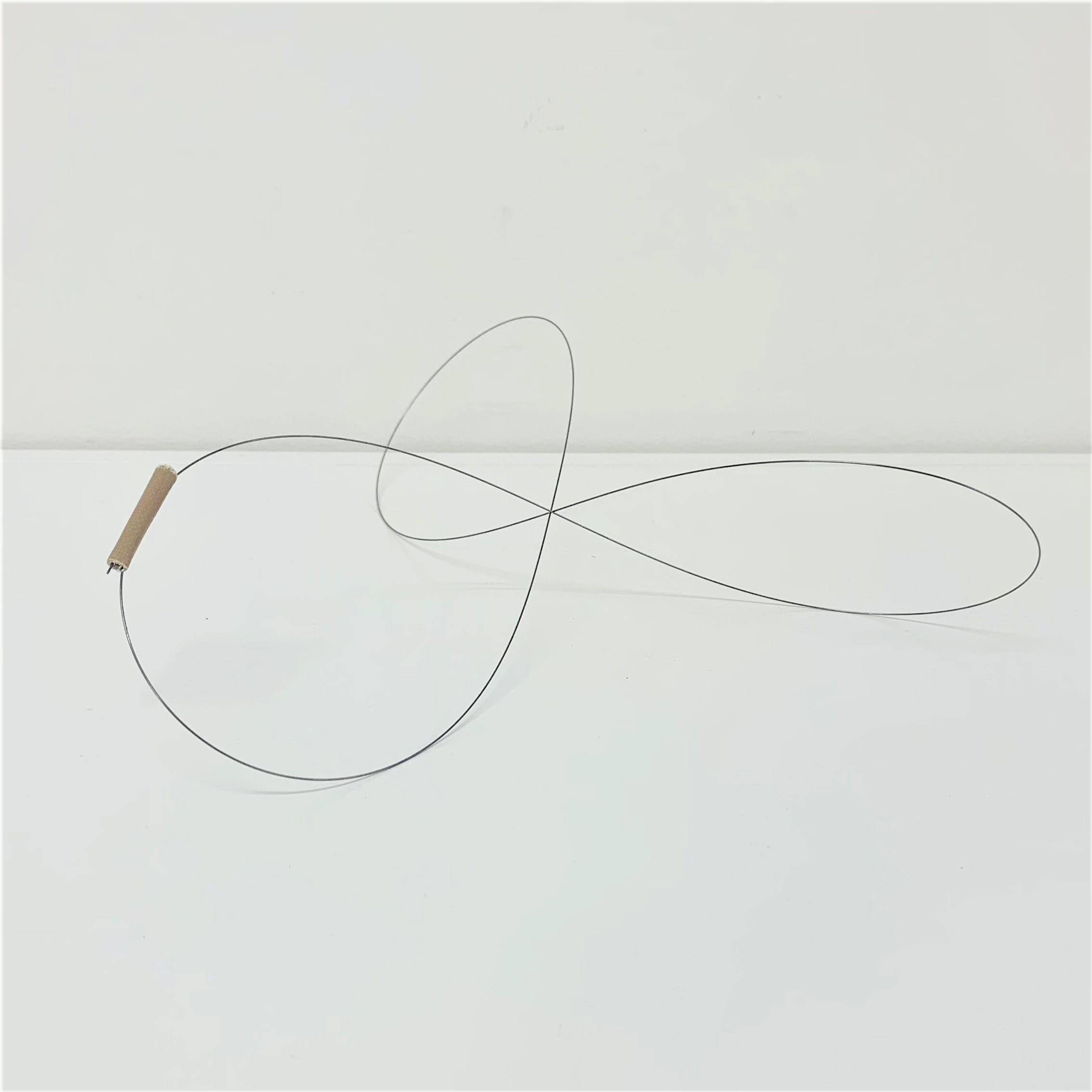}
  \caption{Propeller made of unknotted wire \cite{Miura_LiYau}.}
  \label{fig:propeller_photo}
\end{figure}

An important step towards Theorem \ref{thm:intro_Li-Yau} is to show unique minimality of the leaf, as in Theorem \ref{thm:leaf_minimality} below (corresponding to \cite[Proposition 2.6]{Miura_LiYau}).
The original argument in \cite{Miura_LiYau} crucially relies on previous works about classical elastica theory, so it might not be easy to follow for the readers unfamiliar with this field.

The purpose of this paper is to give a detailed and self-contained proof of Theorem \ref{thm:leaf_minimality}.
To this end, we will briefly review classical elastica theory from both geometric and analytic points of view.
Recall that non-optimal versions of Theorem \ref{thm:intro_Li-Yau} (mostly with $\varpi^*$ replaced by $16$) were previously obtained in several ways \cite{PolPhD,Mosel1998,Wheeler2013,Wojtowytsch2021}, which require no precise understanding of elasticae.
The optimal inequality was first obtained by M\"uller--Rupp \cite{MR23} in the special case $(n,r)=(2,2)$, where planar elastica theory is used.
For the general case we need to use elastica theory in $\R^n$.

This paper requires only modest backgrounds to read; classical differential geometry for curves, ODE theory including the Picard-Lindel\"{o}f theorem, and basic tools in functional analysis involving Sobolev spaces.
Hence, this paper would also be useful as a beginner's guide for students and researchers who are interested in elastica theory.

This paper is organized as follows.
Sections \ref{sec:elastica_geom} and \ref{sec:elastica_anal} are reviews on classical elastica theory.
Section \ref{sec:elastica_geom} focuses on geometric aspects, while Section \ref{sec:elastica_anal} is more analytic.
Section \ref{sec:Li-Yau} completes the proof of Theorem \ref{thm:leaf_minimality}, and then discusses some open problems around the Li--Yau type inequality.
In Appendix \ref{sec:elliptic} we review fundamental properties of Jacobi elliptic integrals and functions for the reader's convenience.

\subsection*{Acknowledgements}
This work is supported by JSPS KAKENHI Grant Numbers JP21H00990, JP23H00085, and JP24K00532.

\section{Geometric perspective on elastica theory}\label{sec:elastica_geom}

We first review some fundamental results in classical elastica theory within the class of smooth curves in Euclidean space (cf.\ \cite[Section 1]{Sin}).
More analytical results involving weak derivatives will be discussed separately in the next section.

\subsection{Bending energy}

Let $I=(a,b)\subset\mathbf{R}$ be an open interval and $\bar{I}:=[a,b]$.
Let $\gamma:\bar{I}\to\mathbf{R}^n$ be a smooth curve, that is, a smooth map (say, of class $C^\infty$) with non-vanishing derivative $|\gamma'(x)|>0$ for any $x\in\bar{I}$.
Here and in the sequel the dimension $n\geq2$ is arbitrary if not specified.
Define the length functional
\begin{equation}\label{eq:def_length}
  L[\gamma]:=\int_I ds,
\end{equation}
where $ds$ is understood as the arclength measure $ds:=|\gamma'|dx$.
For a differentiable map 
$\psi:\bar{I}\to\mathbf{R}^d$ with any $d\geq1$, we define the arclength derivative along $\gamma$ by 
$$\ds\psi=\psi_s:=\frac{1}{|\gamma'|}\psi'.$$
%We will write higher order derivatives in several ways such as $\partial_s^3\psi,\psi_{sss},\psi_{s^3}$.
In particular, throughout this manuscript, let $T:\bar{I}\to\R^n$ denote the unit tangent
$$T:=\ds\gamma,$$
and $\kappa:\bar{I}\to\R^n$ denote the curvature vector
$$\kappa:=\ds^2\gamma.$$
The \emph{bending energy} of $\gamma$ is then defined by
\begin{equation}\label{eq:def_bending}
  B[\gamma] := \int_I |\kappa|^2ds.
\end{equation}
This quantity is also called the total squared curvature, the Euler--Bernoulli energy, Euler's elastic energy and so on.
[Note: Sometimes the bending energy is simply referred to as ``elastic energy'' but this might be (physically) confusing because the bending energy does not encompass other elastic effects like stretching or twisting.]

We emphasize that the bending energy is different from the simple second-order Dirichlet energy $\int_I|\gamma''|^2dx$ unless $|\gamma'|\equiv1$.
In fact, we compute
\begin{equation}
  \ds\gamma=\frac{1}{|\gamma'|}\gamma', \quad \ds^2\gamma=\frac{1}{|\gamma'|^2}\gamma''-\frac{\langle\gamma',\gamma''\rangle}{|\gamma'|^4}\gamma',
\end{equation}
and hence
\begin{equation}\label{eq:def_bending2}
  B[\gamma] = \int_I \left| \frac{1}{|\gamma'|^2}\gamma''-\frac{\langle\gamma',\gamma''\rangle}{|\gamma'|^4}\gamma' \right|^2|\gamma'|dx
  = \int_I \left(\frac{|\gamma''|^2}{|\gamma'|^3}- \frac{\langle\gamma',\gamma''\rangle^2}{|\gamma'|^5} \right)dx.
\end{equation}

The bending energy $B$ and the length $L$ have important geometric invariances: For any Euclidean isometry $\Phi:\mathbf{R}^n\to\mathbf{R}^n$ and reparametrization $\xi:\bar{J}\to \bar{I}$ (with $|\xi'|>0$) the quantity is preserved, namely,
$$B[\Phi\circ\gamma\circ\xi]=B[\gamma], \quad L[\Phi\circ\gamma\circ\xi]=L[\gamma].$$
Recall that any Euclidean isometry is given by an affine map $\Phi(x)=Ax+b$, where $A\in O(n)$ is an orthogonal matrix and $b\in \mathbf{R}^n$.

Note however that the energies are not preserved by dilation.
For example, if $C_r$ is a planar circle of radius $r$, then $|\kappa|=1/r$ and hence $B[C_r]=(1/r)^2L[C_r]=2\pi /r$.
In general, we have
\begin{equation}
  B[\Lambda\gamma]=\frac{1}{\Lambda}B[\gamma], \quad L[\Lambda\gamma]= \Lambda L[\gamma] \quad \mbox{for $\Lambda>0$}.
\end{equation}
In particular, in the class of closed curves $\gamma$, the bending energy has no minimizer attaining $\inf_\gamma B=0$, and even no critical point since the rescaling $(1+\varepsilon)\gamma$ gives an energy-decreasing perturbation for $\varepsilon>0$.

On the other hand, the above scaling property implies that the normalized bending energy
\begin{equation}
  \bar{B}[\gamma]:=L[\gamma]B[\gamma]=L\int_\gamma|\kappa|^2ds
\end{equation}
is scaling invariant in the sense that
\begin{equation}
  \bar{B}[\Lambda \gamma]=\bar{B}[\gamma] \quad \mbox{for $\Lambda>0$}.
\end{equation}
The normalized bending energy has a nontrivial minimum among closed curves attained only by a round circle.

\begin{theorem}
  Let $\gamma$ be a smooth closed curve in $\mathbf{R}^n$.
  Then
  \begin{equation}
    \bar{B}[\gamma] \geq 4\pi^2,
  \end{equation}
  where equality holds if and only if $\gamma$ is a planar round circle.
\end{theorem}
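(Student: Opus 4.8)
The plan is to derive this from Fenchel's theorem on the total curvature together with the Cauchy--Schwarz inequality. By the scaling invariance $\bar{B}[\Lambda\gamma]=\bar{B}[\gamma]$ we may assume $\gamma$ is parametrized by arclength, say $\gamma\colon\R/L\Z\to\R^n$ with $|\gamma'|\equiv1$, so that $\kappa=\gamma''$ and $B[\gamma]=\int_0^L|\kappa|^2\,ds$. First I would relate $B[\gamma]$ to the \emph{total curvature} $\int_0^L|\kappa|\,ds$. Since the unit tangent $T=\gamma'$ takes values in the unit sphere $S^{n-1}$ and $|T'|=|\kappa|$, the total curvature is precisely the length of the tangent indicatrix $T\colon\R/L\Z\to S^{n-1}$. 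Moreover, closedness of $\gamma$ gives $\int_0^L T\,ds=\gamma(L)-\gamma(0)=0$, hence $\int_0^L\langle T,v\rangle\,ds=0$ for every $v\in S^{n-1}$; in particular $T$ meets every closed hemisphere, i.e.\ it is not contained in any open hemisphere of $S^{n-1}$.

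The key geometric input is then the classical hemisphere lemma (Fenchel, with the short argument of Horn): a closed rectifiable curve on $S^{n-1}$ that is not contained in any open hemisphere has length at least $2\pi$. I would include its elementary proof: given such a curve $\Gamma$ of length $\ell<2\pi$, choose two points splitting $\Gamma$ into arcs of equal length $\ell/2<\pi$; the endpoints are then non-antipodal, so the minor geodesic joining them has a well-defined midpoint $m$, and reflecting one of the two arcs in the line $\R m$ (an isometry of $S^{n-1}$) produces a symmetric closed curve on which, using only the spherical triangle inequality and the fact that antipodal points lie at distance exactly $\pi$, one checks that $\Gamma$ is contained in the open hemisphere centered at $m$ --- contradicting the hypothesis. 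Applying this to the indicatrix $T$ yields $\int_0^L|\kappa|\,ds\ge2\pi$.

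Finally, Cauchy--Schwarz gives $\big(\int_0^L|\kappa|\,ds\big)^2\le L\int_0^L|\kappa|^2\,ds=\bar{B}[\gamma]$, whence $\bar{B}[\gamma]\ge(2\pi)^2=4\pi^2$. For the equality case, equality in Cauchy--Schwarz forces $|\kappa|\equiv c$ for a constant $c>0$, and then $cL=\int_0^L|\kappa|\,ds=2\pi$, so the total curvature equals exactly $2\pi$; by the rigidity part of Fenchel's theorem this forces $\gamma$ to be a convex planar curve, and a convex planar curve with constant curvature is a round circle of radius $1/c$. (Equivalently, minimality of the length of $T$ makes the indicatrix a great circle, so $T''\parallel T$, i.e.\ $N':=\kappa'/c=-c\gamma'$; then $N+c\gamma$ is constant, say equal to $cP$, and $\gamma''=cN=-c^2(\gamma-P)$ integrates to a circle of radius $1/c$ centered at $P$.) Conversely a circle of radius $r$ has $L=2\pi r$ and $B=2\pi/r$, so $\bar{B}=4\pi^2$.

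The main obstacle is the hemisphere lemma and, more delicately, its rigidity counterpart invoked in the equality discussion: one must ensure that ``total curvature $=2\pi$'' genuinely forces planarity (not merely constant $|\kappa|$, which alone is satisfied by non-closed helices), and this is exactly the content of the equality case in Fenchel's theorem; the remaining steps are routine.
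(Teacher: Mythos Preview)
Your argument is correct and follows essentially the same route as the paper: Cauchy--Schwarz gives $\bar{B}[\gamma]\geq TC[\gamma]^2$, Fenchel's theorem gives $TC[\gamma]\geq 2\pi$, and the equality cases of both combine to single out the round circle. The only difference is that you supply a proof sketch of Fenchel's theorem via Horn's hemisphere lemma, whereas the paper simply invokes Fenchel as a known result.
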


\begin{proof}
  We define the total (absolute) curvature by
  \begin{equation}
    TC[\gamma]:=\int_\gamma|\kappa|ds.
  \end{equation}
  The Cauchy--Schwarz inequality $\|f\|_{L^2}\|g\|_{L^2}\geq (f,g)_{L^2}$ and Fenchel's theorem $TC[\gamma]\geq2\pi$ imply that
  \begin{equation*}
    \bar{B}[\gamma]=L\int_0^L|\kappa|^2ds \geq \left(\int_0^L|\kappa|ds\right)^2 = TC[\gamma]^2 \geq 4\pi^2.
  \end{equation*}
  In addition, if $\bar{B}[\gamma]=4\pi^2$, then equality in the first (Cauchy--Schwarz) inequality implies that $|\kappa|$ is constant, while the second (Fenchel) implies that $\gamma$ is convex and planar.
  Thus, $\bar{B}[\gamma]=4\pi^2$ holds if and only if $\gamma$ is a planar round circle.
\end{proof}

An immediate corollary is

\begin{corollary}
  In the class of smooth closed curves in $\mathbf{R}^n$ of length $L>0$, the bending energy $B$ is minimized by a round circle of radius $\frac{L}{2\pi}$, which is a unique minimizer up to invariances (isometries and reparametrizations).
\end{corollary}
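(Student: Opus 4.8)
The plan is to deduce the statement directly from the preceding theorem by fixing the length. Let $\gamma$ be any smooth closed curve in $\R^n$ with $L[\gamma]=L$. Then the defining identity $\bar{B}[\gamma]=L[\gamma]\,B[\gamma]$ turns the theorem's lower bound into
\[
B[\gamma]=\frac{1}{L}\,\bar{B}[\gamma]\;\geq\;\frac{4\pi^2}{L},
\]
so $\frac{4\pi^2}{L}$ is a lower bound for $B$ on the class of length-$L$ closed curves.

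Next I would check that this bound is attained and identify the attaining curve. A planar round circle of radius $\rho$ has length $2\pi\rho$; taking $\rho=\frac{L}{2\pi}$ produces a planar round circle of length exactly $L$, and for it the equality case of the theorem gives $\bar{B}=4\pi^2$, hence $B=\frac{4\pi^2}{L}$. Thus the infimum is attained and equals $\frac{4\pi^2}{L}$, realized by the circle of radius $\frac{L}{2\pi}$.

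For the uniqueness claim I would invoke the equality case of the theorem once more: if $\gamma$ has length $L$ and $B[\gamma]=\frac{4\pi^2}{L}$, then $\bar{B}[\gamma]=4\pi^2$, so $\gamma$ is a planar round circle, and the length constraint pins its radius to $\frac{L}{2\pi}$. Any two round circles of the same radius differ by a Euclidean isometry composed with an affine reparametrization of the domain, so they coincide up to the invariances recorded above. I do not expect any real obstacle here; the corollary is immediate once the theorem is in hand, and the only points worth stating carefully are the elementary length--radius relation $L=2\pi\rho$ for circles and the fact (already noted) that $B$ and $L$ are preserved by isometries and reparametrizations, which is precisely what makes ``unique up to invariances'' a well-posed assertion.
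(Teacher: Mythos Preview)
Your proposal is correct and matches the paper's approach: the paper states this corollary as ``an immediate corollary'' without further proof, and your argument is exactly the straightforward deduction from the preceding theorem that the paper has in mind.
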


Now we turn to more general problems; local minimizers, critical points, other boundary value problems and so on.
In order to address these problems, an important first step is to investigate the first variation of the bending energy, and also of the length in view of the multiplier method.
Then we obtain the corresponding Euler--Lagrange equation.

\subsection{Elastica: The Euler--Lagrange equation}

Now we formally derive the Euler--Lagrange equation satisfied by critical points of the bending energy among smooth curves subject to the fixed-length constraint.
In view of the standard multiplier method (later justified in the framework of Sobolev spaces), if a smooth curve $\gamma:\bar{I}\to\R^n$ is such a length-constrained critical point, then there exists $\lambda\in\mathbf{R}$ such that $\gamma$ is a critical point of the energy $B+\lambda L$ without length constraint.
Hence at least for any linear perturbation $\gamma_\varepsilon:=\gamma+\varepsilon\eta$ with $\eta\in C^\infty_c(I;\mathbf{R}^n)$,
$$\frac{d}{d\varepsilon}\big(B[\gamma_\varepsilon]+\lambda L[\gamma_\varepsilon]\big)\Big|_{\varepsilon=0}=0.$$
Here the class $C^\infty_c$ means $C^\infty$ functions compactly supported on $I$.

Let $\gamma:\bar{I}\to\mathbf{R}^n$ be a smooth curve.
For a perturbation $\gamma_\varepsilon:=\gamma+\varepsilon\eta$ with $\eta\in C^\infty(\bar{I};\mathbf{R}^n)$ we calculate the first variation by using formulae \eqref{eq:def_length} and \eqref{eq:def_bending2}:
\begin{align}
  \frac{d}{d\varepsilon}L[\gamma_\varepsilon]\Big|_{\varepsilon=0} &=  \int_I \frac{d}{d\varepsilon}|\gamma'+\varepsilon\eta'| \Big|_{\varepsilon=0} dx =  \int_I\frac{\langle\gamma',\eta'\rangle}{|\gamma'|}dx, \label{eq:FVLgeneral}\\
  \frac{d}{d\varepsilon}B[\gamma_\varepsilon]\Big|_{\varepsilon=0} &= \int_I \left(\frac{2\langle\gamma'',\eta''\rangle}{|\gamma'|^3}-\frac{3|\gamma''|^2\langle\gamma',\eta'\rangle}{|\gamma'|^5} \right. \label{eq:FVBgeneral}\\
  & \qquad \left. - \frac{2\langle\gamma',\gamma''\rangle\big( \langle\gamma',\eta''\rangle+\langle\gamma'',\eta'\rangle \big)}{|\gamma'|^5} + \frac{5\langle\gamma',\gamma''\rangle^2\langle\gamma',\eta'\rangle}{|\gamma'|^7} \right)dx. \nonumber
\end{align}

Now we express the formulae in terms of the weighted measure $ds=|\gamma'|dx$ on $I$.
We deduce from \eqref{eq:FVLgeneral} and \eqref{eq:FVBgeneral} that
\begin{align}
  \frac{d}{d\varepsilon}L[\gamma_\varepsilon]\Big|_{\varepsilon=0} &= \int_I\langle\gamma_s,\eta_s\rangle ds, \label{eq:FVLunit}\\
  \frac{d}{d\varepsilon}B[\gamma_\varepsilon]\Big|_{\varepsilon=0} &= \int_I \left(2\langle\gamma_{ss},\eta_{ss}\rangle-3|\gamma_{ss}|^2\langle\gamma_s,\eta_s\rangle \right)ds. \label{eq:FVBunit}
\end{align}
These expressions are nothing but formulae \eqref{eq:FVLgeneral} and \eqref{eq:FVBgeneral} under the unit-speed assumption $|\gamma'|\equiv1$ and the resulting orthogonality $(\gamma',\gamma'')=(\frac{1}{2}|\gamma'|^2)'\equiv0$.

Noting that the standard integration-by-parts procedure works for $ds$ since
$$\int_a^b f_sgds=\int_a^b f'gdx=[fg]_a^b-\int_a^bfg'dx=[fg]_a^b-\int_a^b fg_sds,$$
we further deduce that for $\eta\in C^\infty(\bar{I};\mathbf{R}^n)$,
\begin{align}
  \frac{d}{d\varepsilon}L[\gamma_\varepsilon]\Big|_{\varepsilon=0} &= -\int_I\langle\gamma_{ss},\eta\rangle ds -[\langle\gamma_s,\eta\rangle]_a^b, \label{eq:FVLintegralbyparts_BC}\\
  \frac{d}{d\varepsilon}B[\gamma_\varepsilon]\Big|_{\varepsilon=0} &= \int_I \big( 2\langle\gamma_{ssss},\eta\rangle+ 6\langle\gamma_{ss},\gamma_{sss}\rangle\langle\gamma_s,\eta\rangle + 3|\gamma_{ss}|^2\langle\gamma_{ss},\eta\rangle \big)ds \label{eq:FVBintegralbyparts_BC}\\
  & \qquad\qquad + [2\langle\gamma_{ss},\eta_s\rangle]_a^b - [\langle2\gamma_{sss}+3|\gamma_{ss}|^2\gamma_s,\eta\rangle]_a^b. \nonumber
\end{align}

In particular, if $\eta\in C^\infty_c(I;\mathbf{R}^n)$, then $\eta$ and $\eta_s$ vanish at the endpoints and hence
\begin{align}
  \frac{d}{d\varepsilon}L[\gamma_\varepsilon]\Big|_{\varepsilon=0} &= -\int_I\langle\gamma_{ss},\eta\rangle ds, \label{eq:FVLintegralbyparts}\\
  \frac{d}{d\varepsilon}B[\gamma_\varepsilon]\Big|_{\varepsilon=0} &= \int_I \big\langle 2\gamma_{ssss}+6\langle\gamma_{ss},\gamma_{sss}\rangle\gamma_{s}+3|\gamma_{ss}|^2\gamma_{ss},\eta \big\rangle ds. \label{eq:FVBintegralbyparts}
\end{align}
By \eqref{eq:FVLintegralbyparts} and \eqref{eq:FVBintegralbyparts}, and by the fundamental lemma of the calculus of variations, we deduce the key ODE of fourth order:
\begin{equation}\label{eq:elastica_ODE_fourthorder}
  2\gamma_{ssss}+ 6\langle\gamma_{ss},\gamma_{sss}\rangle\gamma_{s} + 3|\gamma_{ss}|^2\gamma_{ss} -\lambda \gamma_{ss} = 0.
\end{equation}

\begin{definition}[Elastica]
  A smooth curve $\gamma:\bar{I}\to\mathbf{R}^n$ is called an \emph{elastica} if $\gamma$ solves equation \eqref{eq:elastica_ODE_fourthorder} for some $\lambda\in\mathbf{R}$.
  When we specify the multiplier $\lambda$, it is also referred to as \emph{$\lambda$-elastica}.
\end{definition}

An elastica is also called Euler's elastica, Euler--Bernoulli elastica, elastic curve and so on in the literature.
Throughout this manuscript we mainly use the term {\em elastica} (and {\em elasticae} for plural).
Accordingly, we call \eqref{eq:elastica_ODE_fourthorder} the {\em elastica equation}.

[Note: Euler's original work is written in Latin and he used ``elasticum/elastica''.
However, the wording ``elastica/elasticae'' is commonly used in the recent English literature. This manuscript follows the latter. The author would like to thank Marius M\"uller for pointing out this fact.]

Now we can already deduce that any elastica is analytic by using the Cauchy--Kovalevskaya theorem for the system of ODEs.
We will anyway prove the analyticity by directly showing that all elasticae are represented by (analytic) elliptic integrals and functions.

\subsection{Uniqueness and dimensional rigidity}

Standard uniqueness theory for the elastica equation already implies a strong dimensional rigidity.

\begin{theorem}\label{thm:uniquness_dim_rigidity}
    Let $n\geq2$ and $\gamma:\bar{I}\to\R^n$ be an elastica.
    Let $x_0\in\bar{I}$
    and define
    $$d:=\dim\mathrm{span}\{\gamma_s(x_0),\gamma_{ss}(x_0),\gamma_{sss}(x_0)\}.$$
    Then the image of $\gamma$ is contained in a $d$-dimensional affine subspace.
\end{theorem}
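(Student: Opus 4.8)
The plan is to exploit the fact that the elastica equation \eqref{eq:elastica_ODE_fourthorder} is a (quasilinear) fourth-order ODE, so that a solution $\gamma$ is determined—up to the natural invariances—by its $4$-jet $(\gamma(x_0),\gamma_s(x_0),\gamma_{ss}(x_0),\gamma_{sss}(x_0))$ at one point. The idea is to compare $\gamma$ with the curve obtained by orthogonally projecting $\gamma$ into the affine subspace $V$ through $\gamma(x_0)$ spanned by $\gamma_s(x_0),\gamma_{ss}(x_0),\gamma_{sss}(x_0)$, and to argue that this projection is \emph{also} a solution of the same initial value problem, hence equals $\gamma$.

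First I would set up the following reduction. Let $\Pi:\R^n\to\R^n$ denote orthogonal projection onto the linear subspace $W:=\mathrm{span}\{\gamma_s(x_0),\gamma_{ss}(x_0),\gamma_{sss}(x_0)\}$, and let $\tilde\gamma:=\gamma(x_0)+\Pi(\gamma-\gamma(x_0))$, so that $\tilde\gamma$ takes values in the $d$-dimensional affine subspace $V:=\gamma(x_0)+W$. Since $\Pi$ is linear and commutes with differentiation, we have $\tilde\gamma_s=\Pi\gamma_s$ etc., but one must be careful: $\tilde\gamma$ need not be unit-speed even if $\gamma$ is, so I would rather phrase everything in the reparametrization-invariant form of the elastica equation, or—cleaner—work with the unit-tangent and observe that \eqref{eq:elastica_ODE_fourthorder} is a system to which Picard–Lindelöf applies after rewriting as a first-order system in $(\gamma,\gamma_s,\gamma_{ss},\gamma_{sss})$, with the right-hand side smooth on the open set $\{|\gamma_s|>0\}$. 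The key point is then that $\Pi$ maps solutions to solutions: applying $\Pi$ to \eqref{eq:elastica_ODE_fourthorder} and using that $\Pi$ is an orthogonal projection (so $\langle\Pi a,\Pi b\rangle$ relates to $\langle a,b\rangle$ only when one factor already lies in $W$), one sees that the projected curve solves the elastica equation \emph{provided} the relevant derivatives of $\gamma$ already lie in $W$ along the whole curve—which is what we are trying to prove, so this is circular unless handled via uniqueness at $x_0$ alone.

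Therefore the honest argument I would run is: (1) Show that the affine span $W$ is an \emph{invariant subspace} for the flow in the following infinitesimal sense—if $\gamma_s,\gamma_{ss},\gamma_{sss}\in W$ at a point, then $\gamma_{ssss}\in W$ there, because \eqref{eq:elastica_ODE_fourthorder} expresses $\gamma_{ssss}$ as a linear combination of $\gamma_s$ and $\gamma_{ss}$ (the coefficients $6\langle\gamma_{ss},\gamma_{sss}\rangle$, $3|\gamma_{ss}|^2$, $\lambda$ are scalars), both of which lie in $W$. (2) Consequently the affine subspace $V=\gamma(x_0)+W$ is invariant: by Picard–Lindelöf, the solution of the first-order system with initial data in the closed subset $\{(\gamma,T_1,T_2,T_3):\gamma-\gamma(x_0)\in W,\ T_i\in W\}$ stays in that subset, since step (1) shows this subset is (locally, hence globally on $\bar I$ by connectedness) forward- and backward-invariant under the vector field. (3) In particular $\gamma(x)-\gamma(x_0)\in W$ for all $x\in\bar I$, which is the claim. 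The main obstacle is step (2): one has to phrase the elastica equation as a genuine first-order ODE system on an open subset of $(\R^n)^4$ and verify that the subset $\{\gamma-\gamma(x_0)\in W,\ \gamma_s,\gamma_{ss},\gamma_{sss}\in W\}$ is invariant under its flow; this is where Picard–Lindelöf (uniqueness for the restricted system inside $V$, which embeds into the ambient system) does the real work, and one should check the non-degeneracy $|\gamma_s|>0$ is preserved so that the vector field stays smooth. Everything else is linear algebra and the observation that \eqref{eq:elastica_ODE_fourthorder} is affine in the highest derivative with scalar coefficients.
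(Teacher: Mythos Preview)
Your final argument (steps (1)--(3)) is correct and is essentially the same as the paper's proof: both rest on the observation that equation \eqref{eq:elastica_ODE_fourthorder} expresses $\gamma_{ssss}$ as a scalar linear combination of $\gamma_s$ and $\gamma_{ss}$, so the first-order system in $(\gamma,\gamma_s,\gamma_{ss},\gamma_{sss})$ is tangent to the affine subspace determined by the initial data, and Picard--Lindel\"of uniqueness forces the solution to stay there. The paper simply makes your step (2) concrete by first passing to the arclength parametrization and then choosing an isometry $\Phi$ that sends $W$ to $\R^d\times\{0\}$, after which the restricted $d$-dimensional solution and the ambient $n$-dimensional solution share initial data and hence coincide.

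Two minor cleanups for your write-up: (i) you should pass to the arclength parametrization at the outset so that \eqref{eq:elastica_ODE_fourthorder} becomes an honest ODE in the independent variable (otherwise the $s$-derivatives hide a dependence on $|\gamma'|$); once you do, $|\gamma_s|\equiv1$ automatically and your non-degeneracy worry disappears. (ii) The detour through the orthogonal projection $\Pi$ at the start is indeed circular, as you noticed, and can be dropped entirely---the invariance argument stands on its own.
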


\begin{proof}
    Let $\tilde{\gamma}:[0,L]\to\R^n$ be the arclength parametrization of $\gamma$, and let $s_0:=\int_a^{x_0}|\gamma'|$, where we write $I=(a,b)$.
    Then
    $$d=\dim\mathrm{span}\{\tilde{\gamma}'(s_0),\tilde{\gamma}''(s_0),\tilde{\gamma}'''(s_0)\},$$
    and $\tilde{\gamma}$ solves \eqref{eq:elastica_ODE_fourthorder} with the standard derivatives, that is, for some $\lambda\in\R$,
    \begin{equation}\label{eq:elastica_ODE_arclength}
        2\tilde{\gamma}'''' + 6\langle\tilde{\gamma}'',\tilde{\gamma}'''\rangle\tilde{\gamma}' + 3|\tilde{\gamma}''|^2\tilde{\gamma}''-\lambda\tilde{\gamma}'' = 0.
    \end{equation}
    Now we can choose an isometry $\Phi:\R^n\to\R^n$, which is necessarily affine, such that the arclength parametrized curve 
    $$\xi:=\Phi\circ\tilde{\gamma}:[0,L]\to\R^n$$ 
    solves the same equation as \eqref{eq:elastica_ODE_arclength} with the initial condition
    \begin{equation*}
        \text{$\xi(s_0)=0$ and $\xi'(s_0),\xi''(s_0),\xi'''(s_0)\in\R^d\times\{0\}\subset\R^n$.}
    \end{equation*}
    On the other hand, by the Picard--Lindel\"of theorem, there exists a unique $d$-dimensional solution
    $$\zeta:[0,L]\to\R^d$$
    to equation \eqref{eq:elastica_ODE_arclength} with the initial condition
    $$\text{$\zeta^{(i)}(s_0)=\iota^{-1}\circ\xi^{(i)}(s_0)$ for $i=0,1,2,3$,}$$
    where $\iota:\R^d\to\R^d\times\{0\}\subset\R^n$ denotes the canonical injection.
    This map defines an $n$-dimensional solution $\iota\circ\zeta:[0,L]\to\R^n$ to equation \eqref{eq:elastica_ODE_arclength} with the same initial condition as $\xi$.
    By uniqueness, we have
    $$\iota\circ\zeta=\xi,$$
    and hence $\tilde{\gamma}=\Phi^{-1}\circ\iota\circ\zeta$, which means that the assertion holds true.
\end{proof}

In particular, we have the following

\begin{corollary}\label{cor:uniqueness_dim_rigidity}
  Let $\gamma:\bar{I}\to\mathbf{R}^n$ be an elastica.
  Then the following assertions hold.
  \begin{enumerate}
    \item The map $\dim\mathrm{span}\{\gamma_s,\gamma_{ss},\gamma_{sss}\}:\bar{I}\to\{1,2,3\}$ is constant.
    \item If $d$ denotes the above constant value, then the image of $\gamma$ is contained in a $d$-dimensional affine subspace.
    \item If $\kappa=0$ at some point, then $\gamma$ is at most two-dimensional.
  \end{enumerate}
\end{corollary}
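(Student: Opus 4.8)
The plan is to derive Corollary \ref{cor:uniqueness_dim_rigidity} as a direct consequence of Theorem \ref{thm:uniquness_dim_rigidity}. For assertion (ii), once (i) is known, it is immediate: pick any point $x_0\in\bar I$, apply Theorem \ref{thm:uniquness_dim_rigidity} at $x_0$ to conclude that the image of $\gamma$ lies in a $d$-dimensional affine subspace where $d$ is the (now constant) value of $\dim\mathrm{span}\{\gamma_s,\gamma_{ss},\gamma_{sss}\}$. So the crux is (i).

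For (i), I would argue as follows. First note that since $\gamma$ is an immersed curve, $\gamma_s$ never vanishes, so the function $x\mapsto \dim\mathrm{span}\{\gamma_s(x),\gamma_{ss}(x),\gamma_{sss}(x)\}$ takes values in $\{1,2,3\}$; call it $d(x)$. Theorem \ref{thm:uniquness_dim_rigidity} tells us that for \emph{each} fixed $x_0$, the whole image of $\gamma$ lies in a $d(x_0)$-dimensional affine subspace $V_{x_0}$. After translating so that this subspace is linear, all derivatives $\gamma_s(x),\gamma_{ss}(x),\gamma_{sss}(x)$ for \emph{every} $x$ lie in the linear part of $V_{x_0}$, which has dimension $d(x_0)$; hence $d(x)\le d(x_0)$ for all $x$. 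Since $x_0$ was arbitrary, swapping the roles of $x$ and $x_0$ gives $d(x_0)\le d(x)$ as well, so $d$ is constant. This is clean and the main (only) subtlety is making sure the affine subspace from Theorem \ref{thm:uniquness_dim_rigidity} contains not just $\gamma(\bar I)$ but, after translation, all the higher derivative vectors — which follows because an affine subspace containing a smooth curve contains all of its tangent, second-derivative, etc.\ vectors at every point.

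For assertion (iii), suppose $\kappa(x_0)=0$ at some point $x_0$. In the arclength parametrization $\kappa=\gamma_{ss}$, so $\gamma_{ss}(x_0)=0$, and then $\mathrm{span}\{\gamma_s(x_0),\gamma_{ss}(x_0),\gamma_{sss}(x_0)\}=\mathrm{span}\{\gamma_s(x_0),\gamma_{sss}(x_0)\}$ has dimension at most $2$. By part (ii) (equivalently by Theorem \ref{thm:uniquness_dim_rigidity} applied at $x_0$), the image of $\gamma$ is then contained in an affine subspace of dimension at most $2$.

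I do not anticipate any real obstacle here — the corollary is essentially a repackaging of Theorem \ref{thm:uniquness_dim_rigidity}. The one point that deserves a sentence of care is the symmetry argument in (i): one must observe that the affine hull of the image automatically absorbs all derivative vectors, so that the dimension bound from the theorem propagates to every point, giving both inequalities $d(x)\le d(x_0)$ and $d(x_0)\le d(x)$.
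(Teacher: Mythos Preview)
Your argument is correct and matches the paper's intent: the corollary is stated there without proof as an immediate consequence of Theorem \ref{thm:uniquness_dim_rigidity}, and your symmetry argument for (i) (that the affine hull of the image absorbs all derivative vectors, giving $d(x)\le d(x_0)$ for every pair) is exactly the natural way to unpack it. Nothing to add.
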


% We finally note that, in contrast to uniqueness, existence for the elastica equation is somewhat delicate.
% More precisely, although for given initial data there is a unique local solution to \eqref{eq:elastica_ODE_fourthorder} from the standard ODE theory, it is not clear if such an abstract solution is indeed an elastica since the unit-speed condition is not ensured yet.
% In fact we will obtain a global existence result through more geometric aspects.
% %Furthermore we will obtain certain explicit formulae involving Jacobi elliptic functions and integrals.

Therefore, without loss of generality we may only consider elasticae in $\R^3$.
From now on we say that an elastica is \emph{planar} if the image is contained in a (two-dimensional) plane, and otherwise \emph{non-planar} or \emph{spatial}.

\subsection{Curvature equation in general dimensions}

The fourth order differential system in \eqref{eq:elastica_ODE_fourthorder} is still hard to handle.
We can further reduce it by using geometric notions.
Define the normal derivative $\nabla_s$ of a vector field $\psi$ along $\gamma$ by
$$\nabla_s\psi:=(\ds\psi)^\perp=\ds\psi-\langle\ds\psi,T\rangle T,$$
where $\perp$ means the normal projection.
Then we have

\begin{lemma}[First variation: Geometric form]\label{lem:first_variation_normal}
  For a smooth curve $\gamma:\bar{I}\to\mathbf{R}^n$ with $I=(a,b)$, formulae \eqref{eq:FVLintegralbyparts_BC} and \eqref{eq:FVBintegralbyparts_BC} are also represented as follows:
  \begin{align}
    \frac{d}{d\varepsilon}L[\gamma_\varepsilon]\Big|_{\varepsilon=0} &= -\int_I\langle\kappa,\eta\rangle ds +[\langle T,\eta\rangle]_a^b, \label{eq:FVLintegralbyparts_BC_curvature}\\
    \frac{d}{d\varepsilon}B[\gamma_\varepsilon]\Big|_{\varepsilon=0} &= \int_I \langle2\nabla_s^2\kappa + |\kappa|^2\kappa,\eta\rangle ds + [2\langle\kappa,\eta'\rangle]_a^b - [\langle2\partial_s\kappa+3|\kappa|^2T,\eta\rangle]_a^b. \label{eq:FVBintegralbyparts_BC_curvature}
  \end{align}
\end{lemma}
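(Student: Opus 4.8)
The plan is to rewrite the already-derived boundary-value formulae \eqref{eq:FVLintegralbyparts_BC} and \eqref{eq:FVBintegralbyparts_BC} purely in terms of the geometric quantities $T=\gamma_s$ and $\kappa=\gamma_{ss}$. Since $\gamma_s=T$, $\gamma_{ss}=\kappa$, $\gamma_{sss}=\partial_s\kappa$ and $\gamma_{ssss}=\partial_s^2\kappa$ hold identically by the very definitions of $T$, $\kappa$ and of the arclength derivative $\partial_s$, no reparametrization argument is needed. With these substitutions \eqref{eq:FVLintegralbyparts_BC} turns into \eqref{eq:FVLintegralbyparts_BC_curvature} immediately, and the two boundary terms in \eqref{eq:FVBintegralbyparts_BC} turn into those of \eqref{eq:FVBintegralbyparts_BC_curvature} upon replacing $\gamma_{ss}$ by $\kappa$, $\gamma_{sss}$ by $\partial_s\kappa$, $\gamma_s$ by $T$ and $|\gamma_{ss}|^2$ by $|\kappa|^2$. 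Thus everything reduces to rewriting the interior integrand.

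So the substantive step is the pointwise identity
\begin{equation*}
2\partial_s^2\kappa+6\langle\kappa,\partial_s\kappa\rangle T+3|\kappa|^2\kappa=2\nabla_s^2\kappa+|\kappa|^2\kappa.
\end{equation*}
To prove it I would first record the orthogonality $\langle T,\kappa\rangle=\langle T,\partial_sT\rangle=\tfrac{1}{2}\partial_s|T|^2=0$, which in particular gives $\langle\kappa,\partial_s\kappa\rangle=\tfrac{1}{2}\partial_s|\kappa|^2$. Next, using the decomposition $\partial_s\psi=\nabla_s\psi+\langle\partial_s\psi,T\rangle T$ of the arclength derivative of a vector field along $\gamma$, I would extract the tangential parts: from $\langle\partial_s\kappa,T\rangle=\partial_s\langle\kappa,T\rangle-\langle\kappa,\partial_sT\rangle=-|\kappa|^2$ one obtains $\partial_s\kappa=\nabla_s\kappa-|\kappa|^2T$; and from $\langle\partial_s\nabla_s\kappa,T\rangle=-\langle\nabla_s\kappa,\partial_sT\rangle=-\langle\nabla_s\kappa,\kappa\rangle=-\tfrac{1}{2}\partial_s|\kappa|^2$ one obtains $\partial_s\nabla_s\kappa=\nabla_s^2\kappa-\tfrac{1}{2}(\partial_s|\kappa|^2)T$. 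Differentiating $\partial_s\kappa=\nabla_s\kappa-|\kappa|^2T$ once more and inserting the last relation then gives $\partial_s^2\kappa=\nabla_s^2\kappa-\tfrac{3}{2}(\partial_s|\kappa|^2)T-|\kappa|^2\kappa$.

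Finally I would substitute this expression for $\partial_s^2\kappa$, together with $\langle\kappa,\partial_s\kappa\rangle=\tfrac{1}{2}\partial_s|\kappa|^2$, into the left-hand side above: the tangential terms $-3(\partial_s|\kappa|^2)T$ and $+3(\partial_s|\kappa|^2)T$ cancel, the scalar multiples of $\kappa$ combine as $-2|\kappa|^2\kappa+3|\kappa|^2\kappa=|\kappa|^2\kappa$, and what remains is exactly $2\nabla_s^2\kappa+|\kappa|^2\kappa$; inserting this into \eqref{eq:FVBintegralbyparts_BC} produces \eqref{eq:FVBintegralbyparts_BC_curvature}. I expect the only real obstacle to be bookkeeping — keeping the signs and coefficients of the purely tangential, $T$-directed contributions straight so that they cancel — since conceptually the argument is nothing more than repeated use of the product rule together with $\langle T,\kappa\rangle=0$ and $\partial_sT=\kappa$.
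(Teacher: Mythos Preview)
Your proof is correct and follows essentially the same approach as the paper: both arguments reduce to verifying the pointwise identity $2\gamma_{ssss}+6\langle\gamma_{ss},\gamma_{sss}\rangle\gamma_s+3|\gamma_{ss}|^2\gamma_{ss}=2\nabla_s^2\kappa+|\kappa|^2\kappa$ using the orthogonality relations obtained by differentiating $\langle\gamma_s,\gamma_{ss}\rangle=0$. The only organizational difference is direction---the paper expands $\nabla_s^2\kappa$ directly in terms of $\gamma_{ssss}$ and then checks that the remainder matches, while you iterate the tangential--normal decomposition $\partial_s\psi=\nabla_s\psi+\langle\partial_s\psi,T\rangle T$ to express $\partial_s^2\kappa$ in terms of $\nabla_s^2\kappa$; the underlying computation is the same.
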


\begin{proof}
  We only address the second integral as it is the only nontrivial part.
  By definition $\kappa=\gamma_{ss}$ and direct computations we obtain
  \begin{align*}
    2\nabla_s^2\kappa+|\kappa|^2\kappa
    = 2\gamma_{ssss}-2\langle\gamma_{ssss},\gamma_s\rangle\gamma_s-2\langle\gamma_{sss},\gamma_s\rangle\gamma_{ss}+|\gamma_{ss}|^2\gamma_s,
  \end{align*}
  and hence it is now sufficient to check that
  $$-2\langle\gamma_{ssss},\gamma_s\rangle\gamma_s-2\langle\gamma_{sss},\gamma_s\rangle\gamma_{ss} = 6\langle\gamma_{ss},\gamma_{sss}\rangle\gamma_s + 2|\gamma_{ss}|^2\gamma_{ss}. $$
  This follows since the orthogonality of $\gamma_s$ and $\gamma_{ss}$ implies $6\langle\gamma_{ss},\gamma_{sss}\rangle
  +2\langle\gamma_{ssss},\gamma_s\rangle=2\langle\gamma_s,\gamma_{ss}\rangle_{ss}=0$ and $2|\gamma_{ss}|^2+2\langle\gamma_{sss},\gamma_s\rangle=2\langle\gamma_{s},\gamma_{ss}\rangle_s=0$.
\end{proof}

\begin{remark}[Normal derivative]
    The readers who are not familiar with geometric variational problems may wonder why it is important to introduce the normal derivative.
    This idea works well because the first variations only depend on the normal variations, except at the endpoints.
    This is a common feature in geometric variational problems due to invariance with respect to automorphisms.
    Indeed, in our case, the functionals under consideration are invariant along any (nonlinear) smooth perturbation of the form $\gamma_\varepsilon=\gamma\circ\xi_\varepsilon$, where $\xi_\varepsilon$ denotes a family of reparametrizations.
    If we consider $\xi_\varepsilon=s+\varepsilon\zeta+o(\varepsilon)$ for $\zeta\in C^\infty_c(I)$, then the perturbation can be expanded as $\gamma_\varepsilon=\gamma+\varepsilon\zeta\gamma'+o(\varepsilon)$, and hence $\frac{d}{d\varepsilon}\big|_{\varepsilon=0}L[\gamma+\varepsilon\zeta\gamma']=\frac{d}{d\varepsilon}\big|_{\varepsilon=0}L[\gamma_\varepsilon]=0$; the same is true for $B$.
    The arbitrariness of $\zeta$ implies that the first variations must vanish in the tangent direction $\gamma'$.
\end{remark}

Lemma \ref{lem:first_variation_normal} immediately implies an alternative form of the elastica equation:

\begin{proposition}[Elastica equation in terms of curvature vector]
  Let $\lambda\in\R$.
  A smooth curve $\gamma:\bar{I}\to\mathbf{R}^n$ is a $\lambda$-elastica if and only if it solves
  \begin{equation}\label{eq:elastica_ODE_general_dim}
    2\nabla_s^2\kappa + |\kappa|^2\kappa -\lambda\kappa =0.
  \end{equation}
\end{proposition}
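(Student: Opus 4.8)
The plan is to show that equation \eqref{eq:elastica_ODE_fourthorder} and equation \eqref{eq:elastica_ODE_general_dim} are pointwise equivalent for any smooth curve, using only the identity already established in the proof of Lemma~\ref{lem:first_variation_normal}. Recall that lemma gives the algebraic identity
\[
2\nabla_s^2\kappa+|\kappa|^2\kappa = 2\gamma_{ssss}+6\langle\gamma_{ss},\gamma_{sss}\rangle\gamma_s+3|\gamma_{ss}|^2\gamma_{ss}-2|\gamma_{ss}|^2\gamma_{ss}+|\gamma_{ss}|^2\gamma_s,
\]
which we should recognize, after comparison with the displayed computation in that proof, as exactly matching the left-hand side of \eqref{eq:elastica_ODE_fourthorder} except that the term $3|\kappa|^2\kappa$ there becomes $|\kappa|^2\kappa$ here. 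More precisely, I would first record the clean identity, valid for all smooth $\gamma$,
\[
2\nabla_s^2\kappa+|\kappa|^2\kappa = 2\gamma_{ssss}+6\langle\gamma_{ss},\gamma_{sss}\rangle\gamma_s+3|\gamma_{ss}|^2\gamma_{ss} - 2|\kappa|^2\kappa,
\]
equivalently
\[
2\nabla_s^2\kappa+3|\kappa|^2\kappa = 2\gamma_{ssss}+6\langle\gamma_{ss},\gamma_{sss}\rangle\gamma_s+3|\gamma_{ss}|^2\gamma_{ss},
\]
which is what the proof of Lemma~\ref{lem:first_variation_normal} actually verifies (it checks $2\nabla_s^2\kappa+|\kappa|^2\kappa - (\text{RHS of \eqref{eq:elastica_ODE_fourthorder} sans }\lambda\kappa) = -2|\kappa|^2\kappa$, so moving that term over gives the coefficient $3$).

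Given this identity, the proof is a one-line substitution. If $\gamma$ is a $\lambda$-elastica, then by definition the left side of \eqref{eq:elastica_ODE_fourthorder} vanishes, i.e.\ $2\gamma_{ssss}+6\langle\gamma_{ss},\gamma_{sss}\rangle\gamma_s+3|\gamma_{ss}|^2\gamma_{ss}=\lambda\gamma_{ss}=\lambda\kappa$; combined with the identity $2\nabla_s^2\kappa+3|\kappa|^2\kappa$ equals that same quantity, we get $2\nabla_s^2\kappa+3|\kappa|^2\kappa=\lambda\kappa$, hence $2\nabla_s^2\kappa+|\kappa|^2\kappa-\lambda\kappa = -2|\kappa|^2\kappa+ (2\nabla_s^2\kappa+3|\kappa|^2\kappa-\lambda\kappa) $... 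I will instead simply note both sides equal $\lambda\kappa$ under the identity, giving \eqref{eq:elastica_ODE_general_dim}. Conversely, if \eqref{eq:elastica_ODE_general_dim} holds, then $2\nabla_s^2\kappa+|\kappa|^2\kappa=\lambda\kappa$, so adding $2|\kappa|^2\kappa$ to both sides and applying the identity gives $2\gamma_{ssss}+6\langle\gamma_{ss},\gamma_{sss}\rangle\gamma_s+3|\gamma_{ss}|^2\gamma_{ss}=\lambda\kappa+2|\kappa|^2\kappa$. That is not yet \eqref{eq:elastica_ODE_fourthorder}, so here is the correct bookkeeping: the identity to use is $2\nabla_s^2\kappa+3|\kappa|^2\kappa=2\gamma_{ssss}+6\langle\gamma_{ss},\gamma_{sss}\rangle\gamma_s+3|\gamma_{ss}|^2\gamma_{ss}$, and \eqref{eq:elastica_ODE_general_dim} rearranges to $2\nabla_s^2\kappa+3|\kappa|^2\kappa=\lambda\kappa+2|\kappa|^2\kappa$, which upon substitution is not $\lambda\kappa$ either. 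I therefore expect the actual content is that \emph{both} \eqref{eq:elastica_ODE_fourthorder} and \eqref{eq:elastica_ODE_general_dim} are equivalent to $2\gamma_{ssss}+6\langle\gamma_{ss},\gamma_{sss}\rangle\gamma_s+3|\gamma_{ss}|^2\gamma_{ss}-\lambda\gamma_{ss}=0$ once one notes $\nabla_s^2\kappa$ and $\gamma_{ssss}$ differ by tangential/curvature correction terms that are absorbed exactly into the $|\kappa|^2\kappa$ coefficient discrepancy; I would re-derive the precise identity carefully at the start rather than trusting the sign above.

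The cleanest route, which I would actually write up, is to appeal directly to Lemma~\ref{lem:first_variation_normal}: comparing the boundary-free interior integrands in \eqref{eq:FVBintegralbyparts} and \eqref{eq:FVBintegralbyparts_BC_curvature}, we read off the pointwise identity
\[
2\gamma_{ssss}+6\langle\gamma_{ss},\gamma_{sss}\rangle\gamma_s+3|\gamma_{ss}|^2\gamma_{ss}=2\nabla_s^2\kappa+|\kappa|^2\kappa
\]
for every smooth $\gamma$ (both expressions are, by those two formulas, the $L^2(ds)$ gradient of $B$, and since $\eta\in C_c^\infty(I;\R^n)$ is arbitrary the fundamental lemma forces the integrands to agree). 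Subtracting $\lambda\gamma_{ss}=\lambda\kappa$ from both sides then shows \eqref{eq:elastica_ODE_fourthorder} and \eqref{eq:elastica_ODE_general_dim} coincide termwise, proving the equivalence. The only mild obstacle is that \eqref{eq:FVBintegralbyparts} is stated for $\eta\in C_c^\infty(I;\R^n)$ whereas Lemma~\ref{lem:first_variation_normal} is stated with boundary terms for $\eta\in C^\infty(\bar I;\R^n)$; restricting the latter to compactly supported $\eta$ kills the boundary terms and makes the comparison legitimate, after which the fundamental lemma of the calculus of variations delivers the pointwise identity, and hence the proposition, immediately.
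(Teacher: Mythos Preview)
Your final paragraph is correct and is exactly the paper's argument: Lemma~\ref{lem:first_variation_normal} (or rather its proof) establishes the pointwise identity
\[
2\nabla_s^2\kappa+|\kappa|^2\kappa \;=\; 2\gamma_{ssss}+6\langle\gamma_{ss},\gamma_{sss}\rangle\gamma_s+3|\gamma_{ss}|^2\gamma_{ss},
\]
and subtracting $\lambda\kappa=\lambda\gamma_{ss}$ from both sides makes \eqref{eq:elastica_ODE_fourthorder} and \eqref{eq:elastica_ODE_general_dim} literally the same equation. The paper states this in one sentence.

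However, the exploratory middle of your proposal is wrong and should be deleted. The identity with coefficient $3$ on the left, namely $2\nabla_s^2\kappa+3|\kappa|^2\kappa = 2\gamma_{ssss}+6\langle\gamma_{ss},\gamma_{sss}\rangle\gamma_s+3|\gamma_{ss}|^2\gamma_{ss}$, is false; the correct coefficient is $1$, as the proof of Lemma~\ref{lem:first_variation_normal} shows (the substitution $-2\langle\gamma_{ssss},\gamma_s\rangle\gamma_s-2\langle\gamma_{sss},\gamma_s\rangle\gamma_{ss}=6\langle\gamma_{ss},\gamma_{sss}\rangle\gamma_s+2|\gamma_{ss}|^2\gamma_{ss}$ adds $2|\gamma_{ss}|^2\gamma_{ss}$ to the existing $|\gamma_{ss}|^2\gamma_{ss}$, producing the $3$ on the \emph{right}, not the left). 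All the subsequent bookkeeping trouble you encounter stems from this error.

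Also, you do not need the fundamental lemma of the calculus of variations here: the proof of Lemma~\ref{lem:first_variation_normal} already verifies the identity pointwise by direct computation, so you can cite that directly rather than matching integrands.
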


We now turn to solving this equation for $n=2,3$.
The main technical difference between $n=2$ and $n=3$ is that any smooth planar curve admits the canonical orthonormal frame, but this is not the case for a spatial curve.

\subsection{Curvature equation for planar elasticae}

For a planar curve $\gamma:\bar{I}\to\mathbf{R}^2$ we can always take the orthonormal frame $\{T,N\}$ by
$$T:=\gamma_s, \quad N:=R_{\pi/2}T,$$
where $R_{\pi/2}$ denotes the counterclockwise rotation matrix through angle $\pi/2$.
Under this choice of $N$ we can uniquely define the \emph{signed curvature}
$$k:=\langle\gamma_{ss},N\rangle.$$
Then, since $\gamma_{ss}=\kappa$ is parallel to $N$, that is, $\langle\gamma_{ss},\gamma_s\rangle=0$,
\begin{equation}\label{eq:planar''}
  \gamma_{ss}=\kappa=kN.
\end{equation}
Also, since $N_s=R_{\pi/2}T_s=kR_{\pi/2}N=-kT$, we find the relation
\begin{equation}\label{eq:planar'''}
  \gamma_{sss}=k_sN-k^2T,
\end{equation}
and moreover
\begin{align}\label{eq:planar_normal}
  \nabla_s\kappa =(\gamma_{sss})^\perp = k_sN, \quad \nabla_s^2\kappa =((k_sN)_s)^\perp = k_{ss}N.
\end{align}
Now the elastica equation \eqref{eq:elastica_ODE_general_dim} can completely be reduced to a scalar ODE:

\begin{proposition}[Planar elastica equation]\label{prop:elastica_ODE_2D}
  A planar curve $\gamma:\bar{I}\to\mathbf{R}^2$ is $\lambda$-elastica if and only if its signed curvature $k:\bar{I}\to\mathbf{R}$ satisfies
  \begin{equation}\label{eq:elastica_ODE_2D}
    2k_{ss}+k^3-\lambda k=0.
  \end{equation}
\end{proposition}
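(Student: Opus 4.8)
The plan is to take Proposition~\ref{prop:elastica_ODE_general_dim} (the curvature form \eqref{eq:elastica_ODE_general_dim} of the elastica equation) and simply contract it against the normal vector $N$ in the planar setting, using the frame computations \eqref{eq:planar''}--\eqref{eq:planar_normal} already established just above. First I would substitute $\kappa=kN$ and $\nabla_s^2\kappa=k_{ss}N$ into $2\nabla_s^2\kappa+|\kappa|^2\kappa-\lambda\kappa$, which instantly yields $(2k_{ss}+k^3-\lambda k)N$; since $\{T,N\}$ is an orthonormal frame, this vector vanishes identically on $\bar I$ if and only if its $N$-component $2k_{ss}+k^3-\lambda k$ vanishes identically. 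That gives the ``only if'' direction for free.

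For the converse, suppose $k:\bar I\to\R$ is the signed curvature of the planar curve $\gamma$ and satisfies \eqref{eq:elastica_ODE_2D}. Running the same frame identities in reverse, $|\kappa|^2=k^2$ and $\nabla_s^2\kappa=k_{ss}N$, so $2\nabla_s^2\kappa+|\kappa|^2\kappa-\lambda\kappa=(2k_{ss}+k^3-\lambda k)N=0$; hence $\gamma$ solves \eqref{eq:elastica_ODE_general_dim}, and by the preceding proposition $\gamma$ is a $\lambda$-elastica. So the logical structure is a clean two-way implication with no extra hypotheses needed beyond planarity, which was already used to define $N$ and $k$.

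The one point that needs a sentence of care — and what I expect to be the only genuine (if minor) obstacle — is justifying $|\kappa|^2=k^2$, i.e.\ that $\kappa$ has \emph{no tangential component} for a curve parametrized by arclength, so that $\kappa=kN$ rather than $\kappa=kN+(\text{something})T$. This is exactly the orthogonality $\langle\gamma_{ss},\gamma_s\rangle=\tfrac12\partial_s|\gamma_s|^2=0$ noted in \eqref{eq:planar''}, valid since we have passed to (or may assume) a unit-speed parametrization; the normalized energies $B$ and $L$ are reparametrization-invariant, so this is no loss of generality. I would state this explicitly once. Beyond that, the proof is a direct substitution and takes only a few lines; no ODE theory, no integration by parts, and no dimension-specific subtlety (the spatial case is harder precisely because a spatial curve lacks a canonical choice of $N$, as the text flags, but that issue does not arise here).
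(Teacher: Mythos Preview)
Your proposal is correct and is essentially identical to the paper's own argument: the paper's proof is a single line citing \eqref{eq:elastica_ODE_general_dim} together with \eqref{eq:planar''} and \eqref{eq:planar_normal}, which is exactly the substitution $\kappa=kN$, $|\kappa|^2=k^2$, $\nabla_s^2\kappa=k_{ss}N$ you describe. Your additional care about the tangential component and the explicit treatment of both implications are already absorbed into those referenced equations, so there is nothing to add.
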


\begin{proof}
  We obtain \eqref{eq:elastica_ODE_2D} by \eqref{eq:elastica_ODE_general_dim} in view of \eqref{eq:planar''} and \eqref{eq:planar_normal}.
\end{proof}

Since a planar curve is determined by the curvature up to an isometry, the solvability of the elastica equation is now essentially reduced to the above scalar second order ODE.

\subsection{Curvature equation for spatial elasticae}

Now we consider a non-planar elastica $\gamma:\bar{I}\to\mathbf{R}^3$.
For the unit tangent $T=\gamma_s$ we have
$$|T_s|=|\kappa|>0 \quad \text{on $\bar{I}$}$$
since otherwise $\gamma$ is contained in a plane by Theorem \ref{thm:uniquness_dim_rigidity}.
In particular, the (non-vanishing) scalar curvature $k$ and the unit normal $N$ are well-defined by 
$$k:=|\kappa|, \quad N:=\frac{\kappa}{|\kappa|}.$$
In addition, the elastica $\gamma$ has non-vanishing torsion
$$t:=\langle N_s,B\rangle\neq0 \quad \text{on $\bar{I}$},$$ 
where $B$ denotes the unit binormal
$$B:=T\times N,$$ 
since again otherwise $\gamma$ is planar by Theorem \ref{thm:uniquness_dim_rigidity}.
Therefore we can associate the standard Frenet--Serret frame $\{T,N,B\}$ to any spatial elastica.
%(Note that in general there may not imply existence of the Frenet frame; for example, consider the planar graph curve of a function $f\in C^\infty([-2,2])$ such that $f(x)>0$ for $|x|>1$ and $f(x)=0$ on $|x|\leq1$ and twisting the straight part.)

Recall the Frenet--Serret formula:
\begin{equation}
  \begin{pmatrix}
    T_s \\ N_s \\ B_s
  \end{pmatrix}
  =
  \begin{pmatrix}
    0 & k & 0\\
    -k & 0 & t\\
    0 & -t & 0
  \end{pmatrix}
  \begin{pmatrix}
    T \\ N \\ B
  \end{pmatrix}.
\end{equation}
%More precisely, since the components $T,N,B$ are vectors having three scalar components, respectively, the components $\pm k,\pm t$ in the matrix should be interpreted as $\pm kI,\pm tI$, where $I$ stands for the $3\times 3$ identity matrix, and also $0$ as the zero matrix.
In particular,
\begin{align}
  \gamma_{ss} &=\kappa=kN, \label{eq:spatial''}\\
  \gamma_{sss} &=k_sN-k^2T+ktB. \label{eq:spatial'''}
\end{align}
Computing the normal derivatives, we obtain
\begin{align}
  \nabla_s\kappa &= (\gamma_{sss})^\perp = k_sN + ktB, \nonumber\\
  \nabla_s^2\kappa &= ((k_sN + ktB)_s)^\perp = (k_{ss}-kt^2)N + (2k_st + kt_s)B, \label{eq:spatial_normal}
\end{align}
and thus the following expression equivalent to \eqref{eq:elastica_ODE_general_dim}:
$$(2k_{ss}+k^3-kt^2-\lambda k)N + 2(2k_st+kt_s)B=0.$$
Therefore, there is an undetermined constant $c\neq0$ such that
\begin{align}\label{eq:elastica_ODE_3D}
  \begin{cases}
    2k_{ss}+k^3-\lambda k- 2kt^2 =0,\\
    k^2t = c\neq0.
  \end{cases}
\end{align}
By the second equation it needs to hold that $k>0$ and $t\neq0$ everywhere.
Hence, by deleting $t$ in the first equation by using the second, the system is essentially reduced to a scalar ODE.

\begin{proposition}[Spatial elastica equation]\label{prop:elastica_ODE_3D}
    Let $\gamma:\bar{I}\to\mathbf{R}^3$ be a smooth curve that is not contained in any plane.
    Then $\gamma$ is a $\lambda$-elastica if and only if $\gamma$ has non-vanishing curvature $k>0$ and torsion $t\neq0$ on $\bar{I}$, and there exists $c\in\R\setminus\{0\}$ such that $k$ and $t$ satisfy
    \begin{align}\label{eq:elastica_ODE_3D'}
      \begin{cases}
        2k_{ss}+k^3-\lambda k- \dfrac{2c^2}{k^3} =0,\\
        t = \dfrac{c}{k^2}.
      \end{cases}
    \end{align}
\end{proposition}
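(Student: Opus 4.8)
The plan is to reduce the statement of Proposition \ref{prop:elastica_ODE_3D} to the already-derived system \eqref{eq:elastica_ODE_3D}, which is essentially all the content. First I would observe that one direction is almost immediate: if $\gamma$ is a $\lambda$-elastica in $\R^3$ not contained in any plane, then by Corollary \ref{cor:uniqueness_dim_rigidity}(iii) the curvature $\kappa$ cannot vanish at any point (otherwise $\gamma$ would be at most two-dimensional), so $k=|\kappa|>0$ on $\bar{I}$ and the unit normal $N=\kappa/k$ is well-defined and smooth. Likewise, the Frenet--Serret argument already given shows the torsion $t$ cannot vanish anywhere — if $t(x_0)=0$ one checks the span of $\gamma_s,\gamma_{ss},\gamma_{sss}$ is at most two-dimensional at $x_0$, contradicting non-planarity via Theorem \ref{thm:uniquness_dim_rigidity}. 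With the full Frenet frame $\{T,N,B\}$ in hand, the computations \eqref{eq:spatial''}--\eqref{eq:spatial_normal} plug directly into \eqref{eq:elastica_ODE_general_dim}, and matching the $N$- and $B$-components gives the two scalar equations in \eqref{eq:elastica_ODE_3D}. Setting $c:=k^2t$, the second equation $k^2t=c$ is equivalent to $t=c/k^2$ (legitimate since $k>0$), and substituting $t^2=c^2/k^4$ into the first equation turns $2k_{ss}+k^3-\lambda k-2kt^2=0$ into $2k_{ss}+k^3-\lambda k-2c^2/k^3=0$. Since $t\neq0$ everywhere, $c\neq0$. This establishes the forward implication with the stated system \eqref{eq:elastica_ODE_3D'}.

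For the converse I would argue as follows. Suppose $\gamma:\bar I\to\R^3$ is a smooth curve with $k>0$, $t\neq0$, and \eqref{eq:elastica_ODE_3D'} holding for some $c\neq0$. Since $k=|\kappa|>0$, the curvature vector $\kappa$ never vanishes, so the Frenet frame $\{T,N,B\}$ with $N=\kappa/|\kappa|$ and $B=T\times N$ is well-defined and smooth on $\bar I$, and the Frenet--Serret formulae hold with this $k$ and this torsion $t=\langle N_s,B\rangle$. The curve is not contained in any plane: if it were, its torsion would vanish identically, contradicting $t\neq0$. Now I reverse the computation: from $t=c/k^2$ we get $k^2t=c$, hence $kt_s+2k_st = (k^2t)_s/k = 0$, so $2k_st+kt_s=0$ and the $B$-component of $2\nabla_s^2\kappa+|\kappa|^2\kappa-\lambda\kappa$ (computed via \eqref{eq:spatial_normal}) vanishes. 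From the first equation in \eqref{eq:elastica_ODE_3D'}, after substituting $2c^2/k^3 = 2kt^2$, we get $2k_{ss}+k^3-\lambda k-2kt^2=0$, which is exactly the $N$-component. Hence $2\nabla_s^2\kappa+|\kappa|^2\kappa-\lambda\kappa=0$, i.e. \eqref{eq:elastica_ODE_general_dim} holds, so $\gamma$ is a $\lambda$-elastica by the preceding Proposition.

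I do not expect a genuine obstacle here; the proposition is really a bookkeeping consequence of the spatial Frenet analysis already carried out before equation \eqref{eq:elastica_ODE_3D}. The one point that deserves a sentence of care — and the closest thing to a subtlety — is the logical structure of the equivalence: in the forward direction the non-vanishing of $k$ and $t$ is a \emph{consequence} (of non-planarity via the dimensional rigidity theorem), whereas in the converse direction one \emph{assumes} $k>0$, $t\neq0$ and must then \emph{deduce} non-planarity from $t\neq0$. I would make sure the statement is phrased so that ``$\gamma$ is a $\lambda$-elastica'' on the left is being compared with the conjunction ``$k>0$, $t\neq0$, and \eqref{eq:elastica_ODE_3D'}'' on the right, both under the standing hypothesis that $\gamma$ is smooth and not contained in any plane; then the two implications above close the loop. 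Finally I would note in passing that this reduces the solvability of the spatial elastica equation to the scalar second-order ODE $2k_{ss}+k^3-\lambda k-2c^2/k^3=0$ for the curvature, parallel to the planar case \eqref{eq:elastica_ODE_2D}, with the torsion then recovered algebraically from $t=c/k^2$.
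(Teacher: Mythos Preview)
Your proposal is correct and follows essentially the same route as the paper: the forward direction is exactly the Frenet--Serret derivation the paper carries out in the paragraphs preceding the proposition (non-vanishing of $k$ and $t$ via dimensional rigidity, then matching the $N$- and $B$-components of \eqref{eq:elastica_ODE_general_dim}), and your explicit converse---reversing the computation to verify \eqref{eq:elastica_ODE_general_dim} from \eqref{eq:elastica_ODE_3D'}---is a clean completion that the paper leaves implicit. Your remark that the non-planarity check in the converse is redundant under the standing hypothesis is also accurate.
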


% \begin{proof}
%   Compare \eqref{eq:elastica_ODE_general_dim} and \eqref{eq:elastica_ODE_3D'} through \eqref{eq:spatial''}, \eqref{eq:spatial_normal}, and \eqref{eq:elastica_ODE_3D}.
% \end{proof}

Once the curvature and the torsion are determined, the corresponding arclength parametrized spatial curve is again uniquely determined up to isometry thanks to the Frenet--Serret formula.

\subsection{Explicit formula for curvature}

Now we obtain explicit solutions to the above equations for the curvature $k$.
To this end we need to use Jacobi elliptic integrals and functions.
The definitions are given in Appendix \ref{sec:elliptic}, where one can also find fundamental properties necessary for reading this manuscript.
Here we only emphasize that throughout this paper we will use the parameter $m\in[0,1]$ instead of the elliptic modulus $\sqrt{m}$.

Since equation \eqref{eq:elastica_ODE_2D} can formally be regarded as the case $c=0$ for the first equation in \eqref{eq:elastica_ODE_3D'}, we may focus on the equation of the form
$$2k''+k^3-\lambda k- \dfrac{2c^2}{k^3} =0.$$
Multiplying $k'$ and integrating the equation yield that for some constant $A$,
$$ (k')^2 + \frac{1}{4}k^4 - \frac{\lambda}{2} k^2 + \frac{c^2}{k^2} = A.$$
Multiplying $4k^2$ and letting $u:=k^2$, we obtain
\begin{equation}\label{eq:u=k^2_ODE}
 (u')^2 = - u^3 + 2\lambda u^2 + 4Au - 4c^2 =: P(u).
\end{equation}
This means that in both planar and spatial cases the squared curvature satisfies the same form of ODE with the cubic polynomial $P$, which is classically known to be solvable by elliptic functions.

The following fact is exactly what we need.

\begin{lemma}[Elliptic function and polynomial ODE]\label{lem:polynomial}
  Let $P$ be a cubic polynomial with real roots $\alpha_1\leq 0\leq\alpha_2<\alpha_3$ of the form $$P(x)=(\alpha_1-x)(\alpha_2-x)(\alpha_3-x).$$
  Let $u:\bar{I}\to[\alpha_2,\alpha_3]$ be a $C^2$ solution to the ODE:
  \begin{equation}\label{eq:ODE_3rd_polynomial}
    (u')^2=P(u).
  \end{equation}
  Then either $u$ is constant, namely $u\equiv\alpha_2$ or $u\equiv\alpha_3$, or there is $s_0\in\mathbf{R}$ such that
  \begin{align}\nonumber
    u(s) =\alpha_3-(\alpha_3-\alpha_2)\sn^2\left(\frac{\sqrt{\alpha_3-\alpha_1}}{2}s+s_0 , \frac{\alpha_3-\alpha_2}{\alpha_3-\alpha_1} \right).
  \end{align}
\end{lemma}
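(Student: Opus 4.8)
The plan is to turn the first-order ODE \eqref{eq:ODE_3rd_polynomial} into an autonomous second-order ODE and then invoke uniqueness for the Cauchy problem, after checking that the claimed $\sn^2$-profile is itself a solution realizing arbitrary admissible initial data. First I would record the elementary facts: on $[\alpha_2,\alpha_3]$ one has $P\geq0$, since there $\alpha_1-x\leq0$, $\alpha_2-x\leq0$ and $\alpha_3-x\geq0$, so \eqref{eq:ODE_3rd_polynomial} is consistent; moreover $P'(\alpha_3)<0$, and $P'(\alpha_2)\geq0$ with the latter vanishing exactly when $\alpha_1=\alpha_2$ (hence $\alpha_1=\alpha_2=0$). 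Differentiating \eqref{eq:ODE_3rd_polynomial} once yields $u'\,(2u''-P'(u))=0$ on $\bar I$, so that $2u''=P'(u)$ wherever $u'\neq0$.

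Next I would establish the dichotomy. If $u'\equiv0$, then $u$ is a constant root of $P$ lying in $[\alpha_2,\alpha_3]$, i.e.\ $u\equiv\alpha_2$ or $u\equiv\alpha_3$. If $u$ is non-constant, I claim the second-order equation $2u''=P'(u)$ in fact holds on all of $\bar I$. Indeed, the open set $\{u'\neq0\}$ must be dense: otherwise $u$ is constant, equal to some $c$, on a maximal open subinterval $J\subsetneq\bar I$, and at an endpoint $s_\ast\in\bar I$ of $J$ one would have $u''(s_\ast)=0$ from the flat side but $u''(s_\ast)=\tfrac12P'(c)$ as a limit through $\{u'\neq0\}$ (such limit points exist by maximality of $J$); since $c\in\{\alpha_2,\alpha_3\}$ and $P'(\alpha_3)\neq0$, this forces $c=\alpha_2$ with $P'(\alpha_2)=0$, i.e.\ $\alpha_1=\alpha_2=0$, but then near $u=0$ the equation reads $u'=\pm u\sqrt{\alpha_3-u}$ with locally Lipschitz right-hand side, so $u\equiv0$ by uniqueness, contradicting non-constancy. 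Hence $\{u'\neq0\}$ is dense and, by continuity, $u$ solves $u''=\tfrac12P'(u)$ on $\bar I$ while still obeying the prescribed constraint $(u')^2=P(u)$.

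I would then verify that the candidate
\[
  v(s):=\alpha_3-(\alpha_3-\alpha_2)\sn^2\!\Big(\tfrac{\sqrt{\alpha_3-\alpha_1}}{2}\,s+s_0,\ \tfrac{\alpha_3-\alpha_2}{\alpha_3-\alpha_1}\Big)
\]
satisfies $(v')^2=P(v)$, and therefore also $2v''=P'(v)$: writing $w:=\sn(\cdot,m)$ with $m=\tfrac{\alpha_3-\alpha_2}{\alpha_3-\alpha_1}$ and using $w'=\cn\dn$, $\cn^2=1-w^2$, $\dn^2=1-mw^2$ from Appendix \ref{sec:elliptic}, both sides collapse to $(\alpha_3-\alpha_2)^2\,w^2(1-w^2)\big[(\alpha_3-\alpha_1)-(\alpha_3-\alpha_2)w^2\big]$ once the frequency and $m$ are substituted. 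Since $\sn^2$ ranges over $[0,1]$ attaining both endpoints and $(\sn^2)'$ changes sign over a period, varying $s_0$ already realizes every initial datum $(v(s_\ast),v'(s_\ast))$ with $v(s_\ast)\in[\alpha_2,\alpha_3]$ and $v'(s_\ast)^2=P(v(s_\ast))$; this covers the degenerate case $m=1$, where $\sn(\cdot,1)=\tanh$ and $v=\alpha_3\sech^2$ ranges over $(0,\alpha_3]$. Finally, given a non-constant $u$ and any $s_\ast\in\bar I$, pick $s_0$ so that $v$ agrees with $u$ to first order at $s_\ast$; since $u$ and $v$ both solve the autonomous second-order equation $y''=\tfrac12P'(y)$, whose right-hand side is polynomial and hence locally Lipschitz, the Picard--Lindel\"of theorem gives $u\equiv v$ on the interval $\bar I$, which is the assertion.

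The step I expect to be the main obstacle is the non-constant case of the second paragraph: ruling out a solution that rests on a level $\alpha_2$ or $\alpha_3$ over a subinterval and then leaves it, so that the second-order equation genuinely holds at the turning points $u\in\{\alpha_2,\alpha_3\}$ where $u'=0$. This uses the $C^2$ hypothesis essentially --- a merely $C^1$ solution could be patched from a $\sn^2$-arc and a constant arc sharing the value $\alpha_3$ --- together with the simplicity of the roots of $P$. The remaining ingredients, namely the elliptic-function identities verifying that $v$ solves the ODE and the surjectivity of the translates of $v$ onto admissible data, are routine given Appendix \ref{sec:elliptic}.
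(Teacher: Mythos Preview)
Your proof is correct, and the essential obstacle you identify---ruling out that a non-constant solution rests on the level $\alpha_2$ or $\alpha_3$ over a subinterval, using the $C^2$ hypothesis together with simplicity of the roots (and the Lipschitz argument at $u=0$ when $\alpha_1=\alpha_2=0$)---is exactly the point the paper singles out as ``the only delicate point''.

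Where you differ is in the surrounding machinery. The paper's sketch works directly with the first-order equation $\pm u'=\sqrt{P(u)}$, invoking local uniqueness whenever $u$ lies strictly between $\alpha_2$ and $\alpha_3$, and then argues that the turning points (where $u$ touches $\alpha_2$ or $\alpha_3$) are isolated so that one can glue. You instead differentiate once to obtain the autonomous second-order equation $2u''=P'(u)$, show via a density argument that it holds on all of $\bar I$, and then invoke Picard--Lindel\"of for that second-order system. This buys you something: the right-hand side $\tfrac12 P'$ is polynomial, so uniqueness is global and you never have to track sign changes of $u'$ or patch first-order solutions across turning points. The paper's route is slightly more hands-on but stays closer to the original first-order formulation. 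Both arguments hinge on the same $C^2$ observation, and the paper in fact leaves the details as an exercise, so your write-up is a legitimate completion of what the paper only outlines.
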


The proof is not difficult.
Since the above first-order ODE is locally uniquely solvable, up to the choice of the sign $\pm u'=\sqrt{P(u)}$, whenever $u$ takes values strictly between $\alpha_2$ and $\alpha_3$, the only delicate point is whether the graph of $u$ touches the lines $u=\alpha_2$ and $u=\alpha_3$ in a non-isolated way.
Such a behavior is however prevented by our $C^2$ assumption.
(Note that if $\alpha_1=\alpha_2=0$ the solution $u$ never touches the line $u=0$.) %since $y(s)=\alpha_3\sech^2(\frac{\sqrt{\alpha_3}}{2}x+x_0)$.)
Details are left for the reader's exercise.
See also \cite{Davis1962} for general backgrounds on the relation between polynomial equations and elliptic functions.

We are now in a position to obtain a unified explicit formula for curvature.

\begin{theorem}[Explicit formula for curvature]\label{thm:curvature_formula}
    Let $\lambda\in\R$.
    Let $\gamma:[0,L]\to\R^3$ be an arclength parametrized $\lambda$-elastica.
    Suppose that $\gamma$ is not a straight segment.
    Then there is a unique choice of four parameters $m,w,A,c\in\R$ such that
    $$0\leq m\leq w\leq 1, \quad w>0, \quad A>0,$$
    and such that the squared curvature of $\gamma$ is represented by
    \begin{equation}\label{eq:curvature_formula}
      |\kappa(s)|^2=A^2\left( 1-\frac{m}{w}\sn^2\Big(\frac{A}{2\sqrt{w}}s+s_0 , m \Big) \right)
    \end{equation}
    for some $s_0\in\mathbf{R}$,
    \begin{equation}\label{eq:det_formula}
        \det\big( \gamma_s(s)\ \gamma_{ss}(s)\ \gamma_{sss}(s) \big) = c,
    \end{equation}
    \begin{equation}\label{eq:lambda_formula}
      \lambda = \frac{A^2}{2w}(3w-m-1),
    \end{equation}
    and
    \begin{equation}\label{eq:c_formula}
      4c^2 = \frac{A^6}{w^2}(1-w)(w-m).
    \end{equation}
\end{theorem}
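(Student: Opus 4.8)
The plan is to start from the ODE \eqref{eq:u=k^2_ODE} satisfied by $u=|\kappa|^2$, namely $(u')^2=P(u)$ with $P(u)=-u^3+2\lambda u^2+4Au-4c^2$, and to normalize it into the root form required by Lemma~\ref{lem:polynomial}. Since $\gamma$ is not a straight segment, $u$ is not identically zero; moreover $u\geq0$ forces $P\geq0$ on the range of $u$, so $P$ cannot have a negative leading behavior without at least one nonnegative real root, and a short sign analysis of the cubic (using $P(0)=-4c^2\leq0$ and $P\to-\infty$ as $u\to+\infty$) shows that $P$ has three real roots $\alpha_1\leq\alpha_2\leq\alpha_3$ with $\alpha_1\leq0\leq\alpha_2\leq\alpha_3$ and that $u$ takes values in $[\alpha_2,\alpha_3]$ with $\alpha_3>0$. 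Comparing coefficients of $P(u)=(\alpha_1-u)(\alpha_2-u)(\alpha_3-u)$ gives the three elementary-symmetric identities $\alpha_1+\alpha_2+\alpha_3=2\lambda$, $\alpha_1\alpha_2+\alpha_2\alpha_3+\alpha_3\alpha_1=-4A$ (careful with signs here — one must track the normalization in \eqref{eq:ODE_3rd_polynomial} versus \eqref{eq:u=k^2_ODE}), and $\alpha_1\alpha_2\alpha_3=4c^2$.

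Next I would apply Lemma~\ref{lem:polynomial} to get, in the non-constant case, $u(s)=\alpha_3-(\alpha_3-\alpha_2)\sn^2\!\big(\tfrac{\sqrt{\alpha_3-\alpha_1}}{2}s+s_0,\tfrac{\alpha_3-\alpha_2}{\alpha_3-\alpha_1}\big)$, and then define the parameters
\[
A:=\sqrt{\alpha_3},\qquad w:=\frac{\alpha_3-\alpha_1}{\alpha_3}\cdot(\text{a suitable factor}),\qquad m:=\frac{\alpha_3-\alpha_2}{\alpha_3-\alpha_1},
\]
choosing the precise substitution so that $\alpha_3=A^2$, $\alpha_3-\alpha_1=A^2/w$, and $\alpha_3-\alpha_2=A^2 m/w$; this is forced once one matches the argument $\tfrac{A}{2\sqrt w}$ and the coefficient $m/w$ in \eqref{eq:curvature_formula}. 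Concretely $\alpha_3=A^2$, $\alpha_2=A^2(1-m/w)$, $\alpha_1=A^2(1-1/w)$. With these in hand, the constraints $0\leq m\leq w\leq1$, $w>0$, $A>0$ are exactly equivalent to $\alpha_1\leq0\leq\alpha_2\leq\alpha_3$ together with $\alpha_3>0$, which we have already established; the constant-curvature case $u\equiv\alpha_2$ or $u\equiv\alpha_3$ corresponds to $m=0$ (circle) and is covered by the same formula with $m=0$. Substituting $\alpha_1,\alpha_2,\alpha_3$ into the three symmetric-function identities then yields $2\lambda=\alpha_1+\alpha_2+\alpha_3=A^2(3-(m+1)/w)$, which is \eqref{eq:lambda_formula}, and $4c^2=\alpha_1\alpha_2\alpha_3=A^6(1-1/w)(1-m/w)\cdot(-1)^{?}$ — one must check the sign so that it comes out as $\tfrac{A^6}{w^2}(1-w)(w-m)$, which is \eqref{eq:c_formula}; the middle identity for $A$ is then automatically consistent (it is not an extra constraint but rather determines nothing new, or can be used as a consistency check). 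Finally \eqref{eq:det_formula} is just the definition $c=\det(\gamma_s\ \gamma_{ss}\ \gamma_{sss})$ from the spatial reduction \eqref{eq:spatial'''}, where one reads off $\det(\gamma_s\ \gamma_{ss}\ \gamma_{sss})=\det(T\ kN\ (k_sN-k^2T+ktB))=k^2 t\det(T\ N\ B)=k^2t=c$, consistent with the second line of \eqref{eq:elastica_ODE_3D'}.

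For uniqueness of the four parameters: the function $s\mapsto|\kappa(s)|^2$ determines the polynomial $P$ (its values trace out an interval whose endpoints are $\alpha_2,\alpha_3$, and the period/amplitude of the $\sn^2$ oscillation pins down $\alpha_1$, or equivalently $\lambda$, $A$, $c$ are determined by $\gamma$ via \eqref{eq:u=k^2_ODE} and \eqref{eq:det_formula}), and the map $(m,w,A)\mapsto(\alpha_1,\alpha_2,\alpha_3)$ just displayed is a bijection onto the admissible set of root triples, so $(m,w,A,c)$ is uniquely recovered; one should note that $s_0$ is only determined modulo the period and that the sign of $c$ (hence of the determinant) is genuine data of the oriented curve, not of $|\kappa|^2$ alone. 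The main obstacle I expect is purely bookkeeping: getting all the signs right in the passage between the two normalizations of the cubic — $P(u)=-u^3+\dots$ in \eqref{eq:u=k^2_ODE} versus the monic-with-a-sign form $(\alpha_1-x)(\alpha_2-x)(\alpha_3-x)$ in Lemma~\ref{lem:polynomial} differ by $(-1)^3$, so the symmetric functions acquire sign flips — and then checking that $4c^2=\alpha_1\alpha_2\alpha_3$ really equals $\tfrac{A^6}{w^2}(1-w)(w-m)$ with a nonnegative right-hand side (this uses $w\leq1$ and $m\leq w$, which is where those two inequalities earn their keep). A secondary subtlety is handling the boundary/degenerate cases cleanly ($m=0$: circle/helix; $w=1$: $c=0$, the planar case; $m=w$: also $c=0$, the borderline between planar and truly spatial), all of which should fall out of the same formulas by continuity but deserve an explicit remark.
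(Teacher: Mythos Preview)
Your proposal is correct and follows essentially the same route as the paper: derive the cubic ODE for $u=|\kappa|^2$, locate the roots $\alpha_1\leq0\leq\alpha_2\leq\alpha_3$ by sign analysis, apply Lemma~\ref{lem:polynomial}, and then reparametrize via $\alpha_3=A^2$, $\alpha_2=A^2(1-m/w)$, $\alpha_1=A^2(1-1/w)$. The only slip is the sign you flagged yourself: the constant term gives $\alpha_1\alpha_2\alpha_3=-4c^2$ (not $+4c^2$), and with that correction the product $A^6\cdot\frac{w-1}{w}\cdot\frac{w-m}{w}=-\frac{A^6}{w^2}(1-w)(w-m)$ indeed yields \eqref{eq:c_formula}; the paper also treats the constant-curvature case separately rather than folding it into $m=0$, but this is cosmetic.
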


\begin{remark}\label{rem:determinant_c}
    If $\gamma$ is a planar elastica, then by Corollary \ref{cor:uniqueness_dim_rigidity} 
    \[
    \det\big( \gamma_s(s)\ \gamma_{ss}(s)\ \gamma_{sss}(s) \big) =0,
    \]
    while if $\gamma$ is a spatial elastica, then the torsion $t$ is well-defined and
    \[
    \det\big( \gamma_s(s)\ \gamma_{ss}(s)\ \gamma_{sss}(s) \big) = |\kappa(s)|^2t(s)\equiv\textrm{const}.
    \]
\end{remark}

\begin{proof}[Proof of Theorem \ref{thm:curvature_formula}]
  We first independently address the case that the $\lambda$-elastica $\gamma$ has constant curvature $|\kappa|$; the constant is non-zero by assumption.
  Note first (in general) $c\in\R$ is uniquely determined by \eqref{eq:c_formula} (with Remark \ref{rem:determinant_c}).
  By the constant-curvature assumption (and $A>0$), equation \eqref{eq:curvature_formula} holds if and only if $m=0$.
  Then $A>0$ is uniquely determined by \eqref{eq:curvature_formula}.
  Now $w$ is also uniquely determined by \eqref{eq:lambda_formula}.
  Using the elastica equation we can directly verify that those unique parameters $m,w,A,c$ also satisfy \eqref{eq:c_formula}.

  In what follows we assume that $|\kappa|$ is not constant.
  The squared curvature $u:=|\kappa|^2$ satisfies equation \eqref{eq:u=k^2_ODE} with a cubic polynomial $P$.
  Then $P(0)=-4c^2\leq0$ and $\lim_{x\to\pm\infty}P(x)=\mp\infty$.
  In addition, since $u$ is non-constant, $P(x)>0$ at some $x>0$.
  Therefore, the cubic polynomial $P$ has roots
  $$P(x)=(\alpha_1-x)(\alpha_2-x)(\alpha_3-x)$$
  such that
  $$\alpha_1\leq0\leq\alpha_2<\alpha_3.$$
  Using the relation between roots and coefficients, we have
  \begin{align*}
    2\lambda &= \alpha_1+\alpha_2+\alpha_3,\\
    4A &= -(\alpha_1\alpha_2+\alpha_2\alpha_3+\alpha_3\alpha_1),\\
    -4c^2 &= \alpha_1\alpha_2\alpha_3.
  \end{align*}
  In addition, the function $u=|\kappa|^2$ takes values within $[\alpha_2,\alpha_3]$ by equation \eqref{eq:u=k^2_ODE} together with nonnegativity of $(u')^2$ and $u$.
  Then we can solve the equation in terms of an elliptic function by Lemma \ref{lem:polynomial} by using $\alpha_1,\alpha_2,\alpha_3$.
  Rewriting the constants $\alpha_1,\alpha_2,\alpha_3$ in terms of new (unique) parameters $A>0$ and $0<m\leq w\leq 1$ by
  \begin{align*}
    \alpha_1 = A^2\left( 1-\frac{1}{w} \right), \quad \alpha_2 =A^2\left(1-\frac{m}{w}\right), \quad \alpha_3 = A^2,
  \end{align*}
  we find that there is some $s_0\in\mathbf{R}$ such that $u=|\kappa|^2$ is given by \eqref{eq:curvature_formula} with relations \eqref{eq:lambda_formula} and \eqref{eq:c_formula}.
\end{proof}

Langer--Singer's diagram (\cite[Fig.\ 1]{LS_1984_JLMS}, \cite[Fig.\ 1]{LS_85}, \cite[Figure 5]{Sin}) provides a concise and intuitive classification of all configurations of elasticae in $\R^3$ using the $(m,w)$-coordinate.
[Note: Langer--Singer use the elliptic modulus $p:=\sqrt{m}\in[0,1]$ instead of the parameter $m\in[0,1]$, and also $w^2$ instead of $w$.]

\subsection{Explicit formula for planar elasticae}

With the general formula for curvature at hand, we can integrate and parametrize all elasticae in terms of elliptic integrals and functions.
In this paper we mainly address the planar case for simplicity.

\begin{theorem}\label{thm:curvature_formula_2D}
    Let $\lambda\in\R$.
    Let $\gamma:[0,L]\to\R^2$ be an arclength parametrized planar $\lambda$-elastica.
    Then the signed curvature $k$ is given by either of the following forms:
    \begin{enumerate}
        \item $k(s)=0$,
        \item $k(s)=\pm A\cn(\alpha s+\beta ,m )$ for some $A>0$, $\alpha>0$, $\beta\in\R$, $m\in(0,1)$ such that $A^2=4\alpha^2m$ and $2m\lambda = A^2(2m-1)$,
        \item $k(s)=\pm A\sech(\alpha s+\beta)$ for some $A>0$, $\alpha>0$, $\beta\in\R$ such that $A^2=4\alpha^2=2\lambda$,
        \item $k(s)=\pm A\dn(\alpha s+\beta , m )$ for some $A>0$, $\alpha>0$, $\beta\in\R$, $m\in(0,1)$ such that $A^2=4\alpha^2$ and $2\lambda = A^2(2-m)$,
        \item $k(s)=\pm\sqrt{\lambda}$, where $\lambda>0$.
    \end{enumerate}
\end{theorem}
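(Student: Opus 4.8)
The plan is to read off the full classification from the explicit curvature formula already established in Theorem~\ref{thm:curvature_formula}, viewing the planar curve $\gamma:[0,L]\to\R^2$ as a curve in $\R^3$ through the standard inclusion (the elastica equation does not see the ambient dimension). First I would dispose of the trivial alternative: if $k\equiv0$ then $\gamma$ is a straight segment and we are in case~(i). So assume $\gamma$ is not a straight segment and apply Theorem~\ref{thm:curvature_formula}: there exist parameters $m,w,A,c$ with $0\le m\le w\le1$, $w>0$, $A>0$, and the representation \eqref{eq:curvature_formula} for $|\kappa|^2$, together with \eqref{eq:lambda_formula} and \eqref{eq:c_formula}. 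Since $\gamma$ is planar, Remark~\ref{rem:determinant_c} gives $\det(\gamma_s\ \gamma_{ss}\ \gamma_{sss})=0$, hence $c=0$ by \eqref{eq:det_formula}, and then \eqref{eq:c_formula} forces $(1-w)(w-m)=0$, i.e.\ $w=m$ or $w=1$.

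In the subcase $w=m$ (so that $m=w\in(0,1]$), the identity $1-\sn^2=\cn^2$ turns \eqref{eq:curvature_formula} into $|\kappa|^2=A^2\cn^2(\alpha s+\beta,m)$ with $\alpha=A/(2\sqrt m)>0$ and $\beta=s_0$, whence $k=\pm A\cn(\alpha s+\beta,m)$, $A^2=4\alpha^2m$, and \eqref{eq:lambda_formula} becomes $2m\lambda=A^2(2m-1)$: this is case~(ii) when $m<1$, and when $m=1$ the identity $\cn(\,\cdot\,,1)=\sech$ yields case~(iii) (with $A^2=4\alpha^2=2\lambda$). In the subcase $w=1$, which we may take with $m<1$ since $m=w=1$ was just covered, the identity $1-m\sn^2=\dn^2$ turns \eqref{eq:curvature_formula} into $|\kappa|^2=A^2\dn^2(\alpha s+\beta,m)$ with $\alpha=A/2>0$, whence $k=\pm A\dn(\alpha s+\beta,m)$, $A^2=4\alpha^2$, and \eqref{eq:lambda_formula} becomes $2\lambda=A^2(2-m)$: this is case~(iv) when $m>0$, while when $m=0$ the identity $\dn(\,\cdot\,,0)\equiv1$ gives the constant curvature $k=\pm A$ with $\lambda=A^2>0$, i.e.\ case~(v). (The constant-curvature case could alternatively be read off directly from \eqref{eq:elastica_ODE_2D}, which forces $k^2=\lambda$.)

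Two routine points remain. First, passing from the scalar identity $k^2=|\kappa|^2=A^2f^2$ (with $f\in\{\cn,\sech,\dn\}$) to the signed statement $k=\pm Af$ requires that $k$ follow a single sign branch; since $\sech$ and $\dn$ are everywhere positive this is automatic, and for $\cn$ it follows because $k$ is $C^2$ and the two branches $\pm A\cn$ have opposite, nonzero derivatives at each (simple) zero of $\cn$, so $k/(A\cn)\in\{\pm1\}$, being locally constant off the zero set and continuous across it, is globally constant. Second, all elliptic-function identities invoked ($1-\sn^2=\cn^2$, $1-m\sn^2=\dn^2$, $\cn(\,\cdot\,,1)=\sech$, $\dn(\,\cdot\,,0)\equiv1$) are recorded in Appendix~\ref{sec:elliptic}. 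I do not expect a genuine obstacle; the only mildly delicate part is the bookkeeping that matches the boundary parameter values $m\in\{0,1\}$ to cases~(iii) and~(v), exactly as arranged above. A fully self-contained variant would instead multiply \eqref{eq:elastica_ODE_2D} by $k_s$, integrate to the first integral $k_s^2=E-\tfrac14k^4+\tfrac\lambda2k^2$, and classify the level sets of this one-dimensional Hamiltonian directly, but routing through the already-proven Theorem~\ref{thm:curvature_formula} is shorter.
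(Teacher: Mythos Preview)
Your proposal is correct and follows essentially the same route as the paper: both arguments reduce to Theorem~\ref{thm:curvature_formula}, use planarity to force $c=0$ and hence $w=m$ or $w=1$, and then read off cases (ii)--(v) via the identities $1-\sn^2=\cn^2$, $1-m\sn^2=\dn^2$, and the degenerations at $m\in\{0,1\}$. Your treatment is in fact slightly more explicit than the paper's in handling the sign-branch passage from $k^2$ to $k$ and in absorbing the constant-curvature case (v) into the $w=1$, $m=0$ degeneration rather than setting it aside as ``obvious''.
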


\begin{proof}
    It is an almost direct corollary of Theorem \ref{thm:curvature_formula}.
    Since the constant-curvature case is obvious, we assume that a given elastica has non-constant curvature.
    Then the elastica corresponds to some case with $m>0$ in Theorem \ref{thm:curvature_formula}.
    Since the elastica is planar, we have $c=0$, and hence either $m=w$ or $w=1$.
    
    The former case $m=w$ with $m\in(0,1)$ corresponds to case (ii); by Theorem \ref{thm:curvature_formula} the curvature is of the form
    \[
    k^2=A^2(1-\sn^2(\alpha s+\beta,m))=A^2\cn^2(\alpha s+\beta,m),
    \]
    and hence the smoothness of $k$ (with the fact that the derivative of $\cn$ does not vanish at the zeroes) implies that 
    \[
    k=\pm A\cn(\alpha s+\beta,m),
    \]
    where the constants satisfy all the necessary relations.
    
    The latter case $w=1$ with $m\in(0,1)$ corresponds to case (iv); again by Theorem \ref{thm:curvature_formula},
    \[
    k^2=A^2(1-m\sn^2(\alpha s+\beta,m))=A^2\dn^2(\alpha s+\beta,m)
    \]
    and since $\dn$ is positive we have
    \[
    k=\pm A\dn^2(\alpha s+\beta,m).
    \]
    
    Finally, the case $m=1$, where we also need to have $w=1$, corresponds to case (iii) since $\cn(x,1)=\dn(x,1)=\sech{x}$.
\end{proof}

% \begin{remark}
%     Miscellaneous remarks are in order:
%     \begin{itemize}
%         \item Case (i) occurs for any $\lambda\in\R$, while the others restrict the possible value of $\lambda$.
%         For example, the case of free elastica ($\lambda=0$) only occurs in case (i) and case (iii) with $m=\frac12$.
%         \item Case (ii) is the only case where the curvature changes the sign.
%         \item Case (iii) is the only case where the curvature has no periodicity.
%     \end{itemize}
% \end{remark}

Now we obtain explicit parametrizations of planar elasticae.
The integrability of planar elasticae dates back to a work of Saalsch\"utz in 1880 \cite{Saa1880}.
For a planar curve $\gamma$, let $\theta$ denote the tangential angle function so that $\ds\gamma=T=(\cos\theta,\sin\theta)$.
The choice of continuous $\theta$ is unique modulo $2\pi$.
Recall that $\ds\theta=k$ since $\ds T=\ds\theta(-\sin\theta,\cos\theta)=kN$.

\begin{theorem}[Explicit parametrizations of planar elasticae]\label{thm:planar_explicit}
  Let $\gamma:[0,L]\to\R^2$ be an arclength parametrized elastica.
  Then, up to similarity (i.e., translation, rotation, reflection, and dilation), the curve is represented by
  $$\gamma(s)=\gamma_*(s+s_0)$$
  for some $s_0\in\R$, where $\gamma_*$ is one of the following arclength parametrized curves:
  \begin{itemize}
    \item \textup{(Case I --- Linear elastica)}
    \begin{equation}\label{eq:EP1}\notag%\tag{I}
      \gamma_\ell(s) =
      \begin{pmatrix}
      s  \\
      0
      \end{pmatrix}.
    \end{equation}
    In this case, $\theta_\ell(s)=0$ and $k_\ell(s)=0$.
    \item \textup{(Case II --- Wavelike elastica)}
    There exists $m \in (0,1)$ such that
    \begin{equation}\label{eq:EP2}\notag%\tag{II}
      \gamma_w(s)=\gamma_w(s,m) =
      \begin{pmatrix}
      2 E(\am(s,m),m )-  s  \\
      -2 \sqrt{m} \cn(s,m)
      \end{pmatrix}.
    \end{equation}
    In this case, $\theta_w(s)=2\arcsin(\sqrt{m}\sn(s,m))$ and $k_w(s) = 2 \sqrt{m} \cn(s,m).$
    \item \textup{(Case III --- Borderline elastica)}
    \begin{equation}\label{eq:EP3}\notag%\tag{III}
      \gamma_b(s) =
      \begin{pmatrix}
      2 \tanh{s} - s  \\
      - 2 \sech{s}
      \end{pmatrix}.
    \end{equation}
    In this case, $\theta_b(s)=2\arcsin(\tanh{s})$ and $k_b(s) = 2\sech{s}.$
    \item \textup{(Case IV --- Orbitlike elastica)}
    There exists $m \in (0,1)$ such that
    \begin{equation}\label{eq:EP4}\notag%\tag{IV}
      \gamma_o(s)=\gamma_o(s,m) = \frac{1}{m}
      \begin{pmatrix}
      2 E(\am(s,m),m)  + (m-2)s \\
      - 2\dn(s,m)
      \end{pmatrix}.
    \end{equation}
    In this case, $\theta_o(s)=2\am(s,m)$ and $k_o(s) = 2 \dn(s,m).$
    \item \textup{(Case V --- Circular elastica)}
    \begin{equation}\label{eq:EP5}\notag%\tag{V}
      \gamma_c(s) =
      \begin{pmatrix}
      \sin{s}  \\
      -\cos{s}
      \end{pmatrix}.
    \end{equation}
    In this case, $\theta_c(s)=s$ and $k_c(s)=1$.
  \end{itemize}
\end{theorem}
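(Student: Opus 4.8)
The plan is to combine Theorem~\ref{thm:curvature_formula_2D}, which enumerates the possible signed curvatures, with the fundamental theorem of plane curves (a planar curve is determined by its signed curvature up to an orientation-preserving isometry), and to use the full similarity group together with the free shift $s_0$ to normalize all the constants appearing there.

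First I would normalize the curvature. Given an arclength-parametrized planar $\lambda$-elastica $\gamma$, Theorem~\ref{thm:curvature_formula_2D} gives $k_\gamma$ as one of $0$, $\pm A\cn(\alpha s+\beta,m)$, $\pm A\sech(\alpha s+\beta)$, $\pm A\dn(\alpha s+\beta,m)$, or $\pm\sqrt\lambda$. Replacing $\gamma$ by $s\mapsto\alpha\gamma(s/\alpha)$ (a dilation, which multiplies curvature by $1/\alpha$ and divides the parameter by $\alpha$) reduces the frequency $\alpha$ to $1$; the amplitude relations $A^2=4\alpha^2m$, $A^2=4\alpha^2$, $A^2=4\alpha^2$ then force $A$ to be $2\sqrt m$, $2$, $2$ respectively, and in the constant-curvature case we normalize $\sqrt\lambda$ to $1$. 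A reflection of the plane reverses the sign of the signed curvature, so the $\pm$ may be dropped. The residual phase $\beta$ cannot be removed by any planar transformation, but it is precisely the shift $s_0$ in the statement. Hence it suffices to treat the five canonical curvature functions $k_\ell\equiv0$, $k_w(s)=2\sqrt m\cn(s,m)$, $k_b(s)=2\sech s$, $k_o(s)=2\dn(s,m)$, $k_c\equiv1$, and for each to exhibit one arclength curve with that signed curvature; by the fundamental theorem of plane curves any two such curves differ by a rigid motion, so I only need to verify that the curves in the statement work.

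Second, for each $k_*$ I would integrate twice. Recall $\theta_s=k$ and $\gamma_s=(\cos\theta,\sin\theta)$, so $\theta_*(s)=\int_0^sk_*$ up to an additive constant fixed by a rotation (consistent with $\theta_*(0)=0$ for the canonical forms), and then $\gamma_*(s)=\int_0^s(\cos\theta_*,\sin\theta_*)$ up to an additive constant fixed by a translation. The tangent angles come from $\frac{d}{ds}\arcsin(\sqrt m\sn(s,m))=\sqrt m\cn(s,m)$ --- valid globally, since $|\sqrt m\sn|<1$ and $\sqrt{1-m\sn^2}=\dn>0$ --- together with $\frac{d}{ds}\am(s,m)=\dn(s,m)$ and $\frac{d}{ds}\arcsin(\tanh s)=\sech s$; this yields $\theta_w=2\arcsin(\sqrt m\sn(s,m))$, $\theta_o=2\am(s,m)$, $\theta_b=2\arcsin(\tanh s)$, $\theta_c=s$, $\theta_\ell=0$. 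For the positions I would use the half-angle identities $\cos\theta_w=1-2m\sn^2$, $\sin\theta_w=2\sqrt m\sn\dn$; $\cos\theta_o=1-2\sn^2$, $\sin\theta_o=2\sn\cn$; $\cos\theta_b=2\sech^2 s-1$, $\sin\theta_b=2\sech s\tanh s$; and integrate with the help of $\int_0^s\sn^2(\sigma,m)\,d\sigma=\frac1m(s-E(\am(s,m),m))$ (obtained from $\dn^2=1-m\sn^2$ and $E(\am(s,m),m)=\int_0^s\dn^2$), $\frac{d}{ds}\cn=-\sn\dn$, and $\frac{d}{ds}\dn=-m\sn\cn$. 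Choosing the remaining translation constant appropriately reproduces $\gamma_w$, $\gamma_o$, $\gamma_b$ exactly as written; the circular and linear cases are immediate. A final differentiation confirms $\theta_*'=k_*$ in each case, closing the loop.

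The proof has no essential difficulty: it is a careful accounting of elliptic-function antiderivatives. The two places that need attention are the branch/sign issues in the $\arcsin$-type primitives (i.e.\ that $\cos(\theta/2)$ equals $+\dn$ or $+\sech$), which follow because $\theta$ is continuous with $\theta(0)=0$ and $\dn,\sech>0$; and the bookkeeping that the dilation also rescales the arclength parameter and that the surviving phase must be recorded as $s_0$ rather than absorbed into a planar isometry. I would close with a remark that the degenerate limits are consistent with the list: $m\to1$ collapses Cases~II and~IV onto Case~III via $\cn(\cdot,1)=\dn(\cdot,1)=\sech$, and $m\to0$ collapses them onto Case~V (or, for the trivial solution $k\equiv0$, onto Case~I).
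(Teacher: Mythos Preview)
Your proposal is correct and follows essentially the same route as the paper: reduce via Theorem~\ref{thm:curvature_formula_2D} to the five curvature profiles, normalize by similarity (your handling of the dilation, the sign via reflection, and the residual phase as $s_0$ is exactly right), and then check that the listed $\gamma_*$ realize these curvatures. The only difference is that you actually carry out the integration to \emph{derive} the parametrizations, whereas the paper's proof merely verifies them by differentiating $\gamma_*$ and $\theta_*$, explicitly deferring the derivation to outside references in a subsequent remark; your version is thus a strict elaboration of the paper's argument rather than a different one.
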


\begin{proof}
    By Theorem \ref{thm:curvature_formula_2D} we deduce that, up to a dilation, the curvature of any planar elastica is given by one of Cases I--V.
    Direct computations show that the derivatives of the given formulae satisfy $\partial_s\gamma_*=(\cos\theta_*,\sin\theta_*)$ and $\partial_s\theta_*=k_*$.
\end{proof}

\begin{remark}
    Although the above proof is logically complete, it does not tell anything about how to derive the formulae.
    There are several ways of derivations.
    A direct argument via integration of elliptic functions can be found in \cite[Section 4.3]{MYarXiv2203}, including more general $p$-elasticae (the classical elastica corresponds to $p=2$).
    Another argument via a dynamical system formulation can be found in \cite[Apprendix B.3]{MR23}.
\end{remark}

\begin{remark}
    All the parametrizations are chosen so that $\theta_*$ is odd (and hence $\theta_*(0)=0$ and $k_*$ is even) and $k_*(0)\geq0$, as well as $\langle\gamma_*(0),e_1\rangle=0$ and $\langle\gamma_*(0),e_2\rangle\leq0$, where $\{e_j\}_{i=1}^n$ denotes the canonical basis of $\R^n$.
    Figure \ref{fig:2-Elastiace} describes vertically flipped versions of those parametrizations.
\end{remark}

%%%%%%%%%%%%%%%%%%%%%%%%%%%%%% 
\begin{figure}[htbp]
\centering
\includegraphics[width=125mm]{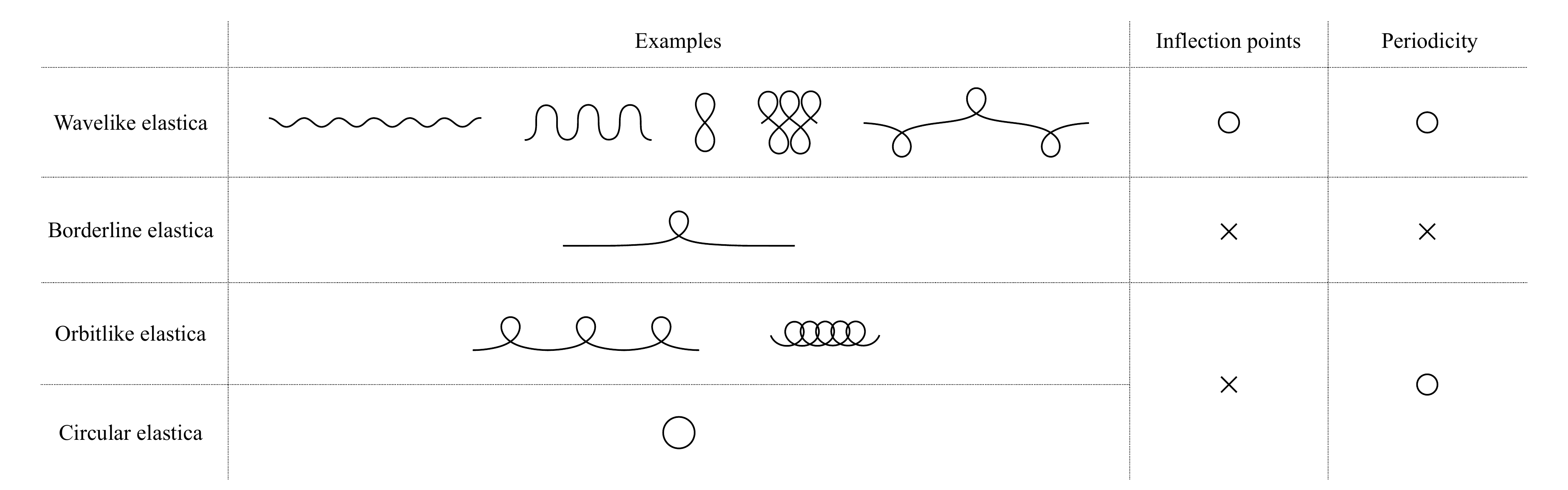}
\caption{Basic patterns of planar elasticae \cite{MY_2024_Crelle}.}
\label{fig:2-Elastiace}
\end{figure}
%%%%%%%%%%%%%%%%%%%%%%%%%%%%%% 

\begin{remark}
    The tangential angle of the borderline elastica is twice the Gudermannian function, which has several equivalent expressions:
    $$\theta_b(s)=2\am(s,1)=2\arcsin(\tanh{s})=2\arctan(\sinh{s})=4\arctan(e^s)-\pi.$$
    It is easy to observe that the function is strictly increasing from $-\pi$ to $\pi$.
\end{remark}

\begin{remark}\label{rem:multiplier}
    We can directly compute the multipliers of the above curves:
    \begin{itemize}
        \item[(I)] $\gamma_\ell$ is a $\lambda$-elastica for any $\lambda\in\R$.
        \item[(II)] $\gamma_w=\gamma_w(\cdot,m)$ is a $\lambda$-elastica for $\lambda=2(2m-1)$.
        \item[(III)] $\gamma_b$ is a $\lambda$-elastica for $\lambda=2$.
        \item[(IV)] $\gamma_o=\gamma_o(\cdot,m)$ is a $\lambda$-elastica for $\lambda=2(2-m)$.
        \item[(V)] $\gamma_c$ is a $\lambda$-elastica for $\lambda=1$.
    \end{itemize}
    The sign of $\lambda$ is preserved under similarity actions (although the value is not).
    In particular, the case of \emph{free elastica} ($\lambda=0$) only occurs in case (I) and case (II) with $m=\frac12$.
    The latter, unique nontrivial free elastica is called the \emph{rectangular elastica}, which is represented by the graph of a periodic graph with vertical slopes at the inflection points.
\end{remark}

\begin{remark}\label{rem:periodicity}
    Any wavelike elastica has periodicity in the sense that the curvature $k_w$ and the tangential angle $\theta_w$ are both anti-periodic with anti-period $2K(m)$, and the curve $\gamma_w$ is quasi-periodic in the sense that
    \begin{align}\label{eq:wave_periodicity}
        \gamma_w(s+4K(m),m)=\gamma_w(s,m) + 4
        \begin{pmatrix}
          2 E(m)-  K(m)  \\
          0
        \end{pmatrix}.
    \end{align}
    As for orbitlike elastica, the curvature $k_o$ is $2K(m)$-periodic, while $\theta_o$ and $\gamma_o$ are quasi-periodic in the sense that $\theta_o(s+2K(m)) = \theta_o(s) + 2\pi$ and
    \begin{align}\label{eq:orbit_periodicity}
        \gamma_o(s+2K(m),m) = \gamma_o(s,m) + 
        \frac{2}{m}
          \begin{pmatrix}
          2 E(m)  + (m-2)K(m) \\
          0
          \end{pmatrix}.
    \end{align}
\end{remark}

We close this section by classifying closed planar elasticae.

In view of periodicity \eqref{eq:wave_periodicity} and \eqref{eq:orbit_periodicity}, we investigate the behavior of some quantities involving complete elliptic integrals.
We first address the wavelike case.

\begin{lemma}\label{lem:elliptic_integral_wavelike}
  The function $2E(m)-K(m)$ is strictly decreasing from $\frac{\pi}{2}$ to $-\infty$ with respect to $m\in(0,1)$.
  In particular, there is a unique root $m^*\in(0,1)$ such that $2E(m^*)-K(m^*)=0$.
  %In addition, $m^*>\frac{1}{2}$.
\end{lemma}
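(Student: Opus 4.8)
The plan is to prove the claim by standard analysis of the complete elliptic integrals $E(m)$ and $K(m)$ as functions of $m\in(0,1)$. Set $f(m):=2E(m)-K(m)$. The first step is to compute the limiting values: as $m\to 0^+$ we have $E(m),K(m)\to \frac{\pi}{2}$, so $f(0^+)=\frac{\pi}{2}$; as $m\to 1^-$ we have $E(m)\to 1$ while $K(m)\to +\infty$, so $f(m)\to -\infty$. These facts are recorded in the appendix (or are standard) and follow from the integral representations $K(m)=\int_0^{\pi/2}(1-m\sin^2\phi)^{-1/2}d\phi$ and $E(m)=\int_0^{\pi/2}(1-m\sin^2\phi)^{1/2}d\phi$.

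The second and main step is to show $f'(m)<0$ on $(0,1)$, which gives strict monotonicity; combined with the boundary values and the intermediate value theorem, this yields the unique root $m^*$. Here I would invoke the classical derivative formulas
\[
\frac{dK}{dm}=\frac{E(m)-(1-m)K(m)}{2m(1-m)},\qquad \frac{dE}{dm}=\frac{E(m)-K(m)}{2m},
\]
both of which are (or should be) stated in Appendix~\ref{sec:elliptic}. Then
\[
f'(m)=2\cdot\frac{E-K}{2m}-\frac{E-(1-m)K}{2m(1-m)}=\frac{(1-m)(E-K)-\tfrac12\big(E-(1-m)K\big)}{m(1-m)}.
\]
Simplifying the numerator: $(1-m)(E-K)-\tfrac12 E+\tfrac12(1-m)K = \big(1-m-\tfrac12\big)E + \big(-(1-m)+\tfrac12(1-m)\big)K = \big(\tfrac12-m\big)E-\tfrac12(1-m)K$. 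So
\[
f'(m)=\frac{(1-2m)E(m)-(1-m)K(m)}{2m(1-m)}.
\]
(One should double-check this algebra against the normalization of the derivative formulas actually used in the appendix, since conventions for the modulus vs.\ parameter differ by factors.)

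The third step is to verify the numerator $N(m):=(1-2m)E(m)-(1-m)K(m)$ is negative on $(0,1)$. Since $E(m)\le K(m)$ always (as $\sqrt{1-m\sin^2\phi}\le 1\le(1-m\sin^2\phi)^{-1/2}$), when $m\ge\tfrac12$ we get $N(m)\le (1-2m)K(m)-(1-m)K(m)=-mK(m)<0$ immediately. For $m<\tfrac12$ the term $(1-2m)E$ is positive, so one needs the sharper inequality $(1-2m)E(m)<(1-m)K(m)$. This can be obtained by comparing integrands pointwise after writing both sides as integrals over $(0,\pi/2)$, or by using the known inequality $E(m)<\frac{\pi}{2}\sqrt{1-m}+\text{(lower order)}$ versus $K(m)>\frac{\pi}{2}$; alternatively one checks $N(0^+)=\frac{\pi}{2}-\frac{\pi}{2}=0$ and $N'(m)<0$, which reduces to another elementary elliptic-integral estimate. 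I expect \textbf{this last numerator estimate for small $m$} to be the only genuinely delicate point; everything else is bookkeeping with the standard formulas from the appendix. A clean route is: $N(0)=0$ and
\[
N'(m)=-2E(m)+(1-2m)\frac{dE}{dm}-(-1)K(m)-(1-m)\frac{dK}{dm},
\]
which after substitution should collapse to a manifestly negative expression (a negative multiple of $K(m)$ or of $K(m)-E(m)\ge 0$), so that $N(m)<N(0)=0$ for $m>0$. This completes the proof: $f$ is strictly decreasing from $\frac{\pi}{2}$ to $-\infty$, hence has exactly one zero $m^*\in(0,1)$.
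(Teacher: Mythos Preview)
Your approach is correct in outline but takes a substantial detour compared to the paper.  The paper's proof is essentially one line: since $E'(m)<0$ and $K'(m)>0$ on $(0,1)$ (both recorded in the appendix), one has $f'(m)=2E'(m)-K'(m)<0$ immediately, and the boundary values $E(0)=K(0)=\tfrac{\pi}{2}$, $E(1-)=1$, $K(1-)=\infty$ finish the argument.  There is no need to expand the derivative formulas or to analyze the numerator $N(m)=(1-2m)E-(1-m)K$.

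Your longer route also has two genuine issues.  First, the step for $m\ge\tfrac12$ has a sign slip: since $1-2m\le0$, the inequality $E\le K$ gives $(1-2m)E\ge(1-2m)K$, not $\le$.  This is harmless, because for $m\ge\tfrac12$ both summands $(1-2m)E\le0$ and $-(1-m)K<0$ already make $N(m)<0$ trivially.  Second, and more seriously, your proposed strategy for $m<\tfrac12$---showing $N(0)=0$ and $N'(m)<0$---is circular.  If you actually carry out the substitution you describe, the result is
\[
N'(m)=\tfrac{3}{2}\bigl(K(m)-2E(m)\bigr)=-\tfrac{3}{2}f(m),
\]
so the sign of $N'$ is precisely the sign of $-f$, which is what you are trying to determine; in particular $N'$ is \emph{not} manifestly negative (it changes sign at $m^*$).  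The clean way out is the identity $N(m)=2m(1-m)f'(m)$ combined with the trivial observation $f'=2E'-K'<0$; but once you see this, the whole computation of $N$ is superfluous.
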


\begin{proof}
  The monotonicity follows by the facts that $E'<0$ with $E(0)=\frac{\pi}{2}$ and $E(1-)=1$, and that $K'>0$ with $K(0)=\frac{\pi}{2}$ and $K(1-)=\infty$.
  \if0
  Finally, since
  \begin{equation}
    2E\left(\frac{1}{2}\right)-K\left(\frac{1}{2}\right)= \int_0^{\pi/2}\frac{1-\sin^2\theta}{\sqrt{1-\frac{1}{2}\sin^2\theta}}d\theta>0,
  \end{equation}
  the estimate $m^*>\frac{1}{2}$ follows by monotonicity.
  \fi
\end{proof}

In particular, by \eqref{eq:wave_periodicity} we find that the above parameter $m^*$ gives a nontrivial closed elastica (Figure \ref{fig:figureeight}).

\begin{definition}[Figure-eight elastica]\label{def:figure-eight_closed}
  A closed elastica is called a \emph{figure-eight elastica} if, up to similarity and reparametrization, the curve is given by the wavelike elastica $\gamma_w(\cdot+s_0,m^*)$ for some $s_0\in\R$.
\end{definition}

We now turn to the orbitlike case.

\begin{lemma}\label{lem:elliptic_integral_orbitlike}
  The function $2E(m)+(m-2)K(m)$ is strictly decreasing from $0$ to $-\infty$ with respect to $m\in(0,1)$.
  In particular, $2E(m)+(m-2)K(m)<0$ for all $m\in(0,1)$.
\end{lemma}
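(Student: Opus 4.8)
The plan is to establish two facts: (1) that $g(m) := 2E(m) + (m-2)K(m)$ has limit $0$ as $m \to 0^+$ and tends to $-\infty$ as $m \to 1^-$, and (2) that $g$ is strictly decreasing on $(0,1)$. Given these, the statement follows immediately, including the consequence $g(m) < 0$ for all $m \in (0,1)$.

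For the endpoint behavior I would use the standard values $E(0) = K(0) = \tfrac{\pi}{2}$, so $g(0^+) = 2\cdot\tfrac{\pi}{2} + (0-2)\cdot\tfrac{\pi}{2} = \pi - \pi = 0$; and $E(1^-) = 1$ while $K(1^-) = +\infty$, so $g(1^-) = 2 + (1-2)\cdot(+\infty) = -\infty$. (Here I am relying on the facts about $E$ and $K$ already recorded in the proof of Lemma~\ref{lem:elliptic_integral_wavelike}, namely $E' < 0$, $K' > 0$, and these limiting values.)

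For the monotonicity I would differentiate. Recall the classical derivative formulae $\frac{dK}{dm} = \frac{E - (1-m)K}{2m(1-m)}$ and $\frac{dE}{dm} = \frac{E - K}{2m}$, which are elementary consequences of the integral definitions (and which I expect are recorded in Appendix~\ref{sec:elliptic}). Then
\begin{align*}
  g'(m) &= 2\cdot\frac{E-K}{2m} + K + (m-2)\cdot\frac{E-(1-m)K}{2m(1-m)} \\
        &= \frac{(1-m)(E-K)}{m(1-m)} + \frac{m(1-m)K}{m(1-m)} + \frac{(m-2)\big(E-(1-m)K\big)}{2m(1-m)}.
\end{align*}
Combining over the common denominator $2m(1-m)$ and simplifying the numerator should yield a clean expression; I expect the $E$-terms and $K$-terms to collapse to something manifestly negative on $(0,1)$ — plausibly a negative multiple of $E(m)$, or of $E(m) - (1-m)K(m)$ (which is positive), times a positive factor. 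The main obstacle is exactly this algebraic simplification: making sure the numerator really is negative for every $m\in(0,1)$ rather than just at the endpoints. If the raw computation does not obviously give a single-signed numerator, the fallback is to use the inequality $E(m) > (1-m)K(m)$ (equivalently $\frac{dK}{dm} > 0$, already available) together with $E(m) < K(m)$ to bound the mixed terms and force $g'(m) < 0$.

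Alternatively — and this may be cleaner — one can bypass differentiation of $g$ directly by writing $g(m) = \big(2E(m) - K(m)\big) - (1-m)K(m)$ and invoking Lemma~\ref{lem:elliptic_integral_wavelike}: the first bracket is strictly decreasing, and $-(1-m)K(m)$ is the product of the increasing negative-valued function $-(1-m)$... but this requires care since $(1-m)K(m)$ is not monotone in an obvious way (it decreases from $\tfrac{\pi}{2}$ to $0$, actually, since $\frac{d}{dm}[(1-m)K] = -K + (1-m)K' = -K + \frac{E-(1-m)K}{2m} < 0$ on $(0,1)$), so $-(1-m)K(m)$ is increasing, which is the wrong direction. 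Hence this decomposition does not immediately work, and I would proceed with the direct differentiation of $g$ as the primary route, falling back on the bound $E > (1-m)K$ to close the sign argument.
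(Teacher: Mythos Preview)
Your approach is correct and essentially the same as the paper's: differentiate $g$ using the formulae $E'=\frac{E-K}{2m}$ and $K'=\frac{E-(1-m)K}{2m(1-m)}$, then check the endpoint values. The algebra you left unfinished does collapse exactly as you anticipated: over the common denominator $2m(1-m)$ the numerator becomes $-m\big(E-(1-m)K\big)$, so $g'(m) = -\dfrac{E-(1-m)K}{2(1-m)} = -mK'(m) < 0$; your ``fallback'' inequality $E>(1-m)K$ is precisely what closes the sign, and is already encoded in $K'>0$.
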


\begin{proof}
    Using $E'=\frac{E-K}{2m}$ and $K'=\frac{E-(1-m)K}{2m(1-m)}$, we compute
    \begin{align*}
        \Big(2E+(m-2)K\Big)' = -\frac{E-(1-m)K}{2(1-m)} = -mK'<0.
    \end{align*}
    Considering the values at $m=0$ and $m\to1$ completes the proof.
\end{proof}

We are now ready to state and prove the classification theorem.

\begin{theorem}[Classification of closed planar elasticae]\label{thm:closed_planar}
  Let $\gamma:\R/\Z\to\R^2$ be an elastica.
  Then $\gamma$ is either a circle or a figure-eight, possibly multiply covered.
\end{theorem}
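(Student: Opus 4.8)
The plan is to use the explicit classification of planar elasticae from Theorem~\ref{thm:planar_explicit} together with the periodicity analysis in Lemmas~\ref{lem:elliptic_integral_wavelike} and~\ref{lem:elliptic_integral_orbitlike}. Given a closed elastica $\gamma:\R/\Z\to\R^2$, it is in particular a periodic arclength-parametrized planar elastica, so up to similarity and a shift of parameter it coincides with one of the five model curves $\gamma_\ell,\gamma_w,\gamma_b,\gamma_o,\gamma_c$. The linear elastica $\gamma_\ell$ and the borderline elastica $\gamma_b$ are clearly not periodic (their images are unbounded, or one checks $\theta_b$ is strictly monotone from $-\pi$ to $\pi$ without returning), so they cannot give closed curves. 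The circular elastica $\gamma_c$ is the round circle, possibly multiply covered, which is one of the allowed outcomes. So the real work is in the two remaining families.

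First I would treat the wavelike case. By Remark~\ref{rem:periodicity}, $\gamma_w(\cdot,m)$ is quasi-periodic with
\[
\gamma_w(s+4K(m),m)=\gamma_w(s,m)+4\begin{pmatrix}2E(m)-K(m)\\0\end{pmatrix},
\]
and the curvature and tangential angle are genuinely $4K(m)$-periodic. Hence $\gamma_w$ closes up (after possibly several periods) if and only if the translation vector vanishes, i.e. $2E(m)-K(m)=0$. By Lemma~\ref{lem:elliptic_integral_wavelike} this happens for exactly one value $m=m^*\in(0,1)$, which by Definition~\ref{def:figure-eight_closed} is precisely the figure-eight elastica (and its multiple covers). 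Next I would treat the orbitlike case similarly: by Remark~\ref{rem:periodicity},
\[
\gamma_o(s+2K(m),m)=\gamma_o(s,m)+\frac{2}{m}\begin{pmatrix}2E(m)+(m-2)K(m)\\0\end{pmatrix},
\]
and $\theta_o(s+2K(m))=\theta_o(s)+2\pi$, so the curve is periodic iff the translation vector vanishes, i.e. $2E(m)+(m-2)K(m)=0$. But Lemma~\ref{lem:elliptic_integral_orbitlike} shows this quantity is strictly negative for all $m\in(0,1)$, so no orbitlike elastica is closed. Combining the cases, a closed planar elastica is a (possibly multiply covered) circle or figure-eight.

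The main subtlety I expect is justifying that "closed" forces the curve to be \emph{exactly} one of the quasi-periodic model curves on its full period, rather than merely agreeing with one on a subinterval; this requires noting that a closed curve $\R/\Z\to\R^2$ is automatically a smooth (indeed analytic) elastica on all of $\R$ by periodic extension, so the global classification of Theorem~\ref{thm:planar_explicit} applies verbatim, and then that the minimal period of $\gamma$ must be a multiple of the intrinsic period ($4K(m)$ in the wavelike case, $2K(m)$ in the orbitlike case) of the curvature — which follows since the curvature of a closed curve is itself periodic and the period of $k$ determines the translation-vector bookkeeping. A secondary, purely bookkeeping point is that after $N$ periods the accumulated translation is $N$ times the single-period translation, so vanishing of the net translation is equivalent to vanishing of the single-period translation; this is where the two elliptic-integral lemmas enter. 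Everything else is a direct appeal to the stated results.
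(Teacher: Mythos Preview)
Your proposal is correct and follows essentially the same route as the paper: reduce to the five model curves of Theorem~\ref{thm:planar_explicit}, discard Cases I and III immediately, accept Case V as the circle, and then use the quasi-periodicity formulae together with Lemmas~\ref{lem:elliptic_integral_wavelike} and~\ref{lem:elliptic_integral_orbitlike} to force $m=m^*$ in Case II and to eliminate Case IV. Your added remarks on why the length must be an integer multiple of the curvature period are exactly the justification the paper leaves implicit.
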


\begin{proof}
  Up to similarity and reparametrization we may assume that the elastica under consideration is of the form $\gamma(s)=\gamma_*(s+s_0)$, where $s_0\in\R$ and $\gamma_*$ is one of the five parametrizations in Theorem \ref{thm:planar_explicit}.
  It is clear that in case V the only possibility is a circle, and in cases I and III there are no candidates.

  Consider case II.
  By the $4K(m)$-periodicity of the curvature $k_w$, the length of $\gamma$ needs to be of the form $L=4K(m)\mu$ for some positive integer $\mu$.
  In addition we need to have $\gamma(L)=\gamma(0)$, and hence $2E(m)-K(m)=0$ by periodicity \eqref{eq:wave_periodicity}.
  Lemma \ref{lem:elliptic_integral_wavelike} implies that $m=m^*$, that is, $\gamma$ is a $\mu$-fold figure-eight elastica.

  Finally we prove that $\gamma$ does not belong to case IV.
  Similarly as above, by the $2K(m)$-periodicity of $k_o$ and periodicity \eqref{eq:orbit_periodicity}, we need to have $2E(m)+(m-2)K(m)=0$, but this does not occur by Lemma \ref{lem:elliptic_integral_orbitlike}.
  The proof is complete.
\end{proof}

\begin{remark}
    In fact, the same classification as Theorem \ref{thm:closed_planar} holds for a slightly wider class, namely for elasticae $\gamma:[a,b]\to\R^2$ such that $\gamma(a)=\gamma(b)$ and $\gamma_s(a)=\gamma_s(b)$; in particular, periodicity of the curvature is not necessarily assumed.
    A proof is given in \cite[Theorem 5.6]{MYarXiv2203} for more general $p$-elasticae.
\end{remark}

\begin{remark}[Spatial elasticae]
    The integrability of spatial elasticae is more technical, but Langer--Singer discovered the general integrability in 1984 \cite{LS_1984_JLMS}, more than 100 years later than the planar case.
    They obtained explicit parametrizations of spatial elasticae, which are particularly used for classification of closed elasticae.
    In addition, they also developed a beautiful min-max argument \cite{LS_85} to deduce the following remarkable classification result:
    Any closed elastica $\gamma:\R/\Z\to\R^3$ is given by either
    \begin{enumerate}
        \item a circle (planar),
        \item a figure-eight elastica (planar), or
        \item one of an infinite family of embedded torus knots (spatial),
    \end{enumerate}
    possibly multiply-covered; among them, the only stable elastica is the one-fold circle.
    Here we do not enter the details and leave it to the original papers \cite{LS_1984_JLMS,LS_85} as well as the lecture notes \cite{Sin}.
\end{remark}

% \begin{theorem}[Explicit parametrizations of spatial elasticae]\label{thm:spatial_explicit}
%     Let $\gamma:[0,L]\to\R^3$ be an arclength parametrized elastica.
%     Then, up to similarity (i.e., translation, rotation, reflection, and dilation), the curve is represented by
%     $$\gamma(s)=\gamma_*(s+s_0)$$
%     for some $s_0\in\R$, where $\gamma_*$ is given by
%     \[
%     \gamma_*(s)=
%     \begin{pmatrix}
%         r(s)\cos\theta(s)\\
%         r(s)\sin\theta(s)\\
%         z(s)
%     \end{pmatrix},
%     \]
%     and the functions $(r,\theta,z)$ in the cylindrical coordinate system are given by
%     \begin{align*}
%         r(s) &= \frac{4}{a^2}\sqrt{\Big( 4(1+q^2-w^2)^2 - \Big)} \\
%         \theta_s(s) &= \\
%         z(s) &=
%     \end{align*}
% \end{theorem}

\section{Analytic perspective on elastica theory}\label{sec:elastica_anal}

From this section we use standard tools in functional analysis around Sobolev spaces; all the facts we will use can be found in most of standard textbooks, see e.g.\ \cite{Brezis2011,Leoni2017}.

Let $m\geq1$ be an integer and $1\leq p\leq\infty$.
Let $W^{m,p}(I;\R^n)$ denote the function space of vector-valued Sobolev functions such that all derivatives of up to $m$-th order are $L^p$ integrable.
Recall that for $p>1$ the Sobolev space $W^{m,p}(I;\R^n)$ is compactly embedded in $C^{m-1}(\bar{I};\R^n)$ \cite[Theorem 8.8]{Brezis2011}.
In particular, if $m\geq2$, then any $\gamma\in W^{m,p}(I;\R^n)$ is of class $C^1(\bar{I};\R^n)$, so that immersedness is still well-defined:
$$W^{m,p}_\mathrm{imm}(I;\R^n):= \left\{\gamma\in W^{m,p}(I;\R^n) \,\left|\, \min_{x\in\bar{I}}|\gamma'(x)|>0 \right.\right\}.$$
For elastica theory, the most natural energy space is $W^{2,2}_\mathrm{imm}(I;\R^n)$.
It is easy to check that for each $\gamma\in W^{2,2}_\mathrm{imm}(I;\R^n)$ the length $L[\gamma]$ and the bending energy $B[\gamma]$ are still well-defined.

% In this case we can define immersedness in such a way that the speed $|\gamma'|$ does not vanish on $\bar{I}$, and also embeddedness that $\gamma$ is immersed and has no self-intersection on $\bar{I}$ (i.e., injective).
% More explicitly,

% \begin{itemize}
%   \item $W^{m,p}_\mathrm{imm}(I;\R^n):=\{\gamma\in W^{m,p}(I;\R^n) \mid \min_{x\in\bar{I}}|\gamma'(x)|>0 \}$,
%   \item $W^{m,p}_\mathrm{emb}(I;\R^n):=\{\gamma\in W^{m,p}(I;\R^n) \mid \min_{x_1, x_2\in\bar{I},x_1\neq x_2}\frac{|\gamma(x_1)-\gamma(x_2)|}{|x_1-x_2|}>0 \}$.
% \end{itemize}

\subsection{Existence of minimizers: Direct method}

Thanks to the Sobolev framework, the existence of minimizers under various boundary conditions follows by the direct method in the calculus of variations.
To formulate this result we impose the so-called \emph{clamped boundary condition}, which geometrically prescribes the curve at the endpoints up to first order (as proposed by D.\ Bernoulli). 
Given $L_0>0$, $P_0,P_1\in\R^n$ such that $|P_0-P_1|<L_0$, and $V_0,V_1\in\bS^{n-1}\subset\R^n$, let
\[
\mathcal{A}:=\left\{\gamma\in W^{2,2}_\mathrm{imm}(I;\R^n)\,\left|\,
\begin{aligned}
    L[\gamma]=L_0,\ \gamma(a)=P_0,\ \gamma(b)=P_1,\\
    \gamma_s(a)=V_0,\ \gamma_s(b)=V_1
\end{aligned}
\right.\right\}.
\]
Note that the condition $|P_0-P_1|<L_0$ is natural to impose; if $|P_0-P_1|>L_0$ then $\mathcal{A}$ is empty, while if $|P_0-P_1|=L_0$ then $\mathcal{A}$ is not empty only when $\mathcal{A}$ consists of the trivial straight segment.

\begin{theorem}[Existence of clamped minimizers]\label{thm:existence_minimizer}
    Let $L_0>0$, $P_0,P_1\in\R^n$ such that $|P_0-P_1|<L_0$, and $V_0,V_1\in\bS^{n-1}\subset\R^n$.
    Then there exists $\bar{\gamma}\in\mathcal{A}$ such that
    \[
    B[\bar{\gamma}] = \inf_{\gamma\in\mathcal{A}} B[\gamma].
    \]
\end{theorem}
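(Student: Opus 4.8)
The plan is to apply the direct method in the calculus of variations, with the crucial preliminary step of reparametrizing every competitor to constant speed. First I would check that $\mathcal{A}\neq\emptyset$, so that the infimum is taken over a nonempty set and is finite: since $|P_0-P_1|<L_0$, there is a positive length budget available, and one can construct an explicit smooth immersed curve of length exactly $L_0$ realizing the prescribed endpoints and unit tangents (for instance by concatenating and smoothing short arcs that fix the boundary data near $P_0$ and $P_1$ and inserting a wiggle that absorbs the remaining length $L_0-|P_0-P_1|-o(1)>0$). Next, since both $L$ and $B$ are invariant under reparametrization, given any $\gamma\in\mathcal{A}$ I may reparametrize it over the fixed interval $I=(a,b)$ to have constant speed $\sigma:=L_0/(b-a)$, i.e. $|\gamma'|\equiv\sigma$; this does not change membership in $\mathcal{A}$ because it preserves the length, the endpoints, and the unit tangents $\gamma_s(a),\gamma_s(b)$.

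Then I would take a minimizing sequence $(\gamma_j)\subset\mathcal{A}$ with $B[\gamma_j]\to\inf_{\mathcal{A}}B$, each reparametrized to constant speed $\sigma$. The constant-speed condition forces $\langle\gamma_j',\gamma_j''\rangle\equiv0$, so formula \eqref{eq:def_bending2} collapses to $B[\gamma_j]=\sigma^{-3}\int_I|\gamma_j''|^2\,dx$. Together with $|\gamma_j'|\equiv\sigma$ and $\gamma_j(a)=P_0$, this bounds $(\gamma_j)$ in $W^{2,2}(I;\R^n)$. By reflexivity of $W^{2,2}$ a subsequence converges weakly in $W^{2,2}(I;\R^n)$ to some $\bar\gamma$, and by the compact embedding $W^{2,2}(I;\R^n)\hookrightarrow C^1(\bar I;\R^n)$ this convergence is strong in $C^1(\bar I;\R^n)$ along the subsequence. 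The $C^1$-convergence immediately yields admissibility of $\bar\gamma$: $|\bar\gamma'|\equiv\sigma>0$ (so $\bar\gamma$ is immersed and $L[\bar\gamma]=\sigma(b-a)=L_0$), $\bar\gamma(a)=P_0$, $\bar\gamma(b)=P_1$, and $\bar\gamma_s(a)=V_0$, $\bar\gamma_s(b)=V_1$; hence $\bar\gamma\in\mathcal{A}$.

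Finally I would invoke weak lower semicontinuity of the energy: since $\gamma_j''\rightharpoonup\bar\gamma''$ weakly in $L^2(I;\R^n)$ and the $L^2$-norm is weakly lower semicontinuous, $\|\bar\gamma''\|_{L^2}^2\le\liminf_j\|\gamma_j''\|_{L^2}^2$, and since $\bar\gamma$ also has constant speed $\sigma$ we obtain $B[\bar\gamma]=\sigma^{-3}\|\bar\gamma''\|_{L^2}^2\le\sigma^{-3}\liminf_j\|\gamma_j''\|_{L^2}^2=\liminf_j B[\gamma_j]=\inf_{\mathcal{A}}B$. Combined with $\bar\gamma\in\mathcal{A}$ this forces $B[\bar\gamma]=\inf_{\mathcal{A}}B$, completing the proof.

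I expect the main obstacle—and the reason the constant-speed reduction is indispensable—to be controlling the parametrization: for a general minimizing sequence the speed $|\gamma_j'|$ may degenerate to $0$ somewhere in the limit, so that immersedness (hence membership in $\mathcal{A}$) is lost, and moreover the unreduced integrand in \eqref{eq:def_bending2} is not convex in $\gamma$, so weak lower semicontinuity is not transparent. Reparametrizing to constant speed simultaneously pins the speed uniformly and turns $B$ into the manifestly convex functional $\sigma^{-3}\int_I|\gamma''|^2\,dx$, which resolves both difficulties at once.
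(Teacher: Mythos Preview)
Your proposal is correct and follows essentially the same route as the paper: reduce to constant-speed competitors, extract a weakly $W^{2,2}$-convergent (hence $C^1$-convergent) subsequence to recover admissibility, and conclude via weak lower semicontinuity of $\|\gamma''\|_{L^2}$. The only point the paper treats more carefully is the reparametrization step itself---it proves separately (Lemma~\ref{lem:arclength_reparametrization_Sobolev}) that arclength reparametrization preserves $W^{2,2}$-regularity, which you implicitly use.
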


To prove this it is important to check that the arclength reparametrization also works in the Sobolev framework.

\begin{lemma}\label{lem:arclength_reparametrization_Sobolev}
    Let $\gamma\in W^{2,2}_\mathrm{imm}(I;\R^n)$.
    Then there exists $\sigma\in W^{2,2}(I)$ such that 
    \begin{enumerate}
        \item $\sigma$ is a $C^1$-diffeomorphism from $\bar{I}$ to $\bar{J}$, where $J:=(0,L[\gamma])$;
        \item $\tilde{\gamma}:=\gamma\circ\sigma^{-1}$ is an element of $W^{2,2}(J;\R^n)$;
        \item $|\tilde{\gamma}'|=1$ a.e.\ in $J$.
    \end{enumerate}
    In addition,
    \[
    B[\gamma] = B[\tilde{\gamma}] = \int_J |\tilde{\gamma}''(\tilde{x})|^2 d\tilde{x}, \quad L[\gamma] = L[\tilde{\gamma}] = \int_J d\tilde{x}.
    \]
\end{lemma}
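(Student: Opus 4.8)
The plan is to construct the reparametrization $\sigma$ explicitly from the arclength function and then verify each of the three regularity claims using the Sobolev embedding $W^{2,2}(I)\hookrightarrow C^1(\bar I)$ together with the chain rule for Sobolev compositions. First I would set $\sigma(x):=\int_a^x|\gamma'(\xi)|\,d\xi$, so that $\sigma(a)=0$ and $\sigma(b)=L[\gamma]$. Since $\gamma\in W^{2,2}_\mathrm{imm}$, we have $\gamma'\in C^0(\bar I;\R^n)$ with $|\gamma'|\geq c_0>0$ on $\bar I$, hence the integrand $|\gamma'|$ lies in $C^0(\bar I)$, and in fact $|\gamma'|=\sqrt{\langle\gamma',\gamma'\rangle}\in W^{1,2}(I)$ because $\langle\gamma',\gamma'\rangle\in W^{1,1}$ is bounded away from zero and $\sqrt{\,\cdot\,}$ is smooth away from $0$; consequently $\sigma\in W^{2,2}(I)$ with $\sigma'=|\gamma'|\in W^{1,2}\cap C^0$ and $\sigma'\geq c_0>0$. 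This last bound makes $\sigma$ a strictly increasing $C^1$-diffeomorphism from $\bar I$ onto $\bar J=[0,L[\gamma]]$ whose inverse $\sigma^{-1}$ is also $C^1$ with $(\sigma^{-1})'=1/(\sigma'\circ\sigma^{-1})$ bounded; this establishes (i).

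Next, for (ii) and (iii), I would analyze $\tilde\gamma:=\gamma\circ\sigma^{-1}$. Since $\sigma^{-1}$ is a $C^1$-diffeomorphism with $(\sigma^{-1})'$ of bounded variation (being the reciprocal of $\sigma'\circ\sigma^{-1}$, a composition of a $W^{1,2}$ function with a $C^1$ map, hence again $W^{1,2}$), the composition $\gamma\circ\sigma^{-1}$ stays in $W^{2,2}$: by the chain rule $\tilde\gamma'=(\gamma'\circ\sigma^{-1})\cdot(\sigma^{-1})'$, and differentiating once more, $\tilde\gamma''=(\gamma''\circ\sigma^{-1})\cdot((\sigma^{-1})')^2+(\gamma'\circ\sigma^{-1})\cdot(\sigma^{-1})''$, where each factor is $L^2$ on the bounded interval $J$ because $\gamma''\circ\sigma^{-1}\in L^2$ (change of variables with a bi-Lipschitz map preserves $L^2$), $(\sigma^{-1})'\in C^0$, and $(\sigma^{-1})''\in L^2$. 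This gives (ii). For (iii), observe $|\tilde\gamma'|=|\gamma'\circ\sigma^{-1}|\cdot|(\sigma^{-1})'|=(\sigma'\circ\sigma^{-1})\cdot\frac{1}{\sigma'\circ\sigma^{-1}}=1$ a.e., since $|\gamma'|=\sigma'$ by construction.

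Finally, for the energy and length identities, I would invoke the established invariance of $B$ and $L$ under reparametrizations (stated in Section \ref{sec:elastica_geom}), which extends verbatim to the Sobolev setting because the change-of-variables formula $\int_I f(\sigma(x))\sigma'(x)\,dx=\int_J f(\tilde x)\,d\tilde x$ holds for $\sigma$ a bi-Lipschitz diffeomorphism and $f\in L^1$; applying this to the integrands defining $L$ and $B$ (recalling that $ds=|\gamma'|dx=\sigma'dx$) yields $L[\gamma]=L[\tilde\gamma]$ and $B[\gamma]=B[\tilde\gamma]$, and then $B[\tilde\gamma]=\int_J|\tilde\gamma''|^2\,d\tilde x$ follows from formula \eqref{eq:def_bending2} together with $|\tilde\gamma'|\equiv1$ and the resulting orthogonality $\langle\tilde\gamma',\tilde\gamma''\rangle=0$ a.e., which kills the second term. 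The main obstacle is the careful justification of the chain rule for second derivatives of the composition $\gamma\circ\sigma^{-1}$ when $\sigma$ is only $W^{2,2}$ (so $\sigma''$ need not be continuous): one must argue that $(\sigma^{-1})'$ genuinely lies in $W^{1,2}(J)$ — i.e., that inverting and reciprocating a positive $W^{1,2}\cap C^0$ function preserves the $W^{1,2}$ class — and that the products appearing in $\tilde\gamma''$ are legitimate (a product of a continuous function with an $L^2$ function, which is fine). Everything else is a routine application of the Sobolev embedding and change of variables.
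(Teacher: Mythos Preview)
Your proposal is correct and follows essentially the same route as the paper: define $\sigma(x)=\int_a^x|\gamma'|$, use $\sigma'=|\gamma'|\geq c_0>0$ and the embedding $W^{2,2}\hookrightarrow C^1$ to get the diffeomorphism, then compute $\tilde\gamma'$ and $\tilde\gamma''$ via the chain and product rules and bound everything in $L^2$ by change of variables. The only cosmetic difference is bookkeeping---the paper writes the second derivative in terms of $|\gamma'\circ\sigma^{-1}|^{-1}$ and its weak derivative rather than in terms of $(\sigma^{-1})''$, but since $(\sigma^{-1})'=|\gamma'\circ\sigma^{-1}|^{-1}$ these are literally the same computation, and the paper likewise isolates the step you flag as the ``main obstacle'' (that $|\gamma'\circ\sigma^{-1}|^{-1}\in W^{1,2}(J)$).
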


\begin{proof}
    Let $\sigma(t):=\int_0^t|\gamma'(x)|dx$ for $t\in\bar{I}=[a,b]$.
    Then $\sigma\in W^{2,2}(I)\subset C^1(\bar{I})$, and also $\sigma$ is a $C^1$-diffeomorphism from $\bar{I}$ to $\bar{J}$ since $\sigma(a)=0$, $\sigma(b)=L[\gamma]$, and $\sigma'=|\gamma'|>0$.
    Define $\tilde{\gamma}:=\gamma\circ\sigma^{-1}$.
    Then, since $\gamma\in W^{1,2}(I;\R^n)$ and $\sigma^{-1}\in C^1(\bar{J})$, the standard change of variables works so that $\tilde{\gamma}\in W^{1,2}(J;\R^n)$ and 
    \[
    \tilde{\gamma}'=(\sigma^{-1})'(\gamma'\circ\sigma^{-1}) = \frac{\gamma'\circ\sigma^{-1}}{|\gamma'\circ\sigma^{-1}|}.
    \]
    In particular, $|\tilde{\gamma}'|=1$.
    We finally prove that $\tilde{\gamma}\in W^{2,2}(J;\R^n)$.
    Since $\gamma'\in W^{1,2}(I;\R^n)$, the change of variables also works for $\gamma'\circ\sigma^{-1}$ so that $\gamma'\circ\sigma^{-1}\in W^{1,2}(J;\R^n)$ and 
    \[
    (\gamma'\circ\sigma^{-1})'=\frac{\gamma''\circ\sigma^{-1}}{|\gamma'\circ\sigma^{-1}|}.
    \]
    In addition, since $\min_{\bar{I}}|\gamma'|>0$, we also have $|\gamma'\circ\sigma^{-1}|^{-1}\in W^{1,2}(J)$ and \[
    \left(\frac{1}{|\gamma'\circ\sigma^{-1}|}\right)'= - \frac{\langle \gamma'\circ\sigma^{-1},(\gamma'\circ\sigma^{-1})'\rangle}{|\gamma'\circ\sigma^{-1}|^3} = - \frac{\langle \gamma'\circ\sigma^{-1},\gamma''\circ\sigma^{-1}\rangle}{|\gamma'\circ\sigma^{-1}|^4}.
    \]
    By the Leibniz rule, the second derivative $\tilde{\gamma}''$ exists (in $L^1$) and is given by
    \[
    \tilde{\gamma}'' = \left(\frac{\gamma'\circ\sigma^{-1}}{|\gamma'\circ\sigma^{-1}|}\right)' = \frac{\gamma''\circ\sigma^{-1}}{|\gamma'\circ\sigma^{-1}|^2} - \frac{\langle \gamma'\circ\sigma^{-1},\gamma''\circ\sigma^{-1}\rangle}{|\gamma'\circ\sigma^{-1}|^4}\gamma'\circ\sigma^{-1}.
    \]
    Since $|\gamma'|,|\gamma'|^{-1}\in C(\bar{I})\subset L^\infty(I)$, we have
    $
    |\tilde{\gamma}''|^2 \leq C|\gamma''\circ \sigma^{-1}|^2
    $
    and hence
    \begin{align*}
        \int_J|\tilde{\gamma}''(\tilde{x})|^2 d\tilde{x} &\leq C\int_J|\gamma''\circ \sigma^{-1}(\tilde{x})|^2 d\tilde{x}\\
        &= C\int_I|\gamma''(x)|^2\frac{1}{|\gamma'(x)|}dx \quad (x:=\sigma^{-1}(\tilde{x})) \\
        &\leq C'\int_I|\gamma''(x)|^2dx < \infty.
    \end{align*}
    This implies that $\tilde{\gamma}''\in L^2(J;\R^n)$.
    The preservation of the values of $B$ and $L$ under this change of variables is now straightforward to check.
\end{proof}

\begin{proof}[Proof of Theorem \ref{thm:existence_minimizer}]
    Recall that $\mathcal{A}$ is not empty.
    By assumption $\inf_\mathcal{A} B<\infty$.
    Take a minimizing sequence $\{\gamma_n\}_n\subset\mathcal{A}$ of $B$.
    Using the arclength reparametrization $\tilde{\gamma}_n$ to define $\bar{\gamma}_n(x):=\tilde{\gamma}_n(\tfrac{L_0(x-a)}{b-a})$, we obtain the admissible sequence $\{\bar{\gamma}_n\}\subset \mathcal{A}$ of constant-speed curves,
    \[
    |\bar{\gamma}_n'| = L_0,
    \]
    which is still minimizing, that is,
    \[
    \lim_{n\to\infty}B[\bar{\gamma}_n]= \inf_\mathcal{A} B < \infty.
    \]
    
    Now we observe that the sequence $\{\bar{\gamma}_n\}$ is bounded in $W^{2,2}(I;\R^n)$.
    Since
    \[
    \sup_n\|\bar{\gamma}_n'\|_\infty = L_0 < \infty,
    \]
    we also have 
    \[
    \sup_n\|\bar{\gamma}_n\|_\infty \leq \sup_n\left(|\bar{\gamma}_n(a)| + \int_a^b|\bar{\gamma}_n'|\right) \leq |P_0|+L_0<\infty.
    \]
    Thus the sequence is bounded in $W^{1,\infty}(I;\R^n)$ and in particular in $W^{1,2}(I;\R^n)$.
    In addition, since $\partial_s^2\bar{\gamma}_n=\frac{1}{L_0^2}\bar{\gamma}_n''$ and $ds=|\bar{\gamma}_n'|dx=L_0dx$, we have 
    \begin{align*}
        B[\bar{\gamma}_n] = \int_I|\partial_s^2\bar{\gamma}_n|^2ds = \frac{1}{L_0^3}\int_I|\bar{\gamma}_n''|^2dx,
    \end{align*}
    which means that
    \[
    \sup_n\|\bar{\gamma}_n''\|_2^2 = L_0^3 \sup_nB[\bar{\gamma}_n]<\infty.
    \]
    Hence the sequence $\{\bar{\gamma}_n\}$ is bounded in $W^{2,2}(I;\R^n)$.
    
    By compact embedding $W^{2,2}\subset\subset C^1$ and weak compactness of $W^{2,2}$, there exist a subsequence of $\{\bar{\gamma}_n\}$ (without relabelling) and a limit curve $\bar{\gamma}\in W^{2,2}(I;\R^n)$ such that $\bar{\gamma}_n\to\bar{\gamma}$ in $C^1(\bar{I};\R^n)$ and weakly in $W^{2,2}(I;\R^n)$ as $n\to\infty$.

    We finally confirm that $\bar{\gamma}$ is a minimizer.
    By $C^1$-convergence, the limit $\bar{\gamma}$ still satisfies the same boundary condition and the constant-speed property $|\bar{\gamma}'|=L_0$, so that $\bar{\gamma}\in \mathcal{A}$.
    In addition, by weak lower semicontinuity of the $L^2$-norm,
    \[
    \liminf_{n\to\infty} B[\bar{\gamma}_n] = \frac{1}{L_0^3}\liminf_{n\to\infty}\|\bar{\gamma}_n''\|_2^2 \geq \frac{1}{L_0^3}\|\bar{\gamma}''\|_2^2 = B[\bar{\gamma}].
    \]
    Therefore, $B[\bar{\gamma}]\leq \liminf_{n\to\infty} B[\bar{\gamma}_n] =\inf_\mathcal{A} B$.
\end{proof}

The almost same proof works for various other boundary conditions.
Typical examples include the \emph{pinned boundary condition}:
\[
    \mathcal{A}':=\left.\left\{\gamma\in W^{2,2}_\mathrm{imm}(I;\R^n)\,\,\right|
        L[\gamma]=L_0,\ \gamma(a)=P_0,\ \gamma(b)=P_1
    \right\} .
\]
In fact, as the first-order boundary condition was not used at any step of the proof of Theorem \ref{thm:existence_minimizer}, we can prove the following theorem just by replacing $\mathcal{A}$ with $\mathcal{A}'$.

\begin{theorem}[Existence of pinned minimizers]\label{thm:existence_minimizer_pinned}
    Let $L_0>0$, $P_0,P_1\in\R^n$ such that $|P_0-P_1|< L_0$.
    Then there exists $\bar{\gamma}\in\mathcal{A}'$ such that
    \[
    B[\bar{\gamma}] = \inf_{\gamma\in\mathcal{A}'} B[\gamma].
    \]
\end{theorem}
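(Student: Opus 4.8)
The plan is to repeat, essentially verbatim, the direct-method argument of Theorem \ref{thm:existence_minimizer}, the point being that the first-order endpoint data $V_0,V_1$ played no role there. First I would note that $\mathcal{A}'$ is nonempty: since $|P_0-P_1|<L_0$, there is an immersed $W^{2,2}$ curve joining $P_0$ to $P_1$ of length exactly $L_0$ (for instance a suitable circular arc), and fixing any $V_0,V_1\in\bS^{n-1}$ shows in fact $\mathcal{A}'\supset\mathcal{A}\neq\emptyset$. Hence $\inf_{\mathcal{A}'}B<\infty$, and I pick a minimizing sequence $\{\gamma_n\}\subset\mathcal{A}'$.

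Next, using Lemma \ref{lem:arclength_reparametrization_Sobolev} followed by the affine rescaling $x\mapsto\tfrac{L_0(x-a)}{b-a}$, I would replace each $\gamma_n$ by a constant-speed curve $\bar{\gamma}_n$ with $|\bar{\gamma}_n'|\equiv L_0$. Since $B$ and $L$ are invariant under reparametrization, and since the reparametrization and the rescaling both leave the endpoint values $\bar{\gamma}_n(a)=P_0$, $\bar{\gamma}_n(b)=P_1$ untouched, the new sequence $\{\bar{\gamma}_n\}$ still lies in $\mathcal{A}'$ and is still minimizing. The same coercivity estimates as before then give a uniform bound in $W^{2,2}(I;\R^n)$: the identity $\|\bar{\gamma}_n'\|_\infty=L_0$ together with $\bar{\gamma}_n(a)=P_0$ yields a uniform $L^\infty$ bound on $\bar{\gamma}_n$, while $\|\bar{\gamma}_n''\|_2^2=L_0^3B[\bar{\gamma}_n]$ is bounded because the sequence is minimizing.

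By the compact embedding $W^{2,2}\subset\subset C^1$ and weak compactness of $W^{2,2}$, I would extract a subsequence (not relabelled) with $\bar{\gamma}_n\to\bar{\gamma}$ in $C^1(\bar{I};\R^n)$ and weakly in $W^{2,2}(I;\R^n)$. The $C^1$ (indeed $C^0$) convergence preserves $\bar{\gamma}(a)=P_0$ and $\bar{\gamma}(b)=P_1$, and the $C^1$ convergence preserves $|\bar{\gamma}'|\equiv L_0$, hence $L[\bar{\gamma}]=L_0$ and immersedness; thus $\bar{\gamma}\in\mathcal{A}'$. Finally, weak lower semicontinuity of the $L^2$-norm gives $B[\bar{\gamma}]=L_0^{-3}\|\bar{\gamma}''\|_2^2\leq L_0^{-3}\liminf_n\|\bar{\gamma}_n''\|_2^2=\liminf_n B[\bar{\gamma}_n]=\inf_{\mathcal{A}'}B$, so $\bar{\gamma}$ is a minimizer.

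There is no genuinely new obstacle: the only things to verify are that $\mathcal{A}'$ is nonempty and that it is stable under the two normalizations (constant-speed reparametrization and affine rescaling) and under $C^1$-limits. All three operations act trivially on the zeroth-order endpoint data and multiply the length by the obvious factor, so this is immediate. In short, the content of the theorem is precisely that the proof of Theorem \ref{thm:existence_minimizer} never invoked the conditions $\gamma_s(a)=V_0$, $\gamma_s(b)=V_1$, so deleting them from the admissible class changes nothing.
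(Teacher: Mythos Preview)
Your proposal is correct and follows exactly the approach indicated in the paper: the paper simply remarks that the first-order boundary condition was not used anywhere in the proof of Theorem~\ref{thm:existence_minimizer}, so replacing $\mathcal{A}$ by $\mathcal{A}'$ yields the result verbatim. You have spelled out those same steps in detail, which is fine.
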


\subsection{Regularity}

Now we turn to the regularity issue.
In classical elastica theory, we can always prove that every minimizer, or more generally every critical point in the Sobolev framework must be smooth.

\begin{theorem}[Regularity of critical points]\label{thm:regularity_critical}
    Let $\gamma\in W^{2,2}_\textup{imm}(I;\R^n)$ satisfy the property that there exists $\lambda\in\R$ such that for any $\eta\in C^\infty_c(I)$,
    \[
    \frac{d}{d\varepsilon}\big(B[\gamma+\varepsilon\eta]+\lambda L[\gamma+\varepsilon\eta]\big)\Big|_{\varepsilon=0}=0.
    \]
    Then the arclength parametrization $\tilde{\gamma}$ of $\gamma$ is of class $C^\infty$, and hence a $\lambda$-elastica in the classical sense.
\end{theorem}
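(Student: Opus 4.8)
The plan is to bootstrap regularity from the weak Euler--Lagrange equation, working with the arclength parametrization $\tilde\gamma\in W^{2,2}(J;\R^n)$ supplied by Lemma \ref{lem:arclength_reparametrization_Sobolev}, for which $|\tilde\gamma'|\equiv1$ a.e. Since the energies $B$ and $L$ and the criticality condition are invariant under reparametrization, $\tilde\gamma$ satisfies the same weak identity. First I would rewrite the first variation in the unit-speed form \eqref{eq:FVLunit}--\eqref{eq:FVBunit}: for all $\eta\in C^\infty_c(J;\R^n)$,
\[
\int_J\big(2\langle\tilde\gamma_{ss},\eta_{ss}\rangle-3|\tilde\gamma_{ss}|^2\langle\tilde\gamma_s,\eta_s\rangle+\lambda\langle\tilde\gamma_s,\eta_s\rangle\big)\,ds=0.
\]
Here $\tilde\gamma_{ss}=\tilde\gamma''\in L^2$ and $\tilde\gamma_s=\tilde\gamma'\in L^\infty$, so every term makes sense. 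This says that the distributional fourth derivative of $2\tilde\gamma$ equals a first derivative of the $L^1$ (in fact $L^2\cdot L^\infty$) vector field $3|\tilde\gamma''|^2\tilde\gamma'-\lambda\tilde\gamma'$; equivalently, integrating by parts once at the level of distributions, there is an $L^1$ primitive so that $\tilde\gamma'''\in L^1_{loc}$, hence $\tilde\gamma\in W^{3,1}_{loc}$.

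The crucial upgrade is to gain enough continuity to treat $|\tilde\gamma''|^2$ as a genuine function and then iterate. The clean way is: from the identity, $2\tilde\gamma''''=\big(3|\tilde\gamma''|^2\tilde\gamma'-\lambda\tilde\gamma'\big)'$ in $\mathcal D'(J)$, so $2\tilde\gamma'''=3|\tilde\gamma''|^2\tilde\gamma'-\lambda\tilde\gamma'+C$ a.e.\ for a constant vector $C$ (one spatial dimension, so primitives of the zero distribution are constants). The right-hand side lies in $L^1$, giving $\tilde\gamma'''\in L^1$ and $\tilde\gamma\in W^{3,1}\hookrightarrow C^2(\bar J)$. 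Now $\tilde\gamma''$ is continuous, hence $|\tilde\gamma''|^2\tilde\gamma'\in C^0$, so the same identity yields $\tilde\gamma'''\in C^0$, i.e.\ $\tilde\gamma\in C^3$. Then the right-hand side is $C^1$, so $\tilde\gamma'''\in C^1$ and $\tilde\gamma\in C^4$, and inductively $\tilde\gamma\in C^\infty(\bar J)$. (Alternatively, one may quote the Lagrange multiplier theorem in the Sobolev setting to reduce to the clamped/pinned critical-point condition, but the derivation of the pointwise third-order identity above is self-contained and avoids that.) Once smoothness is in hand, the classical first-variation computation of Section 2 applies verbatim and shows $\tilde\gamma$ solves \eqref{eq:elastica_ODE_fourthorder}, i.e.\ is a $\lambda$-elastica; analyticity then follows from the explicit elliptic-function formulae, but $C^\infty$ is all that is claimed.

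The main obstacle is the very first step: making rigorous sense of the equation $2\tilde\gamma'''=3|\tilde\gamma''|^2\tilde\gamma'-\lambda\tilde\gamma'+C$ starting only from $\tilde\gamma\in W^{2,2}$, where a priori $\tilde\gamma''$ is merely $L^2$ and the nonlinearity $|\tilde\gamma''|^2$ is merely $L^1$. The point to be careful about is that the weak identity only a priori controls $\tilde\gamma''''$ as a distribution of order $\le 1$ acting against test functions, and one must integrate this up correctly in one dimension to conclude $\tilde\gamma'''\in L^1$ with an honest pointwise a.e.\ representative, using that in dimension one a distribution whose derivative is an $L^1$ function is itself (equal a.e.\ to) an absolutely continuous function. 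After that first integration the bootstrap is routine: each pass through the identity trades $C^k$ for $C^{k+1}$ because differentiation of $|\tilde\gamma''|^2\tilde\gamma'$ only costs one derivative and we always have one to spare.
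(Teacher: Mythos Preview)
Your proposal is correct and follows essentially the same route as the paper: pass to the arclength parametrization, read off the distributional identity $(2\tilde\gamma'')''+(3|\tilde\gamma''|^2\tilde\gamma'-\lambda\tilde\gamma')'=0$, integrate once to obtain $2\tilde\gamma'''=3|\tilde\gamma''|^2\tilde\gamma'-\lambda\tilde\gamma'+C$ with right-hand side in $L^1$, and then bootstrap. The only cosmetic difference is that the paper iterates in the scale $W^{m,\infty}$ while you iterate in $C^k$; both work for the same reason.
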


\begin{proof}
    First variation formulae \eqref{eq:FVLgeneral} and \eqref{eq:FVBgeneral} can also be verified in the Sobolev framework (via standard theory for interchanging differentiation and integration).
    Then the change of variables using $\sigma$ in Lemma \ref{lem:arclength_reparametrization_Sobolev} implies \eqref{eq:FVLunit} and \eqref{eq:FVBunit}.

    Using equations \eqref{eq:FVLunit} and \eqref{eq:FVBunit} for all $\eta\in C_c^\infty(J;\mathbf{R}^n)$, we find that 
    \[
    (2\tilde{\gamma}'')''+(3|\tilde{\gamma}''|^2\tilde{\gamma}'-\lambda\tilde{\gamma}')'=0
    \]
    in the distributional sense.
    Hence there is a constant vector $C\in\mathbf{R}^n$ such that
    \begin{equation}\label{eq:ODEdistribution}
      (2\tilde{\gamma}'')'+3|\tilde{\gamma}''|^2\tilde{\gamma}'-\lambda\tilde{\gamma}' -C =0
    \end{equation}
    holds in the distributional sense (see also Remark \ref{rem:distribution}).
    Since $\tilde{\gamma}\in W^{2,2}(J;\mathbf{R}^n)$, we have $|\tilde{\gamma}''|^2\in L^1(J)$ and $\tilde{\gamma}'\in L^\infty(J;\R^n)$ so that
    \[
    3|\tilde{\gamma}''|^2\tilde{\gamma}'-\lambda\tilde{\gamma}'-C\in L^1(J;\mathbf{R}^n).
    \] 
    Hence, equation \eqref{eq:ODEdistribution} implies that $\tilde{\gamma}'''\in L^1(J;\mathbf{R}^n)$, so that
    \[\tilde{\gamma}\in W^{3,1}(J;\mathbf{R}^n)\subset W^{2,\infty}(J;\mathbf{R}^n).\]
    (Now we can understand equation \eqref{eq:ODEdistribution} as weak derivatives.)
    Then
    \[
    3|\tilde{\gamma}''|^2\tilde{\gamma}'-\lambda\tilde{\gamma}'-C\in L^\infty(J;\mathbf{R}^n),
    \]
    and hence again by \eqref{eq:ODEdistribution} we have $\tilde{\gamma}\in W^{3,\infty}(J;\mathbf{R}^n)$.
    %Then $(3|\tilde{\gamma}''|^2\tilde{\gamma}'-\lambda\tilde{\gamma}')'=6(\tilde{\gamma}'',\tilde{\gamma}''')\tilde{\gamma}'+3|\tilde{\gamma}''|^2\tilde{\gamma}''-\lambda\tilde{\gamma}''\in L^\infty$ and hence by \eqref{eq:ODEdistribution} we deduce that $\tilde{\gamma}\in W^{4,\infty}$.
    In general, if $\tilde{\gamma}\in W^{m,\infty}(J;\mathbf{R}^n)$ with $m\geq 3$, then the lower order terms $3|\tilde{\gamma}''|^2\tilde{\gamma}'-\lambda\tilde{\gamma}'-C$ can be (weakly) differentiated $m-2$ times and the resulting function is still bounded, and hence by \eqref{eq:ODEdistribution} the same holds for $2\tilde{\gamma}'''$, that is, $\tilde{\gamma}\in W^{m+1,\infty}(J;\mathbf{R}^n)$.
    This implies that $\tilde{\gamma}\in W^{m,\infty}(J;\mathbf{R}^n)$ for all $m\geq1$, and hence $\tilde{\gamma}\in C^\infty(\bar{J};\mathbf{R}^n)$.
\end{proof}

\begin{remark}\label{rem:distribution}
    One may circumvent distribution theory just by deducing equation \eqref{eq:ODEdistribution} (in the weak derivative sense) directly from the following fact:
    If $u,v\in L^p(I)$ satisfy $\int_I u\phi''=\int_I v\phi'$ for all $\phi\in C_c^\infty(I)$, then $u\in W^{1,p}(I)$ and $u'=-v+C$ holds for some $C\in\R$.
    We give an argument for the reader's convenience.
    Fix any $\psi_0\in C^\infty_c(I)$ such that $\int_I\psi_0=1$.
    For any $\tilde{\phi}\in C^\infty_c(I)$, if we define 
    \[
    \phi(x):=\int_a^x\left( \Big( \int_I\tilde{\phi} \Big)\psi_0-\tilde{\phi}\right), 
    \]
    then $\phi\in C^\infty_c(I)$, and hence by assumption $\int_I u\phi''=\int_I v\phi'$ we have
    \[
    -\int_I u\tilde{\phi}' = \int_I \left( -\Big( \int_I u\psi_0' \Big) + \Big( \int_I v\psi_0 \Big) - v \right)\tilde{\phi}.
    \]
    Letting $C:=-( \int_I u\psi_0' ) + ( \int_I v\psi_0 )$ completes the proof.
\end{remark}

\subsection{Method of Lagrange multipliers}

Throughout this section we focus on the regularity of (local) minimizers, just for simplicity of the presentation.

\begin{theorem}[Regularity of clamped minimizers]\label{thm:regularity_minimizer}
    Suppose the same assumption as in Theorem \ref{thm:existence_minimizer}.
    Let $\gamma$ be a local minimizer of $B$ in $\mathcal{A}$.
    Then the arclength parametrization $\tilde{\gamma}$ of $\gamma$ is of class $C^\infty$, and satisfies the elastica equation in the classical sense.
\end{theorem}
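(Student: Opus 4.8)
The heading of this subsection indicates the route: the plan is to produce a Lagrange multiplier $\lambda\in\R$ for the length constraint and then appeal to Theorem~\ref{thm:regularity_critical}. Concretely, once we know that the local minimizer $\gamma\in\mathcal A\subset W^{2,2}_{\mathrm{imm}}(I;\R^n)$ satisfies
\[
\frac{d}{d\varepsilon}\big(B[\gamma+\varepsilon\eta]+\lambda L[\gamma+\varepsilon\eta]\big)\Big|_{\varepsilon=0}=0 \qquad\text{for all }\eta\in C^\infty_c(I;\R^n),
\]
Theorem~\ref{thm:regularity_critical} applies verbatim and gives that the arclength parametrization $\tilde\gamma$ is $C^\infty$ and is a $\lambda$-elastica in the classical sense.

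First I would record the \emph{constraint qualification}: the first variation of length does not vanish identically at $\gamma$. By the Sobolev version of \eqref{eq:FVLgeneral} one has $\frac{d}{d\varepsilon}L[\gamma+\varepsilon\eta]|_{\varepsilon=0}=\int_I\langle T,\eta'\rangle\,dx$ with $T=\gamma'/|\gamma'|\in C(\bar I;\R^n)$; if this vanished for every $\eta\in C^\infty_c(I;\R^n)$, then $T$ would be weakly, hence genuinely, constant, say $T\equiv T_*$, so that $P_1-P_0=\gamma(b)-\gamma(a)=\big(\int_a^b|\gamma'|\big)T_*=L_0T_*$ and thus $|P_0-P_1|=L_0$, contradicting the standing hypothesis $|P_0-P_1|<L_0$. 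Hence there is $\eta_0\in C^\infty_c(I;\R^n)$ with $\ell_0:=\frac{d}{d\varepsilon}L[\gamma+\varepsilon\eta_0]|_{\varepsilon=0}\neq0$.

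Next, for an arbitrary $\eta\in C^\infty_c(I;\R^n)$ I would form the two-parameter family $\mu_{\varepsilon,\delta}:=\gamma+\varepsilon\eta+\delta\eta_0$. Since $\eta$ and $\eta_0$ are compactly supported, every $\mu_{\varepsilon,\delta}$ satisfies the clamped boundary conditions, and since $W^{2,2}\hookrightarrow C^1$ with $\min_{\bar I}|\gamma'|>0$, each $\mu_{\varepsilon,\delta}$ is an immersion for $(\varepsilon,\delta)$ small. The map $F(\varepsilon,\delta):=L[\mu_{\varepsilon,\delta}]-L_0$ is smooth near $(0,0)$ (differentiation under the integral in \eqref{eq:def_length}, using $|\mu_{\varepsilon,\delta}'|\geq\frac12\min_{\bar I}|\gamma'|$), with $F(0,0)=0$ and $\partial_\delta F(0,0)=\ell_0\neq0$; the implicit function theorem then yields a $C^1$ function $\varepsilon\mapsto\delta(\varepsilon)$ with $\delta(0)=0$, $F(\varepsilon,\delta(\varepsilon))\equiv0$, and $\delta'(0)=-\ell_0^{-1}\frac{d}{d\varepsilon}L[\gamma+\varepsilon\eta]|_{\varepsilon=0}$. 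Thus $\varepsilon\mapsto\mu_{\varepsilon,\delta(\varepsilon)}$ is a path in $\mathcal A$ through $\gamma$ that converges to $\gamma$ in $W^{2,2}$, so by local minimality the function $g(\varepsilon):=B[\mu_{\varepsilon,\delta(\varepsilon)}]$ has a local minimum at $\varepsilon=0$, whence $g'(0)=0$. Computing $g'(0)$ via the chain rule (legitimate because $B$ is smooth along this finite-dimensional family, by the same differentiation-under-the-integral, and $\delta$ is $C^1$) gives
\[
0=\frac{d}{d\varepsilon}B[\gamma+\varepsilon\eta]\Big|_{\varepsilon=0}+\delta'(0)\,\frac{d}{d\delta}B[\gamma+\delta\eta_0]\Big|_{\delta=0}.
\]
Setting $\lambda:=-\ell_0^{-1}\frac{d}{d\delta}B[\gamma+\delta\eta_0]|_{\delta=0}$ — a constant independent of $\eta$ — and substituting the value of $\delta'(0)$, this is exactly the identity displayed in the first paragraph, and Theorem~\ref{thm:regularity_critical} finishes the proof.

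The main obstacle is genuinely only the constraint qualification, i.e.\ ruling out the degenerate (``abnormal'') possibility that $L$ itself is stationary at $\gamma$; this is precisely where $|P_0-P_1|<L_0$ enters, and it fails only for the trivial straight segment, which is not in $\mathcal A$. The rest — smoothness of $F$ and of $\varepsilon\mapsto B[\mu_{\varepsilon,\delta}]$ in the finite-dimensional parameters, persistence of immersedness and of the boundary conditions under small compactly supported perturbations, and the applicability of the chain rule — is routine bookkeeping of the same type already used in the proof of Theorem~\ref{thm:regularity_critical}.
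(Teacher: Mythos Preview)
Your proof is correct and follows the same overall strategy as the paper: verify a constraint qualification, extract a Lagrange multiplier $\lambda$, and then invoke Theorem~\ref{thm:regularity_critical}. The difference lies in how the multiplier is produced. The paper works in the Banach space $X=W^{2,2}_0(I;\R^n)$, uses Lemma~\ref{lem:Frechet_B_L} to see that $B$ and $L$ are $C^1$ Fr\'echet differentiable there, notes that $DL[\gamma]\neq0$ (since $\gamma$ is not a segment, exactly your constraint qualification), and then simply cites the abstract Lagrange multiplier theorem (Theorem~\ref{thm:multiplier}). You instead give a hands-on derivation: fix one $\eta_0$ moving the length, build a two-parameter family, and apply the finite-dimensional implicit function theorem to correct the length, thereby constructing admissible competitors and reading off $\lambda$ from the first-order condition.

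Your route is more elementary and self-contained --- it avoids Fr\'echet calculus and the general multiplier theorem, and it makes the role of the hypothesis $|P_0-P_1|<L_0$ very explicit. The paper's route is shorter and more structural, and it transfers verbatim to the pinned case (Theorem~\ref{thm:regularity_minimizer_pinned}) by merely enlarging $X$, which your argument also does but slightly less transparently. Both arguments are standard proofs of the same fact.
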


To prove this we recall the well-known method of Lagrange multipliers.

\begin{theorem}[Lagrange multiplier]\label{thm:multiplier}
    Let $X$ be a (real) Banach space and $U\subset X$ be an open set.
    Let $f,g_1,\dots,g_m\in C^1(U;\R)$, where $m\geq1$.
    If $x_0\in U$ is a local minimizer of $f$ on the set
    \[
    M:=\{x\in U \mid g_1(x)=\dots=g_m(x)=0 \},
    \]
    then there exists a nonzero vector $(\lambda_0,\lambda_1,\dots,\lambda_m)\in\R^{m+1}\setminus\{0\}$ such that
    \[
    \lambda_0 Df(x_0) + \sum_{j=1}^m \lambda_j Dg_j(x_0)=0.
    \]
    In addition, if $Dg_1(x_0),\dots,Dg_m(x_0)$ are linearly independent in the dual space $X^*$, then we can take $\lambda_0=1$, and in this case $\lambda_1,\dots,\lambda_m$ are uniquely determined.
\end{theorem}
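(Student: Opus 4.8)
The plan is to treat the degenerate case separately and then reduce the nondegenerate case to an ordinary one-variable minimization by constructing suitable curves inside the constraint set $M$. First I would dispose of the case that $Dg_1(x_0),\dots,Dg_m(x_0)$ are linearly dependent in $X^*$: then there is a nonzero $(\lambda_1,\dots,\lambda_m)\in\R^m$ with $\sum_{j=1}^m\lambda_j Dg_j(x_0)=0$, so taking $\lambda_0=0$ already yields a nonzero vector with the required property. From now on assume the $Dg_j(x_0)$ are linearly independent. Writing $G:=(g_1,\dots,g_m):U\to\R^m$, this is equivalent to surjectivity of $DG(x_0):X\to\R^m$, whose kernel I denote $Y:=\bigcap_{j=1}^m\ker Dg_j(x_0)$. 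I would record the elementary linear-algebra fact that any $\phi\in X^*$ annihilating $Y$ lies in $\mathrm{span}\{Dg_1(x_0),\dots,Dg_m(x_0)\}$ (factor $\phi$ through the map $x\mapsto(Dg_j(x_0)x)_{j}\in\R^m$ and represent the induced functional on $\R^m$ in coordinates). By this fact it suffices to prove $Df(x_0)|_Y=0$; the resulting representation $Df(x_0)=\sum_j\mu_j Dg_j(x_0)$ then gives the conclusion with $\lambda_0=1$, and uniqueness of the $\mu_j$ — hence of $\lambda_1,\dots,\lambda_m$ once $\lambda_0=1$ — follows again from linear independence.

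The heart of the argument is showing $Df(x_0)|_Y=0$. Suppose not, and pick $v\in Y$ with $Df(x_0)v\neq0$. Since $DG(x_0)$ is a bounded surjection onto the finite-dimensional space $\R^m$, its kernel is complemented: choosing $x_1,\dots,x_m$ with $DG(x_0)x_i=e_i$ and setting $Z:=\mathrm{span}\{x_1,\dots,x_m\}$ gives $X=Y\oplus Z$ with $DG(x_0)|_Z:Z\to\R^m$ an isomorphism. I would then apply the implicit function theorem, in the finite-dimensional variables $(t,z)\in\R\times Z$, to $\Phi(t,z):=G(x_0+tv+z)$ near $(0,0)$: here $\Phi(0,0)=0$ and $\partial_z\Phi(0,0)=DG(x_0)|_Z$ is invertible, so there is a $C^1$ map $t\mapsto z(t)$ with $z(0)=0$ and $G(x_0+tv+z(t))=0$ for small $|t|$. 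Thus $\gamma(t):=x_0+tv+z(t)$ is a $C^1$ curve in $M$ through $x_0$; differentiating the constraint at $t=0$ and using $v\in\ker DG(x_0)$ together with injectivity of $DG(x_0)|_Z$ forces $z'(0)=0$, so $\gamma'(0)=v$. Since $x_0$ is a local minimizer of $f$ on $M$, the $C^1$ function $h(t):=f(\gamma(t))$ has a local minimum at $t=0$, whence $0=h'(0)=Df(x_0)v\neq0$, a contradiction.

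The main thing to be careful about is precisely this curve construction in the nondegenerate case — realizing the purely linear object $\ker DG(x_0)$ by honest $C^1$ curves lying in $M$ with prescribed initial velocity. This step relies on the splitting $X=Y\oplus Z$ into the kernel of the surjection and a finite-dimensional topological complement, and on the implicit function theorem applied in the complement variable; once this is in place, everything else (the degenerate case, the functional-factoring lemma, the computation $\gamma'(0)=v$, and uniqueness of the multipliers) is routine.
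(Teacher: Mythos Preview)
Your argument is correct and follows the standard route to Lagrange multipliers in Banach spaces: in the degenerate case take $\lambda_0=0$, and in the regular case split off a finite-dimensional complement $Z$ to $\ker DG(x_0)$, apply the implicit function theorem in the $(t,z)$ variables to produce $C^1$ curves in $M$ with arbitrary prescribed tangent $v\in\ker DG(x_0)$, and conclude $Df(x_0)|_{\ker DG(x_0)}=0$. The factoring lemma and the uniqueness claim are handled cleanly.

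There is nothing to compare against: the paper does not give its own proof of this theorem but simply cites \cite[p.~142]{Ward2001} (and \cite[Prop.~43.21]{Zeid3} for the critical-point version). Your write-up would in fact supply the missing details the paper defers to the literature.
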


The proof can be found in \cite[p.142]{Ward2001} for example.
An analogous property is also true for general critical points \cite[Prop 43.21]{Zeid3}.

To apply this method to our elastica problem, we confirm the following fact.

\begin{lemma}\label{lem:Frechet_B_L}
    Let $X\subset W^{2,2}(I;\R^n)$ be a closed subspace and $U\subset X\cap W^{2,2}_\mathrm{imm}(I;\R^n)$ be an open subset of $X$.
    Then the bending energy $B:\gamma\mapsto\int_I|\kappa|^2ds$ and the length functional $L:\gamma\mapsto\int_Ids$ are elements of $C^1(U;\R)$.
    In addition, their Fr\'{e}chet derivatives at $\gamma\in U$ are given by, for $\eta\in X$,
    \begin{align*}
        DB[\gamma](\eta) &= \int_I \left( 2\langle \gamma_{ss},\eta_{ss} \rangle - 3|\gamma_{ss}|^2\langle \gamma_s,\eta_s \rangle \right) ds,\\
        DL[\gamma](\eta) &= \int_I \langle \gamma_s,\eta_s \rangle ds.
    \end{align*}
\end{lemma}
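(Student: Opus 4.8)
The plan is to verify the Fr\'echet differentiability directly from the explicit formulae \eqref{eq:def_bending2} and \eqref{eq:def_length} for $B$ and $L$ as integrals of the derivatives $\gamma'$, $\gamma''$, exploiting that on the open set $U$ one has a uniform lower bound $|\gamma'|\geq\delta>0$ locally in $X$. First I would fix $\gamma\in U$ and choose $\rho>0$ so small that the ball $B_\rho(\gamma)\subset U$ and $\inf_{x\in\bar I,\ \zeta\in B_\rho(\gamma)}|\zeta'(x)|\geq\delta>0$; this is possible because $W^{2,2}\hookrightarrow C^1$, so the map $\zeta\mapsto\min_{\bar I}|\zeta'|$ is continuous on $X$. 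On this ball the integrands in \eqref{eq:def_bending2}, namely rational expressions in $\gamma',\gamma''$ with denominators that are positive powers of $|\gamma'|\geq\delta$, are smooth functions of $(\gamma',\gamma'')\in\R^n\times\R^n$ away from $\gamma'=0$, with all partial derivatives bounded on the compact region $\{|p|\geq\delta,\ |p|\leq M\}$ (using $\|\zeta'\|_\infty\leq M$ on the ball).

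The key steps, in order: (1) write $B[\gamma]=\int_I F(\gamma',\gamma'')\,dx$ with $F(p,q)=|q|^2/|p|^3-\langle p,q\rangle^2/|p|^5$ and $L[\gamma]=\int_I|\gamma'|\,dx$; (2) for $\eta\in X$ and small $t$, expand $F(\gamma'+t\eta',\gamma''+t\eta'')$ pointwise by Taylor's theorem to first order, getting $F(\gamma',\gamma'')+t\,\big(\partial_pF\cdot\eta'+\partial_qF\cdot\eta''\big)+R(x,t)$ with $|R(x,t)|\leq C t^2(|\eta'(x)|^2+|\eta''(x)|^2)$, the constant $C$ depending only on $\delta$ and $M$; (3) integrate, divide by $t$, and let $t\to0$: the remainder contributes $O(t)\|\eta\|_{W^{2,2}}^2\to0$, which shows G\^ateaux differentiability with derivative $\eta\mapsto\int_I(\partial_pF\cdot\eta'+\partial_qF\cdot\eta'')\,dx$; (4) compute $\partial_pF$ and $\partial_qF$ explicitly and check that this linear functional coincides, after rewriting in arclength form as in the passage from \eqref{eq:FVBgeneral} to \eqref{eq:FVBunit}, with the claimed expression $\int_I(2\langle\gamma_{ss},\eta_{ss}\rangle-3|\gamma_{ss}|^2\langle\gamma_s,\eta_s\rangle)\,ds$; (5) show the G\^ateaux derivative $\gamma\mapsto DB[\gamma]$ is continuous from $U$ into $X^*$, which upgrades G\^ateaux to Fr\'echet differentiability and gives $B\in C^1(U;\R)$. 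The argument for $L$ is identical but easier, with integrand $|p|$, smooth for $|p|\geq\delta$.

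For step (5), continuity of $\gamma\mapsto DB[\gamma]$ follows because if $\gamma_k\to\gamma$ in $X$ then $\gamma_k\to\gamma$ in $C^1$ and $\gamma_k''\to\gamma''$ in $L^2$; the coefficient functions $\partial_pF(\gamma_k',\gamma_k'')$ and $\partial_qF(\gamma_k',\gamma_k'')$ are, respectively, bounded-times-$L^2$ and bounded-times-$L^\infty$ combinations that converge in the relevant norms (using dominated convergence and the uniform bounds from the compact region), so $\sup_{\|\eta\|_{W^{2,2}}\leq1}|DB[\gamma_k](\eta)-DB[\gamma](\eta)|\to0$ by Cauchy--Schwarz. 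For the arclength rewriting in step (4) one uses exactly the orthogonality identity $\langle\gamma',\gamma''\rangle=(\tfrac12|\gamma'|^2)'$ and the substitution $ds=|\gamma'|dx$ already invoked in the excerpt between \eqref{eq:FVBgeneral} and \eqref{eq:FVBunit}; alternatively one verifies the two expressions agree under the reparametrization of Lemma \ref{lem:arclength_reparametrization_Sobolev}.

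I expect the main obstacle to be step (3), the control of the Taylor remainder: one must be careful that $F$ and its first derivatives are only \emph{locally} Lipschitz on $\{p\neq0\}$, so the quadratic bound on $R$ genuinely relies on staying inside the ball $B_\rho(\gamma)$ where $|\zeta'|\geq\delta$ and $\|\zeta'\|_\infty\leq M$ hold uniformly, and on the fact that $W^{2,2}\hookrightarrow C^1$ gives such a ball in the $X$-topology. The remaining work --- computing $\partial_pF,\partial_qF$ and matching with the stated formula --- is a routine but slightly lengthy calculation that I would only sketch, pointing to the already-performed passage from \eqref{eq:FVBgeneral} to \eqref{eq:FVBunit} as the template.
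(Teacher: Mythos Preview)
Your approach is essentially the paper's: compute the G\^ateaux derivative (which the paper simply cites from \eqref{eq:FVLgeneral}--\eqref{eq:FVBgeneral}), verify that $\gamma\mapsto DB[\gamma]$ is continuous from $U$ into $X^*$, and conclude Fr\'echet differentiability from continuous G\^ateaux differentiability. The paper's proof is three sentences; you have filled in the details it leaves implicit.

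One correction to your step (3): the pointwise bound $|R(x,t)|\leq Ct^2(|\eta'(x)|^2+|\eta''(x)|^2)$ with $C=C(\delta,M)$ is not valid, because the Hessian $D^2F(p,q)$ is \emph{not} bounded on $\{\delta\leq|p|\leq M\}$ --- the block $\partial_{pp}^2F$ grows like $|q|^2$ and $\partial_{pq}^2F$ like $|q|$, since $F$ is quadratic in $q$. The correct pointwise remainder carries factors such as $(|\gamma''(x)|+t|\eta''(x)|)^2|\eta'(x)|^2$. This does not damage the argument: after integrating over $I$ and using $\|\eta'\|_\infty\leq C\|\eta\|_{W^{2,2}}$ together with $\gamma''\in L^2(I;\R^n)$, the integrated remainder is still $O(t^2)\|\eta\|_{W^{2,2}}^2$, so G\^ateaux differentiability follows as you claim. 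Just be careful to state the intermediate bound correctly.
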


\begin{proof}
    The G\^ateaux derivatives of $B$ and $L$ are given by the above formulae, or more precisely as in \eqref{eq:FVBgeneral} and \eqref{eq:FVLgeneral}, as already indicated in the proof of Theorem \ref{thm:regularity_critical}.
    It is straightforward to verify that the integrals in \eqref{eq:FVBgeneral} and \eqref{eq:FVLgeneral} are continuous in the $W^{2,2}$-topology as bounded linear operators; for example, letting $T_\gamma:\eta\mapsto \frac{d}{d\varepsilon}B[\gamma+\varepsilon\eta]|_{\varepsilon=0}$, then $T_\gamma\in \mathcal{B}(X,\R)$ and $\|T_{\gamma_n}-T_{\gamma}\|_{\mathcal{B}(X,\R)}\to0$ as $\|\gamma_n-\gamma\|_{W^{2,2}}\to0$.
    Therefore, those G\^ateaux derivatives are also Fr\'{e}chet derivatives, and the functionals $B$ and $L$ are of class $C^1$.
\end{proof}

\begin{proof}[Proof of Theorem \ref{thm:regularity_minimizer}]
    Let $\gamma\in\mathcal{A}$ be a local minimizer.
    Let $X:=W^{2,2}_0(I;\R^n)$, the closure of $C_c^\infty(I;\R^n)$ in $W^{2,2}(I;\R^n)$.
    Recall that any $\eta\in X$ satisfies $\eta=\eta'=0$ at the endpoints $\partial I$.
    Then the functions $f(\eta):=B[\gamma+\eta]$ and $g(\eta):=L[\gamma+\eta]-L_0$ are well-defined and of class $C^1$ for any $\eta\in U$ in a small neighborhood $U$ of $0$ in $X$.
    In addition, since $\gamma+\eta\in\mathcal{A}$ for $\eta\in X$, the origin $0\in X$ is a local minimizer of $f$ in $X\cap\{g=0\}$.
    Note also that $Dg(0)=DL[\gamma]\neq0$ since $\gamma$ is not a segment and $DL[\gamma]$ is given as in Lemma \ref{lem:Frechet_B_L}.
    Hence, by Theorem \ref{thm:multiplier} with $m=1$, there is $\lambda\in\R$ such that $Df(0)+\lambda Dg(0)=0$; by Lemma \ref{lem:Frechet_B_L}, for any $\eta\in X$,
    \[
    DB[\gamma](\eta)+\lambda DL[\gamma](\eta)=0.
    \]
    This in particular implies that $\gamma$ satisfies the assumption of Theorem \ref{thm:regularity_critical}, and hence has the desired regularity.
\end{proof}

The same argument also works for the pinned boundary condition.
In this case, due to the extra freedom at the endpoints, we further deduce an additional boundary condition on the curvature.

\begin{theorem}[Regularity of pinned minimizers]\label{thm:regularity_minimizer_pinned}
    Suppose the same assumption as in Theorem \ref{thm:existence_minimizer_pinned}.
    Let $\gamma$ be a local minimizer of $B$ in $\mathcal{A}'$.
    Then the arclength parametrization $\tilde{\gamma}$ of $\gamma$ is of class $C^\infty$, and satisfies the elastica equation in the classical sense.
    In addition, $\kappa=0$ holds at the endpoints.
\end{theorem}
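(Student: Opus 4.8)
The plan is to repeat the Lagrange-multiplier argument behind Theorem \ref{thm:regularity_minimizer}, but on a larger space of perturbations that respects only the positional constraint of $\mathcal{A}'$, and then to read the natural boundary condition $\kappa=0$ off the boundary terms produced by integration by parts.

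First I would take
\[
X:=\{\eta\in W^{2,2}(I;\R^n)\mid \eta(a)=\eta(b)=0\},
\]
a closed subspace of $W^{2,2}(I;\R^n)$ strictly larger than $W^{2,2}_0(I;\R^n)$. For $\eta$ in a small neighbourhood $U$ of $0$ in $X$ the curve $\gamma+\eta$ is immersed, and it belongs to $\mathcal{A}'$ exactly when $L[\gamma+\eta]=L_0$; hence $0$ is a local minimizer of $f(\eta):=B[\gamma+\eta]$ on $\{\eta\in U\mid g(\eta)=0\}$ with $g(\eta):=L[\gamma+\eta]-L_0$. By Lemma \ref{lem:Frechet_B_L} these are $C^1$, and $Dg(0)=DL[\gamma]\neq0$ since $\gamma$ is not a segment, so Theorem \ref{thm:multiplier} with $m=1$ supplies $\lambda\in\R$ with
\[
DB[\gamma](\eta)+\lambda DL[\gamma](\eta)=0\qquad\text{for all }\eta\in X.
\]
Restricting this to $\eta\in C^\infty_c(I;\R^n)\subset X$ puts $\gamma$ under the hypotheses of Theorem \ref{thm:regularity_critical}, so the arclength parametrization $\tilde\gamma$ is $C^\infty$ and solves the elastica equation classically. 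Passing to this arclength parametrization via Lemma \ref{lem:arclength_reparametrization_Sobolev} --- which changes neither $B$, $L$ nor the displayed identity, and maps $X$ bijectively onto the space of $W^{2,2}$ fields on the new interval vanishing at its endpoints --- I may from now on assume that $\gamma$ itself is a smooth unit-speed $\lambda$-elastica on $\bar I$.

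Second, I would plug into the displayed identity smooth fields $\eta\in X$ whose first derivatives at $a$ and $b$ are left unconstrained. Since $\gamma$ is now smooth, the geometric integration-by-parts formulae of Lemma \ref{lem:first_variation_normal} apply; using $\eta(a)=\eta(b)=0$ to annihilate every boundary term that contains $\eta$ itself, \eqref{eq:FVLintegralbyparts_BC_curvature} and \eqref{eq:FVBintegralbyparts_BC_curvature} reduce the identity to
\[
0=\int_I\big\langle 2\nabla_s^2\kappa+|\kappa|^2\kappa-\lambda\kappa,\ \eta\big\rangle\,ds+\big[\,2\langle\kappa,\eta'\rangle\,\big]_a^b .
\]
The interior integrand vanishes identically because $\gamma$ solves the elastica equation \eqref{eq:elastica_ODE_general_dim}, so $[\,2\langle\kappa,\eta'\rangle\,]_a^b=0$ for all such $\eta$. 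Choosing $\eta$ smooth, vanishing near one endpoint and with an arbitrarily prescribed derivative at the other, we see that $\eta'(a)$ and $\eta'(b)$ sweep out all of $\R^n$ independently, whence $\kappa(a)=\kappa(b)=0$.

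I do not expect a genuine obstacle here; the only delicate points are bookkeeping. One must establish interior smoothness first (via Theorem \ref{thm:regularity_critical}) before invoking the smooth-curve first-variation formulae, and one must be sure the Lagrange-multiplier identity holds on all of $X$, not merely on $W^{2,2}_0(I;\R^n)$, so that test fields with nonzero endpoint derivatives are genuinely admitted --- both are immediate with the tools already developed. Note that the analogous computation for the clamped problem is vacuous: there $\eta'$ is also forced to vanish at $a,b$, which is precisely why Theorem \ref{thm:regularity_minimizer} carries no extra boundary condition.
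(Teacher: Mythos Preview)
Your proposal is correct and essentially coincides with the paper's own argument. Your space $X=\{\eta\in W^{2,2}(I;\R^n)\mid \eta(a)=\eta(b)=0\}$ is exactly the paper's $X'=W^{2,2}(I;\R^n)\cap W^{1,2}_0(I;\R^n)$, and the only cosmetic difference is that you invoke the geometric first-variation formulae \eqref{eq:FVLintegralbyparts_BC_curvature}--\eqref{eq:FVBintegralbyparts_BC_curvature} while the paper cites the equivalent raw versions \eqref{eq:FVLintegralbyparts_BC}--\eqref{eq:FVBintegralbyparts_BC}; your explicit remark about passing to the arclength parametrization before using the smooth-curve formulae is a point the paper leaves implicit.
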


\begin{proof}
    Arguing similarly to Theorem \ref{thm:regularity_minimizer} by replacing $X=W^{2,2}_0(I;\R^n)$ with $X':=W^{2,2}(I;\R^n)\cap W^{1,2}_0(I;\R^n)$, we find that for any $\eta\in X'$,
    \[
    DB[\gamma]+\lambda DL[\gamma]=0.
    \]
    Hence again the desired regularity holds.
    Not only that, in view of \eqref{eq:FVLintegralbyparts_BC}, \eqref{eq:FVBintegralbyparts_BC}, and \eqref{eq:elastica_ODE_fourthorder}, integration by parts yields
    \[
    [2\langle \gamma_{ss},\eta_s \rangle]_a^b=0
    \]
    for all $\eta\in X$.
    (Note that we always have $\eta(a)=\eta(b)=0$.)
    Since $\eta_s(a)$ and $\eta_s(b)$ may be arbitrary, we deduce that $\gamma_{ss}(a)=\gamma_{ss}(b)=0$.
\end{proof}

\section{Li--Yau type inequality and related problems}\label{sec:Li-Yau}

\subsection{Li--Yau type inequality}

Armed with the contents of the previous sections, we are now ready to prove the most important step towards Theorem \ref{thm:intro_Li-Yau}.

\begin{theorem}[Unique minimality of the leaf \cite{Miura_LiYau}]\label{thm:leaf_minimality}
    Let $\gamma\in W^{2,2}_\mathrm{imm}(I;\R^n)$ satisfy $\gamma(a)=\gamma(b)$.
    Let $\gamma^*$ be the half-fold figure-eight elastica (leaf) defined by $\gamma^*(s):=\gamma_w(s-K(m^*),m^*)$ for $s\in[0,2K(m^*)]$.
    Then 
    \[
    \bar{B}[\gamma]\geq \bar{B}[\gamma^*],
    \]
    where equality holds if and only if $\gamma$ coincides with $\gamma^*$ up to similarity and reparameterization.
\end{theorem}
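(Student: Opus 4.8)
The plan is to reduce the minimization of $\bar B$ over the class of closed loops $\{\gamma\in W^{2,2}_{\mathrm{imm}}(I;\R^n):\gamma(a)=\gamma(b)\}$ to an analysis of the pinned minimizer, which by the results of Section~\ref{sec:elastica_anal} must be a smooth elastica with vanishing curvature at the endpoints. First I would fix the endpoints: by scaling invariance of $\bar B$ (Lemma on scaling) and Euclidean invariance, it suffices to minimize among curves with $\gamma(a)=\gamma(b)=0$ and prescribed length, say $L=2K(m^*)$, so the problem becomes a \emph{pinned} problem with $P_0=P_1=0$. (One must observe that although $|P_0-P_1|=0<L_0$ so Theorem~\ref{thm:existence_minimizer_pinned} applies, the minimizer cannot be the straight segment, which is excluded by the closedness constraint; more importantly, the infimum over this degenerate-endpoint class coincides, after optimizing over the placement of the double point, with the infimum of $\bar B$ over all closed loops with a point of multiplicity~$2$.) Then a minimizer $\bar\gamma$ exists, and by Theorem~\ref{thm:regularity_minimizer_pinned} its arclength parametrization is a smooth $\lambda$-elastica with $\kappa=0$ at both endpoints.

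Next I would exploit the endpoint condition $\kappa=0$ together with Corollary~\ref{cor:uniqueness_dim_rigidity}(iii): since the curvature vanishes at a point, the minimizer is at most two-dimensional, i.e.\ planar. Now I can bring in the explicit classification. By Theorem~\ref{thm:curvature_formula_2D} the signed curvature of a planar elastica with an interior (or endpoint) zero is of wavelike type ($k=\pm A\cn(\alpha s+\beta,m)$) or borderline type ($k=\pm A\sech$); the orbitlike, circular, and linear cases are excluded either because $k$ never vanishes (orbitlike/circular) or because $B=0$ is incompatible with $\gamma(a)=\gamma(b)$ for an immersed non-constant curve (linear). So $\bar\gamma$, up to similarity and reparametrization, is $\gamma_w(\cdot+s_0,m)$ for some $m\in(0,1]$ (with $m=1$ the borderline case as a limit), restricted to an interval between two consecutive zeros of $k_w$; by the normalization $k^*(0)=0$ choosing the interval $[K(m),3K(m)]$ (length $2K(m)$) and shifting, this is $\gamma_w(s-K(m),m)$ on $[0,2K(m)]$, whose endpoints coincide precisely when \eqref{eq:wave_periodicity} gives $\gamma_w(2K(m))-\gamma_w(0)$ proportional to $(2E(m)-K(m),0)$ — wait, over a half-period the closure condition must be re-derived directly: I would compute $\gamma_w(3K(m),m)-\gamma_w(K(m),m)=(2(2E(m)-K(m)),0)$ using the definition of $\gamma_w$ and the quasi-periodicity of $E(\am(\cdot,m),m)$, so the loop closes iff $2E(m)-K(m)=0$, i.e.\ $m=m^*$ by Lemma~\ref{lem:elliptic_integral_wavelike}. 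This identifies the minimizer as exactly the leaf $\gamma^*$, giving both the inequality and the equality characterization.

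The main obstacle I anticipate is the \emph{reduction step}: showing that minimizing $\bar B$ over closed loops (no first-order condition at the self-intersection) genuinely reduces to the pinned problem with coincident endpoints, and in particular that one may cut the loop at the double point to obtain an admissible pinned competitor while not losing the equality case. Concretely, a closed loop $\gamma:\R/\Z\to\R^n$ with a point of multiplicity~$2$ can be cut into a single arc $\gamma|_{[a,b]}$ with $\gamma(a)=\gamma(b)$, so $\bar B[\gamma]=L[\gamma]B[\gamma]$ is literally $\bar B$ of that arc; thus the inequality for arcs \emph{is} the statement. The delicate point is rather that the pinned minimizer among \emph{all} arcs with $\gamma(a)=\gamma(b)$ — without the constraint that the tangents match — could a priori be an arc that does not extend to a $C^1$ closed curve; but Theorem~\ref{thm:leaf_minimality} is stated precisely for arcs, so this is not needed, and the genuine content is just: \emph{among immersed $W^{2,2}$ arcs with equal endpoints, $\bar B$ is uniquely minimized by the leaf}. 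With that framing the only real work is the classification argument above, plus checking $\bar B[\gamma^*]$ equals the claimed value by direct evaluation of $L[\gamma^*]=2K(m^*)$ and $B[\gamma^*]=\int_0^{2K(m^*)}4m^*\cn^2(s-K(m^*),m^*)\,ds=8(2E(m^*)-(1-m^*)K(m^*))=8E(m^*)$ (using $2E(m^*)=K(m^*)$), whence $\bar B[\gamma^*]=16E(m^*)K(m^*)$; one then reconciles this with $\varpi^*=32(2m^*-1)E(m^*)$ via $K(m^*)=2E(m^*)$ and the identity relating $(2m^*-1)$ to the elliptic integrals at $m^*$, which is a routine but essential consistency check. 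A secondary subtlety is handling the borderline limit $m\to1$ cleanly (the arc degenerates, so it never attains the minimum), and confirming that no \emph{shorter} sub-arc between non-consecutive zeros of $k_w$, or a multiply-covered configuration, can do better — this follows because covering multiplies both $L$ and $B$, strictly increasing $\bar B$.
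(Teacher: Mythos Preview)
Your approach matches the paper's almost step for step: normalize to the pinned problem with $P_0=P_1=0$, invoke Theorems~\ref{thm:existence_minimizer_pinned} and~\ref{thm:regularity_minimizer_pinned} to obtain a smooth elastica with $\kappa=0$ at the endpoints, use Corollary~\ref{cor:uniqueness_dim_rigidity}(iii) to force planarity, feed into the planar classification to land in the wavelike case, and then read off $m=m^*$ from the closure condition via Lemma~\ref{lem:elliptic_integral_wavelike}. Two small corrections: the borderline case is excluded \emph{directly} because $k_b(s)=2\sech s$ never vanishes (so the endpoint condition $\kappa=0$ already rules it out---no limiting argument is needed), and the paper handles the multiple-half-period possibility by writing the candidate as $\gamma_w(s-K(m),m)$ on $[0,2K(m)N]$ for a positive integer $N$ from the outset and then eliminating $N>1$ by energy (your $\bar B\mapsto N^2\bar B$ observation), rather than defaulting to consecutive zeros and patching afterwards.
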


\begin{proof}
    Without loss of generality we may temporarily assume that $\gamma(a)=\gamma(b)=0$ and $L[\gamma]=1$.
    By Theorem \ref{thm:regularity_minimizer_pinned} there is a minimizer $\bar{\gamma}$ of $B$ among all such curves, and $\bar{\gamma}$ is a smooth elastica with vanishing curvature at the endpoints.
    By Corollary \ref{cor:uniqueness_dim_rigidity} the elastica $\bar{\gamma}$ must be planar, so we may assume $n=2$.
    Hence $\bar{\gamma}$ falls into the classification in Theorem \ref{thm:planar_explicit}.
    Since $\bar{\gamma}$ cannot be a segment, the only possible case (allowing vanishing curvature) is Case II (wavelike elastica).
    Then, using the vanishing-curvature boundary condition as well as periodicity and symmetry, we deduce that there are $m\in(0,1)$ and a positive integer $N$ such that, up to similarity and reparametrization,
    \[
    \bar{\gamma}(s)=\gamma_w(s-K(m),m)
    \]
    for $s\in[0,2K(m)N]$.
    Directly computing the energy $\bar{B}$ and using periodicity, we deduce from energy minimality that $N=1$.
    In addition, Lemma \ref{lem:elliptic_integral_wavelike} with the fact that $0=\bar{\gamma}(2K(m))-\bar{\gamma}(0)=2(2E(m)-K(m))e_1$ implies that $m=m^*$.
\end{proof}

A direct computation (as in \cite[Lemma 2.5]{Miura_LiYau}) ensures that the energy of the leaf can be computed as
\[
    \bar{B}[\gamma^*]=\varpi^* \big( =32(2m^*-1)E(m^*) \big).
\]
With this fact, Theorem \ref{thm:leaf_minimality} is exactly same as \cite[Proposition 2.6]{Miura_LiYau}.
Once this is established, Theorem \ref{thm:intro_Li-Yau} follows by cutting the curve at the point of multiplicity and apply Theorem \ref{thm:leaf_minimality} to each piece.
The fact that minimizers have equal-length leaves follows by an easy application of the AM-HM inequality.
The non-optimality part is still delicate and depends on Andr\'e's theorem \cite{Andre1996}.
For more details, see the original paper \cite{Miura_LiYau}.

In what follows, we discuss some related topics and open problems.

We begin with a directly related open problem about the non-optimal case of Theorem \ref{thm:intro_Li-Yau}.

\begin{problem}\label{prob:odd_planar}
    What is the minimizer of the normalized bending energy $\bar{B}$ among closed planar curves $\gamma\in W^{2,2}_\textup{imm}(\R/\Z;\R^2)$ with a point of odd multiplicity $r\geq3$?
\end{problem}

\subsection{Elastic knot}

Elastic knot theory is studied as a more accurate mathematical model of elastic wires.
One mathematical formulation (via singular perturbation) is recently given by a remarkable work of Gerlach--Reiter--von der Mosel \cite{Gerlach2017}.
In their paper it is shown that the doubly-covered circle is an appropriate ``minimizer'' in the trefoil class, or even in a more general family of torus knot classes.

One of the most important open problems in this field would be the following conjecture, which was first stated by Gallotti and Pierre-Louis in 2007 \cite{Gallotti_PierreLouis_2007} and then mathematically formulated in \cite{Gerlach2017}.

\begin{conjecture}[Circular elastic knots {\cite[Conjecture 7.1]{Gerlach2017}}]
    The $N$-fold circle is the unique elastic knot (in the sense of \cite{Gerlach2017}) for any knot class whose braid index and bridge index coincide with $N$.
\end{conjecture}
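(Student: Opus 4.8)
Throughout, fix the knot class $\mathcal{K}$ with braid index and bridge index both equal to $N$, and recall that the elastic knot is obtained as a subsequential $W^{2,2}$-weak limit, as $\vartheta\to0^+$, of minimizers $\gamma_\vartheta$ of the regularized energy $\bar{B}+\vartheta\mathcal{R}$ over $\mathcal{K}$, where $\mathcal{R}$ is a self-avoidance functional (ropelength or tangent-point energy) that renders the minimization well-posed. After normalizing length, the plan is to show that every such limit is the $N$-fold round circle. The conceptual point is that the two hypotheses play complementary roles: the \emph{bridge index} supplies a lower bound on $\bar B$ while the \emph{braid index} supplies a matching upper bound, and these coincide precisely when both invariants equal $N$.

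First I would establish the lower bound. For any embedded $\gamma\in\mathcal{K}$, the Cauchy--Schwarz inequality gives $\bar B[\gamma]\ge TC[\gamma]^2$ exactly as in the round-circle theorem above, while Milnor's total curvature theorem together with Schubert's identification of crookedness with the bridge number yields $TC[\gamma]\ge 2\pi\,\beta(\mathcal{K})=2\pi N$. Hence $\inf_{\mathcal{K}}\bar B\ge 4\pi^2N^2$. For the upper bound, since the braid index is $N$, the class $\mathcal{K}$ admits a closed $N$-braid representative; realizing it as an embedded curve inside a thin solid torus around a fixed round core circle and letting the tube radius tend to $0$ produces a recovery sequence in $\mathcal{K}$ whose normalized bending energy tends to $4\pi^2N^2$ (the $N$-fold cover of a circle of radius $\rho$ has $\bar B=4\pi^2N^2$, independently of $\rho$). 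Using this recovery sequence as a competitor in $\bar B+\vartheta\mathcal{R}$ forces $\bar B[\gamma_\vartheta]\to4\pi^2N^2$, so by weak lower semicontinuity the limit $\gamma_0$ satisfies $\bar B[\gamma_0]\le 4\pi^2N^2$.

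The rigidity step is where the argument must be made sharp. One wants the reverse inequality $\bar B[\gamma_0]\ge 4\pi^2N^2$, after which equality in Cauchy--Schwarz forces $|\kappa_0|$ constant and the equality case of Fenchel/Milnor forces $\gamma_0$ to be a planar convex curve traversed with multiplicity, hence the $N$-fold round circle; uniqueness of the elastic knot then follows since every subsequential limit is identified. However, \emph{the total curvature lower bound does not transfer to the weak limit}: lower semicontinuity gives only $TC[\gamma_0]\le\liminf TC[\gamma_\vartheta]$, the wrong direction, so one cannot conclude directly that $\gamma_0$ still winds $N$ times. The main obstacle is therefore to prove that the covering multiplicity $N$ survives the singular limit---that knottedness collapses to exactly an $N$-fold cover and not to anything of lower turning. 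I expect this to require either a quantitative, stratified total-curvature estimate stable along the minimizing family $\{\gamma_\vartheta\}$, or a use of the residual penalization $\vartheta\mathcal{R}$ to prevent curvature and multiplicity from concentrating away in the limit. This retention-of-multiplicity problem is precisely what has so far been resolved only in low-complexity cases such as the torus knots $T(2,q)$ (where $N=2$) treated in \cite{Gerlach2017}, and a proof of the full conjecture appears to demand a new mechanism for controlling the limit at this step.
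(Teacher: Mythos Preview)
The statement you are addressing is a \emph{conjecture}, explicitly presented in the paper as one of the most important open problems in the field; the paper offers no proof whatsoever. So there is no ``paper's own proof'' to compare against, and your submission cannot be judged as a correct proof because the problem is open.

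That said, your outline is a coherent and well-informed heuristic sketch of why one expects the conjecture to be true, and you have correctly identified the genuine obstruction yourself. The lower bound $\bar B\ge 4\pi^2N^2$ via Cauchy--Schwarz and Milnor--Schubert is standard, and the upper bound via a thin-torus recovery sequence is also the natural construction. The real issue, as you say, is that total curvature is only lower semicontinuous under weak $W^{2,2}$ convergence, so the inequality $TC[\gamma_0]\ge 2\pi N$ cannot be inferred from the approximating sequence; nothing in your argument prevents the limit from degenerating to a cover of lower multiplicity (or some other non-circular limit) with $\bar B$ strictly below $4\pi^2N^2$. This ``retention of multiplicity'' is exactly the missing mechanism, and it is not a technicality: it is the heart of the conjecture, and the reason the result in \cite{Gerlach2017} is currently restricted to certain torus knot classes. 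Your proposal is therefore not a proof but an accurate diagnosis of where a proof would have to innovate.
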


On the other hand, non-circular elastic knots can also be realized experimentally.
For example, motivated by some previous works, in \cite{MMR23} we have posed the following

\begin{conjecture}[Elastic teardrop-heart \cite{MMR23}]
    The elastic teardrop-heart (obtained in \cite{MMR23}) is the unique elastic knot (in the sense of \cite{Gerlach2017}) for the figure-eight knot class.
\end{conjecture}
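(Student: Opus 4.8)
Since this statement is an \emph{open conjecture}, what follows is a proposed program rather than a complete proof, with an indication of where the essential difficulty lies. The plan is to work within the singular-perturbation framework of \cite{Gerlach2017}, in which the elastic knot for a prescribed knot class $\mathcal{K}$ is realized as a limit $\gamma_0=\lim_{\vartheta\to0}\gamma_\vartheta$ (along a subsequence, after rescaling and reparametrization) of minimizers $\gamma_\vartheta$ of a regularized energy $B+\vartheta R$, where $R$ is a self-avoidance functional and the minimization runs over immersed $W^{2,2}$ curves in the class $\mathcal{K}$; here $\mathcal{K}$ is the figure-eight knot class. The goal is to show that this limit is precisely the teardrop-heart of \cite{MMR23} and that it is unique.

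First I would establish uniform energy bounds and compactness. Using a smooth representative of the teardrop-heart as a competitor gives $\sup_\vartheta\bar{B}[\gamma_\vartheta]<\infty$, and then the compactness of the energy space (by the direct-method machinery underlying Theorem \ref{thm:existence_minimizer}) yields a closed limit $\gamma_0\in W^{2,2}(\R/\Z;\R^3)$. A crucial feature is that $\gamma_0$ need not be embedded: as the self-avoidance weight $\vartheta$ tends to $0$ the knot is free to tighten and develop self-contacts, so $\gamma_0$ is in general a closed curve \emph{with} self-intersections, i.e.\ a curve carrying points of multiplicity $r\geq2$.

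Next I would show that $\gamma_0$ is a (possibly self-intersecting) elastica — a length-constrained critical point of $B$ on the complement of its self-intersection set — so that each of its smooth arcs is subject to the elastica classification, for which dimensional rigidity (Corollary \ref{cor:uniqueness_dim_rigidity}) and the explicit planar parametrizations (Theorem \ref{thm:planar_explicit}) are the relevant tools. A sub-goal here is to prove that the optimal limit is in fact planar, as it is for the trefoil. Since the figure-eight knot is \emph{not} a torus knot, the limit cannot be a multiply-covered circle as in the trefoil case of \cite{Gerlach2017}; F\'ary--Milnor-type total-curvature estimates together with the Li--Yau type inequality (Theorem \ref{thm:intro_Li-Yau}) then bound $\bar{B}[\gamma_0]$ from below and, via the equality characterization by leafed elasticae, restrict the admissible limit shapes to a short list. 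The final step would match the teardrop-heart against this list and prove uniqueness, using the additional topological constraints imposed by the figure-eight class to exclude the remaining competitors.

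The hard part will be the middle step: rigorously controlling the topology of the limit as $\vartheta\to0$. One must prove that the figure-eight class degenerates to exactly the self-intersection structure realized by the teardrop-heart, and exclude every other self-intersection pattern of equal or smaller energy. This is a genuinely global question — the limit of the self-avoidance term is delicate, isotopy information can be lost in the passage to the limit, and, in contrast to torus knots, there is at present no structural theorem pinning down the optimal elastic degeneration of the figure-eight knot. This is exactly why the statement remains a conjecture.
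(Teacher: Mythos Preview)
You correctly identify that this statement is an open conjecture; the paper does not prove it and offers no proof sketch, so there is nothing in the paper to compare your proposal against. Your outline is a reasonable high-level program in the spirit of \cite{Gerlach2017}, and you are appropriately candid that the decisive step---controlling the topology of the degenerate limit and ruling out all competing self-intersection patterns for the figure-eight class---is the genuine obstruction that keeps the statement conjectural.
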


In addition, we expect that leafed elasticae (attaining equality of Theorem \ref{thm:intro_Li-Yau}) provide a new family of candidates of stable elastic knots.
In fact, the elastic propeller can be made by deforming a circular (unknotted) wire as in Figure \ref{fig:propeller_photo}.

In \cite{Miura_LiYau} the author conjectured that the elastic propeller is the stable elastic unknot of second smallest energy (the least energy one is clearly the circle).
To give a detailed statement, we need not use singular perturbation as in \cite{Gerlach2017} but only need to introduce a suitable closure of a knot class.
For a given (tame) knot class $K$, we define 
\[
\mathcal{K}:=\overline{\{\gamma\in W^{2,2}(\R/\Z;\R^3) \mid |\gamma'|\equiv1,\ \gamma\in K \}},
\]
the $W^{2,2}$-closure of the class of arclength parametrized closed curves which have unit length and belong to the knot class $K$.
In particular, taking the closure allows us to deal with self-intersecting curves.

% In this paper we define that $\gamma\in\mathcal{K}$ is a \emph{stable elastic knot (for the knot class $K$)} if $\gamma$ is a minimizer of the bending energy $B$ in a neighborhood $U$ of $\gamma$ in $\mathcal{K}$.
% We also define that $\gamma\in\mathcal{K}$ is a \emph{strictly} stable elastic knot if in addition $B[\xi]>B[\gamma]$ holds for any $\xi\in U$ that is not isometric to $\gamma$.
In this paper we define that $\gamma\in\mathcal{K}$ is a \emph{stable elastic knot (for the knot class $K$)} if there are a neighborhood $U$ of $\gamma$ in $\mathcal{K}$ and a connected component $U'$ of $U\cap K$ such that $\gamma\in \overline{U'}$ and $B[\gamma]=\min_{\overline{U'}}B$.
We also define that $\gamma\in\mathcal{K}$ is a \emph{strictly} stable elastic knot if in addition $B[\xi]>B[\gamma]$ holds for any $\xi\in\overline{U'}$ that is not isometric to $\gamma$.
Recall that by Langer--Singer's theorem, any stable elastic knot other than the circle must have a self-intersection (or equivalently, lie in the boundary of the class $\mathcal{K}$).

\begin{conjecture}[Elastic propeller \cite{Miura_LiYau}]
    The elastic propeller is a strictly stable elastic knot for the unknot class.
    In addition, among all stable elastic knots for the unknot class, the elastic propeller has the second smallest energy.
\end{conjecture}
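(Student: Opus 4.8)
We outline a possible route to this conjecture; both parts reduce, via Theorem~\ref{thm:intro_Li-Yau} and the Langer--Singer classification, to genuinely second-order problems that seem to be the source of the difficulty. Begin with stability of the propeller $P$. Since $\{\gamma\mid|\gamma'|\equiv1\}$ is closed under $C^1$-convergence and $W^{2,2}\hookrightarrow C^1$, every $\gamma\in\mathcal{K}$ satisfies $|\gamma'|\equiv1$, hence $L[\gamma]=1$ and $\bar{B}[\gamma]=B[\gamma]$; also $P\in\mathcal{K}$, being a $W^{2,2}$-limit of embedded unknots obtained by pushing a strand off the triple point (cf.\ Figure~\ref{fig:propeller_photo}), and $\bar{B}[P]=9\varpi^*$. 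Fix a small $W^{2,2}$-neighbourhood $U$ of $P$ in $\mathcal{K}$ and let $\xi\in U$ not be isometric to $P$. If $\xi$ has a point of multiplicity $r\geq3$, then Theorem~\ref{thm:intro_Li-Yau} gives $B[\xi]\geq\varpi^*r^2\geq9\varpi^*=B[P]$, and equality would force $r=3$ and $\xi$ to be the unique closed $3$-leafed elastica, i.e.\ isometric to $P$; hence $B[\xi]>B[P]$.

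The delicate case is that $\xi\in U$ has no triple point, i.e.\ it has ``opened'' the triple point; here Theorem~\ref{thm:intro_Li-Yau} alone is useless, as a double-point competitor only obeys $B\geq4\varpi^*\ll9\varpi^*$. The plan is to parametrize $P$ so that its triple point is attained at $s=0,\tfrac13,\tfrac23$, cut the nearby curve $\xi$ at these parameters into three arcs $\xi_1,\xi_2,\xi_3$ each $W^{2,2}$-close to a leaf, and record the endpoint gaps $g_i:=\xi(\tfrac{i}{3})-\xi(\tfrac{i-1}{3})$, which are small and satisfy $g_1+g_2+g_3=0$ by construction. One then establishes a quantitative refinement of Theorem~\ref{thm:leaf_minimality}, describing to leading order how the least normalized bending energy of a length-$\ell$ curve joining two points at distance $\delta$ depends on $\delta$, sums the resulting lower bounds over $i$, and minimizes over all $(g_1,g_2,g_3)$ with $g_1+g_2+g_3=0$; by the threefold symmetry of $P$ the first-order contributions should cancel and the residual quadratic form be positive definite, yielding $B[\xi]\geq B[P]$ with equality only when $\xi$ is isometric to $P$. (The sub-case in which $\xi$ keeps a double point is recovered by setting the relevant gaps to zero.)

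For the second-smallest-energy claim, let $\gamma$ be a stable elastic knot for the unknot class, not isometric to the one-fold circle. If $\gamma$ were embedded, then all nearby curves in $\mathcal{K}$ would be embedded unknots (embeddedness and knot type are $C^1$-open), so $\gamma$ would be a length-constrained local minimizer of $B$ among immersed closed curves, i.e.\ a stable closed elastica, hence the one-fold circle by Langer--Singer \cite{LS_85} --- a contradiction; thus $\gamma$ has a point of multiplicity $r\geq2$. If $r\geq3$, then $B[\gamma]\geq9\varpi^*$ by Theorem~\ref{thm:intro_Li-Yau}, with equality only for $P$. It remains to exclude stable elastic knots for the unknot whose self-intersections are all double points and whose energy lies in $[4\varpi^*,9\varpi^*)$. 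For this one would first establish structure --- such a $\gamma$ has finitely many double points (a priori bounded in number via the total curvature $TC[\gamma]\le\sqrt{B[\gamma]}$), between consecutive ones it is a smooth elastica arc with a common Lagrange multiplier, and a balancing condition holds at each double point --- so that the explicit arcs of Section~\ref{sec:elastica_geom} leave only finitely many combinatorial candidates (the multiply-covered circles and the figure-eight elastica being lowest in energy). For each such candidate one then constructs an explicit energy-decreasing, length- and unknot-preserving perturbation; this is precisely the sense in which ``the figure-eight elastica cannot be realized as a stable closed wire,'' and the argument would follow Langer--Singer's instability computations.

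The main obstacles are both genuinely second-order. In the opened case above, one needs the quantitative version of Theorem~\ref{thm:leaf_minimality} with the correct leading-order dependence on the endpoint gap, together with the positive definiteness of the induced quadratic form on $\{g_1+g_2+g_3=0\}$: Theorem~\ref{thm:leaf_minimality} as stated is only a soft, first-order statement and cannot by itself exclude an energy-neutral opening of the triple point. In the second part, the full classification of self-intersecting stable configurations with only double points, and the construction of an admissible energy-decreasing perturbation for every non-propeller candidate, amount to a Langer--Singer-type program that has not been carried out within the class $\mathcal{K}$; this is presumably why the assertion remains conjectural.
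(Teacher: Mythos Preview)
The statement you are addressing is explicitly labelled a \emph{Conjecture} in the paper, and the paper offers no proof for it; it is posed as an open problem. So there is no ``paper's own proof'' to compare your proposal against. Your write-up is in effect a heuristic outline of where the difficulties lie, and you yourself concede at the end that the key steps remain open --- in that sense your proposal is not a proof at all, but a strategy sketch together with an honest diagnosis of the obstacles.

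That said, as a strategy sketch your outline is coherent and identifies the right pressure points. The reduction of the multiplicity-$\geq3$ case to Theorem~\ref{thm:intro_Li-Yau} is correct, as is the observation (via Langer--Singer) that any non-circular stable elastic knot must lie on the boundary $\partial\mathcal{K}$. The two places where you stop are exactly the genuine gaps: (i) for competitors that ``open'' the triple point, one needs a second-order expansion of the pinned minimum of $\bar{B}$ in the endpoint gap, together with positive definiteness of the resulting form on $\{g_1+g_2+g_3=0\}$ --- neither of which is available in the paper or in the literature; and (ii) for the second-smallest-energy claim, one needs a classification-plus-instability program for self-intersecting configurations with only double points inside $\mathcal{K}$, which has likewise not been carried out. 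These are precisely why the paper records the statement as a conjecture rather than a theorem. One minor caution: your bound on the number of double points via $TC[\gamma]\leq\sqrt{B[\gamma]}$ is not by itself a finiteness argument for self-intersections (total curvature does not control crossing number for non-embedded curves), so even the ``structure'' step in your second part would need a different ingredient.
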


In fact, we expect that the family of leafed elasticae provides more candidates of stable elastic knots.
This conjecture is partly motivated by the fact that a $4$-leafed elastica can also be realized experimentally; this was told by Ben Andrews to the author in 2021.

\begin{conjecture}
    There are integers $r\geq4$ such that a closed $r$-leafed elastica is a stable elastic knot for the unknot class.
\end{conjecture}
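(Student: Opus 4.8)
The plan is to settle the conjecture for the first nontrivial value $r=4$ — for which a physical realization has been observed — following the template expected to handle the elastic propeller ($r=3$), but exploiting that here only ordinary, not strict, stability is needed. \textbf{Step 1 (realizability in $\mathcal K$).} First I would determine for which $r$ some closed $r$-leafed elastica $\gamma_r$ represents the unknot, and exhibit it as an element of $\mathcal K$ by constructing an explicit family of embedded unit-length unknotted curves $\Gamma_\delta\to\gamma_4$ in $W^{2,2}$ as $\delta\to0$; this is the rigorous counterpart of the wire deformation behind Figure~\ref{fig:propeller_photo}, with the $4$-leaf experiment giving the blueprint. Letting the relative placements of the four leaves vary together with $\delta$ simultaneously identifies the directions along which $\mathcal K$ is approached from the embedded side, i.e.\ part of the tangent cone of $\mathcal K$ at $\gamma_4$.

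\textbf{Step 2 (reduction to leaves).} Any competitor $\xi\in\mathcal K$ that is $W^{2,2}$-close to $\gamma_r$ is $C^1$-close, hence splits near the multiple point into $r$ sub-arcs $\xi_1,\dots,\xi_r$ of lengths $\ell_i>0$ with $\sum_i\ell_i=1$, each with (almost) coinciding endpoints. If $\xi$ genuinely carries a point of multiplicity $r$, then Theorem~\ref{thm:intro_Li-Yau} already gives $\bar B[\xi]\ge\varpi^*r^2=\bar B[\gamma_r]$; concretely, cutting at the multiple point and combining Theorem~\ref{thm:leaf_minimality} applied to each $\xi_i$ with the scaling identity $B=\bar B/L$ and the AM--HM inequality yields
\[
B[\xi]=\sum_i B[\xi_i]=\sum_i\frac{\bar B[\xi_i]}{\ell_i}\ge\sum_i\frac{\varpi^*}{\ell_i}\ge\varpi^*r^2,
\]
with equality iff each $\xi_i$ is a leaf and all $\ell_i=1/r$. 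Hence no energy drop is possible along perturbations preserving the $r$-fold multiplicity, and a hypothetical drop must come either from perturbations that destroy the multiple point (resolving it into an embedded or lower-multiplicity unknot) or from perturbations that merely move the $r$ leaves rigidly in $\R^3$ — along which $B$ is exactly constant.

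\textbf{Step 3 (degenerate and opening directions).} The rigid placements of the leaves form a finite-dimensional set along which $B$ is constant, and stability there is automatic once Step~1 is refined to show that nearby placements still yield elements of $\mathcal K$. The genuinely new work concerns perturbations that pull the strands apart at the multiple point $P$, for which Theorem~\ref{thm:intro_Li-Yau} gives no information (the far-away unit circle has strictly smaller energy), so a quantitative \emph{local} argument is required. I would proceed by contradiction and compactness: were $\gamma_r$ not a local minimizer, choose $\xi_k\to\gamma_r$ in $W^{2,2}$ with $\xi_k\in\mathcal K$ and $B[\xi_k]<B[\gamma_r]$; by Step~2 we may assume each $\xi_k$ has multiplicity $<r$, so $\xi_k$ has $r$ strands passing within $o(1)$ of $P$ without meeting while remaining $C^1$-close to the model. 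One then sets up the constrained second variation of $B$ at $\gamma_r$ over the tangent cone of $\mathcal K$ — using that $\gamma_r$ solves the elastica equation on each leaf, that $\kappa=0$ at the strand-ends (consistent with Theorem~\ref{thm:regularity_minimizer_pinned}, since $\cn(K(m^*),m^*)=0$), and that the natural force-balance conditions couple the strands at $P$ — and shows it to be nonnegative on the ``opening'' part of the tangent cone; a \L{}ojasiewicz--Simon gradient inequality adapted to the stratified set $\mathcal K$ would then upgrade this, together with the constant-energy placement directions, to a genuine local minimum, contradicting the choice of $\xi_k$.

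\textbf{Main obstacle.} The crux is Step~3: describing the tangent cone of $\mathcal K$ at a curve with a multiple point of order $r$, and proving that the bending energy does not decrease as the $2r$ strand-ends are separated. This is exactly the ``locking'' phenomenon observed experimentally, and making it rigorous appears to demand both a geometric analysis of how the unknot constraint restricts the separation of the strands near $P$ and a second-variation estimate for the fourth-order elastica operator on a star-shaped network of elastica arcs with nonstandard vertex conditions. A secondary difficulty is to prove, rather than assume, that for the relevant values of $r$ it is the symmetric $r$-leafed configuration — and not some other, possibly knotted, arrangement of $r$ equal leaves — that is realized as an unknot in $\mathcal K$.
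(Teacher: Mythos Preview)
The statement you are trying to prove is labelled a \emph{Conjecture} in the paper, not a theorem: the paper offers no proof, only the remark that the case $r=4$ is motivated by an experimental observation communicated to the author. So there is no ``paper's own proof'' to compare your attempt against; the problem is genuinely open.

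As for your proposal itself, it is a reasonable strategic outline but it is not a proof, and you already identify the gap yourself. Step~2 is correct and is exactly the mechanism behind Theorem~\ref{thm:intro_Li-Yau}: among competitors that retain a point of multiplicity $r$, the $r$-leafed elastica is a global minimizer. But, as you note, a generic element of $\mathcal{K}$ close to $\gamma_r$ in $W^{2,2}$ has \emph{no} point of multiplicity $r$; the multiple point resolves into $\binom{r}{2}$ near-crossings, and for such competitors Theorem~\ref{thm:leaf_minimality} is simply not applicable because the sub-arcs $\xi_i$ do not close up. Your Step~3 is therefore where the entire content of the conjecture lies, and what you have written there is a wish list rather than an argument: you would need (i) a description of the tangent cone to the $W^{2,2}$-closure $\mathcal{K}$ at a curve with an $r$-fold point, which is a stratified object with no available theory; (ii) a second-variation computation for $B$ on a network of half-figure-eights with transmission conditions at the node, which has not been carried out; and (iii) a {\L}ojasiewicz--Simon inequality adapted to a constraint set that is not a manifold. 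None of these ingredients currently exist in the literature, and each is a substantial research problem in its own right. In short, your plan correctly isolates the difficulty, but it does not overcome it; the conjecture remains open.
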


\begin{problem}
    Is the above family of integers $r$ infinite?
\end{problem}

Note that the above problems do not say anything about uniqueness (or strictness).
In order to obtain physical uniqueness for four or more leaves, we would also need to take account of higher-order effects as in the singular perturbation theory of \cite{Gerlach2017}, due to non-uniqueness of leafed elasticae explained in \cite[Remark 3.18]{Miura_LiYau}.

\subsection{$p$-Elastica}

The natural $L^p$-counterpart of elastica theory is called $p$-elastica theory.
More precisely, for $p\in(1,\infty)$, a \emph{$p$-elastica} $\gamma\in W^{2,p}_\textup{imm}(I;\R^n)$ is defined as a critical point of the \emph{$p$-bending energy}
\[
B_p[\gamma]:=\int_I|\kappa|^pds
\]
among fixed-length curves, or equivalently for some $\lambda\in\R$,
\[
\nabla B_p[\gamma] +\lambda\nabla L[\gamma]=0.
\]
In general, this Euler--Lagrange equation takes the form of a second order ODE for the curvature whose top-order term is singular ($p<2$) or degenerate ($p>2$).
This leads to loss of regularity for solutions and makes their classification much more complicated.
In particular, Watanabe \cite{nabe14} found that for $p>2$ there is a qualitatively novel type of critical point called \emph{flat-core}.

Classification theory for planar $p$-elasticae is developed in a recent series of works by the author and Yoshizawa
\cite{MYarXiv2203,MYarXiv2209,MY_2024_Crelle,MYarXiv2310}.
In particular, this theory implies

\begin{theorem}[Classification of closed planar $p$-elasticae {\cite[Theorem 5.6]{MYarXiv2203}}]
    Let $p\in(1,\infty)$.
    Then any closed planar $p$-elastica is given by either a circle or a figure-eight $p$-elastica, possibly multiply covered.
\end{theorem}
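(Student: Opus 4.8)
The plan is to run the same scheme as in the proof of Theorem~\ref{thm:closed_planar}, but over the whole range $p\in(1,\infty)$, developing along the way the $p$-analogues of the geometric results of Section~\ref{sec:elastica_geom} and then imposing closedness. First I would compute the first variation of $B_p+\lambda L$ in arclength coordinates exactly as in Section~\ref{sec:elastica_geom} and, using the planar Frenet relations \eqref{eq:planar''}--\eqref{eq:planar_normal}, reduce the Euler--Lagrange equation to a scalar ODE for the signed curvature $k$, namely (weakly)
\begin{equation*}
p\,\partial_s^2\!\left(|k|^{p-2}k\right)+(p-1)|k|^pk-\lambda k=0,
\end{equation*}
which for $p=2$ is precisely \eqref{eq:elastica_ODE_2D}. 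Multiplying by $\partial_s(|k|^{p-2}k)$ and integrating produces a first integral, reducing the problem to a first-order autonomous ODE for $k$ governed by an effective potential; solving it --- by (Jacobi) elliptic integrals when $p=2$ and by their generalizations in \cite{MYarXiv2203} in general --- yields the full list of admissible curvature profiles, the $p$-counterparts of the five cases of Theorem~\ref{thm:planar_explicit}: constant curvature, sign-changing ``wavelike'', fixed-sign ``orbitlike'', homoclinic ``borderline'', and (only for $p>2$) ``flat-core'' profiles obtained by gluing a single bump-shaped piece to intervals on which $k\equiv0$.

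The genuinely new point at this stage, absent when $p=2$, is the behavior of $k$ near its zeros: the Euler--Lagrange operator is singular there for $1<p<2$ and degenerate for $p>2$, so one cannot simply invoke Picard--Lindel\"of. One must instead use the $C^1$-regularity of the arclength parametrization (hence continuity of $k$) together with the first integral to show that, in the non-flat-core cases, the curvature passes through each of its zeros in a definite way, so that the local pieces concatenate into exactly one of the profiles above; in the flat-core regime $p>2$ one must in addition catalogue the ways a bump can be attached to a flat segment. Once the curvature is known, integrating $\gamma_s=(\cos\theta,\sin\theta)$ with $\partial_s\theta=k$ yields explicit parametrizations and, in particular, the ``period'' of $k$ and $\theta$ together with the net translation accumulated over one period --- the $p$-analogues of \eqref{eq:wave_periodicity} and \eqref{eq:orbit_periodicity}.

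It then remains to impose closedness, $\gamma\colon\R/\Z\to\R^2$, by requiring that the length be an integer multiple of the curvature period and that the total translation vanish. The constant-curvature profiles give exactly the (possibly multiply covered) round circles, for which closedness is automatic. For wavelike profiles the closing condition collapses to a single transcendental equation $\Phi_p(m)=0$ in the modulus-type parameter $m$; showing that $\Phi_p$ is strictly monotone with a unique zero $m^*_p$ --- the $p$-analogue of Lemma~\ref{lem:elliptic_integral_wavelike} --- identifies precisely the figure-eight $p$-elastica. For orbitlike and borderline profiles the corresponding quantity $\Psi_p$ is either of one sign (the $p$-analogue of Lemma~\ref{lem:elliptic_integral_orbitlike}) or forces an unbounded curve, so these never close. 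The remaining flat-core case $p>2$ has no classical counterpart and is where I expect the real work to lie: here $\gamma$ is an alternating concatenation of straight segments and mutually congruent loops, and one must show, by a combinatorial and geometric analysis of the admissible arrangements (using that all loops have the same turning and the same translation up to rotation, while the flat parts contribute only translation), that the turnings and displacements can cancel only in the figure-eight pattern --- yielding, up to similarity and reparametrization, exactly the (possibly multiply covered) flat-core figure-eight $p$-elasticae. Assembling the cases gives the stated dichotomy; the full computations are carried out in \cite{MYarXiv2203}.
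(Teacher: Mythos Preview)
The paper does not give its own proof of this theorem: it is quoted verbatim as \cite[Theorem~5.6]{MYarXiv2203} and no argument is supplied beyond the surrounding remarks. Your proposal is therefore not competing against a proof in this paper but against the cited reference, and you acknowledge as much in your final sentence. As an outline, your plan is sound and tracks exactly the template the paper uses for $p=2$ (Theorem~\ref{thm:closed_planar}): derive the scalar curvature ODE, classify its solutions, integrate to get explicit parametrizations and quasi-periods, then impose closedness case by case. You also correctly flag the two places where the $p\neq2$ argument departs from the classical one --- the loss of Picard--Lindel\"of at zeros of $k$ and the appearance of flat-core solutions for $p>2$.

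One point to be careful about: your treatment of the flat-core case presumes that some flat-core configurations do close up and yield ``flat-core figure-eight $p$-elasticae''. Whether closed flat-core $p$-elasticae actually exist (and, if so, whether they are subsumed under the label ``figure-eight $p$-elastica'' in the statement) is not something you can read off from the present paper; you would need to check the precise definitions and case analysis in \cite{MYarXiv2203}. Also note the remark following Theorem~\ref{thm:closed_planar}: the cited result in fact covers the slightly wider class of curves with $\gamma(a)=\gamma(b)$ and $\gamma_s(a)=\gamma_s(b)$ without assuming curvature periodicity, so an argument that leans on ``length equals an integer multiple of the curvature period'' at the outset is narrower than what \cite[Theorem~5.6]{MYarXiv2203} actually proves.
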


The shapes of figure-eight $p$-elasticae depend on the values of $p$ as in Figure \ref{fig:p-figure8}.

\begin{figure}[htbp]
  \includegraphics[scale=0.2]{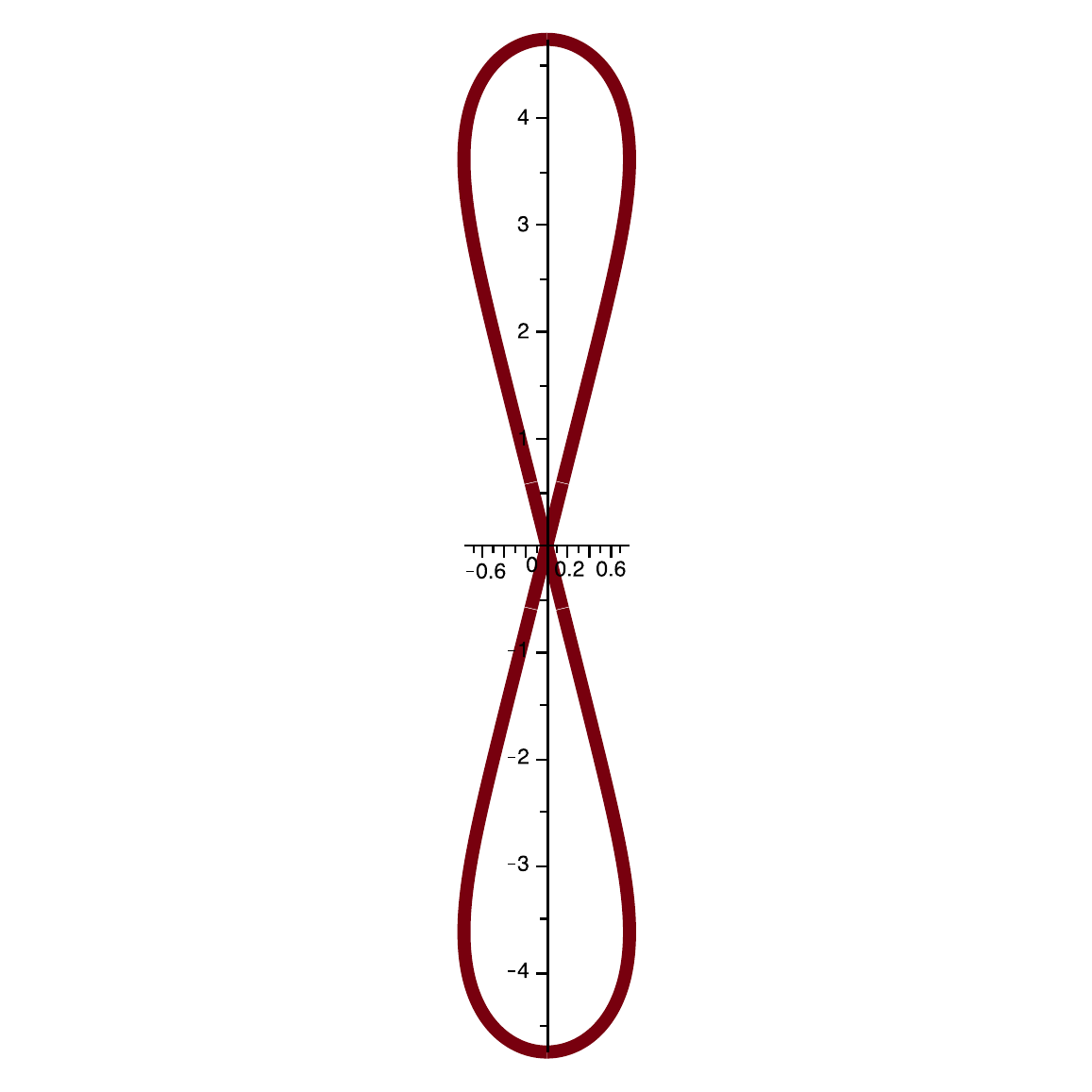}
  \includegraphics[scale=0.2]{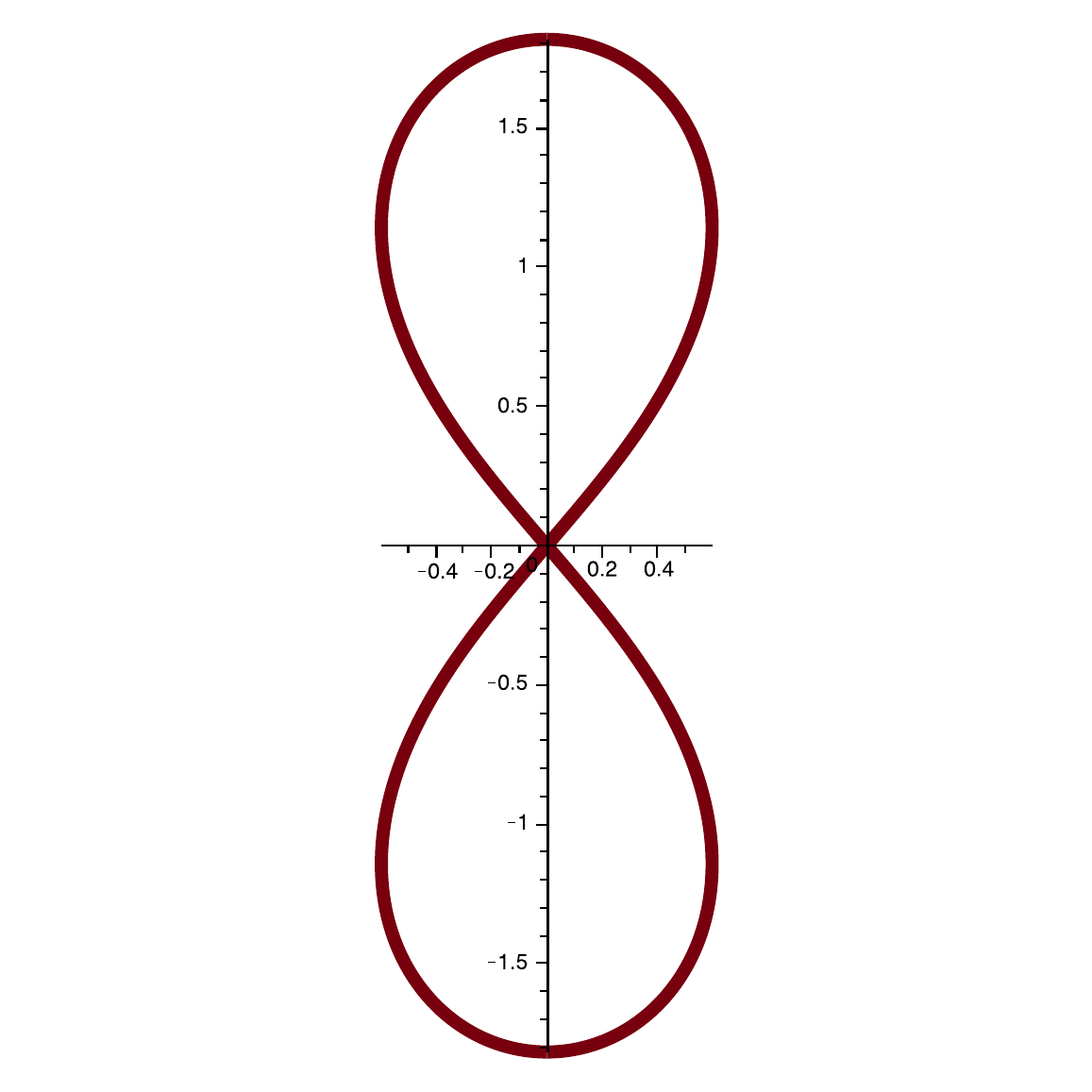}
  \includegraphics[scale=0.2]{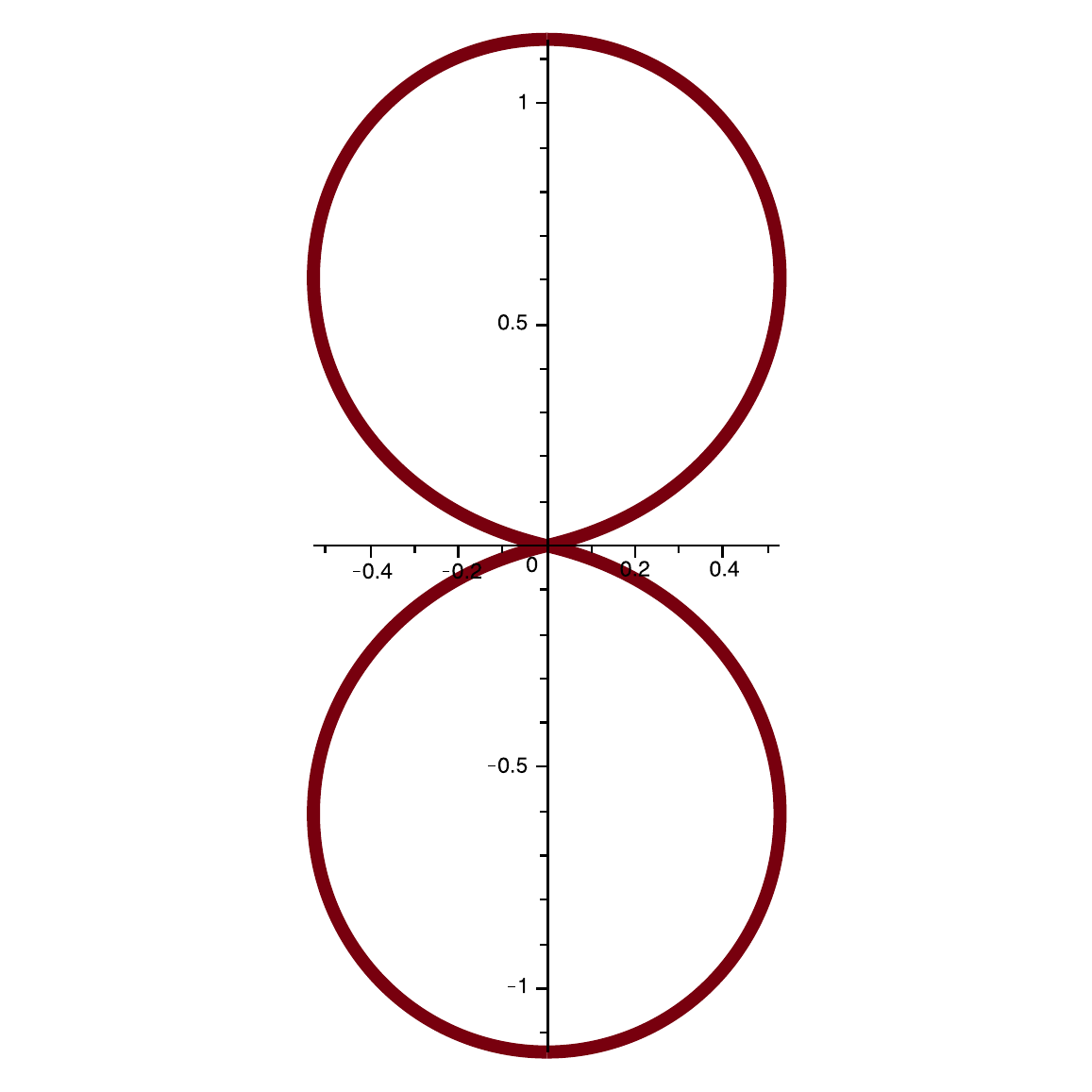}
\caption{Figure-eight $p$-elasticae with $p=\frac{6}{5}$, $p=2$, $p=10$ (from left to right) \cite{MYarXiv2203}.}
\label{fig:p-figure8}
\end{figure}

This result motivates us to extend the Li--Yau type inequality to the normalized $p$-bending energy
\[
\bar{B}_p[\gamma] := L[\gamma]^{p-1}B_p[\gamma].
\]
In \cite{MYarXiv2209} we solved the pinned boundary value problem for planar $p$-elasticae, and thus succeeded in generalizing the Li--Yau type inequality in the planar case.
Let $\varpi_p^*>0$ denote the energy $\bar{B}_p$ of the half-fold figure-eight $p$-elastica.

\begin{theorem}[{\cite{MYarXiv2209}}]\label{thm:p-Li-Yau}
    Let $p\in(1,\infty)$, and $r\geq2$ be an integer.
    Let $\gamma:\R/\Z\to\R^2$ be an immersed closed curve of class $W^{2,p}$ with a point of multiplicity $r$.
    Then
    \[
    \bar{B}_p[\gamma] \geq \varpi_p^* r^p.
    \]
    In addition, equality holds if and only if $\gamma$ is a closed $r$-leafed $p$-elastica.
\end{theorem}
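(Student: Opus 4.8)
The plan is to run the same two-step strategy used for the classical case after Theorem~\ref{thm:leaf_minimality}: first establish a \emph{$p$-leaf minimality} statement for arcs with coincident endpoints, then deduce the closed-curve inequality by cutting $\gamma$ at the multiple point and applying a convexity (power-mean) inequality in the lengths of the pieces. Since the statement concerns planar curves only, no dimension-reduction step is needed, which is the main simplification compared with the general-dimensional argument behind Theorem~\ref{thm:intro_Li-Yau}.

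\textbf{Step 1 ($p$-leaf minimality).} The first task is to prove the $W^{2,p}$-analogue of Theorem~\ref{thm:leaf_minimality}: if $\gamma\in W^{2,p}_\mathrm{imm}(I;\R^2)$ satisfies $\gamma(a)=\gamma(b)$, then $\bar{B}_p[\gamma]\ge\varpi_p^*$, with equality if and only if $\gamma$ coincides with the half-fold figure-eight $p$-elastica up to similarity and reparametrization. The scheme follows Section~\ref{sec:elastica_anal} line by line. After normalizing $\gamma(a)=\gamma(b)=0$ and $L[\gamma]=1$, one obtains a minimizer of $B_p$ among such pinned curves by the direct method in $W^{2,p}$ (the analogue of Theorem~\ref{thm:existence_minimizer_pinned}, using reflexivity of $W^{2,p}$ and weak lower semicontinuity of $\int|\gamma''|^p$); then the Lagrange multiplier method yields a Euler--Lagrange equation together with the natural boundary condition on the curvature at the endpoints. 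Finally one invokes the classification of planar $p$-elasticae and the solution of the pinned boundary value problem in \cite{MYarXiv2203,MYarXiv2209} to identify this minimizer, together with the uniqueness needed for the equality case, as the half-fold figure-eight $p$-elastica.

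\textbf{Step 2 (cutting and power-mean inequality).} Given a closed planar $\gamma\in W^{2,p}_\mathrm{imm}(\R/\Z;\R^2)$ with a point $P$ of multiplicity $r$, pick $r$ distinct parameters $t_1<\dots<t_r$ with $\gamma(t_i)=P$ and cut $\gamma$ at these points into $r$ immersed sub-arcs $\gamma_1,\dots,\gamma_r$, each with both endpoints at $P$. With $L_i:=L[\gamma_i]$ and $L:=L[\gamma]=\sum_i L_i$, scale invariance of $\bar{B}_p$ and Step~1 give $B_p[\gamma_i]=\bar{B}_p[\gamma_i]\,L_i^{1-p}\ge\varpi_p^*\,L_i^{1-p}$, hence
\[
\bar{B}_p[\gamma]=L^{p-1}\sum_{i=1}^{r}B_p[\gamma_i]\ge\varpi_p^*\,L^{p-1}\sum_{i=1}^{r}L_i^{1-p}.
\]
Since $t\mapsto t^{1-p}$ is convex on $(0,\infty)$ for $p>1$, Jensen's inequality gives $\sum_i L_i^{1-p}\ge r\,(L/r)^{1-p}=r^{p}L^{1-p}$, so $\bar{B}_p[\gamma]\ge\varpi_p^* r^p$. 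In the equality case, equality in Jensen forces $L_1=\dots=L_r$, while equality in Step~1 forces each $\gamma_i$ to be a half-fold figure-eight $p$-elastica up to similarity and reparametrization; hence $\gamma$ is a concatenation of $r$ congruent leaves, that is, a closed $r$-leafed $p$-elastica. Conversely, any closed $r$-leafed $p$-elastica attains equality, as one checks by the direct computation of $\bar{B}_p$ of a single leaf.

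\textbf{Main obstacle.} The substantive difficulty is concentrated in Step~1. Unlike the case $p=2$, the curvature form of the $p$-elastica equation has a top-order term that is degenerate for $p>2$ and singular for $p<2$, so the bootstrap regularity argument of Theorem~\ref{thm:regularity_critical} breaks down and the classification of pinned minimizers is genuinely more delicate; in particular the flat-core critical points discovered by Watanabe \cite{nabe14} for $p>2$ must be analyzed and excluded for the pinned problem. This is precisely the content of \cite{MYarXiv2209}; granting it, Step~2 is a routine adaptation of the cutting argument indicated after Theorem~\ref{thm:leaf_minimality}, with the arithmetic--harmonic mean inequality replaced by the convexity of $t\mapsto t^{1-p}$.
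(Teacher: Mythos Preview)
The paper does not supply its own proof of this theorem; it is simply cited from \cite{MYarXiv2209}. Your outline is the correct strategy and exactly parallels what the paper does for $p=2$ (Theorem~\ref{thm:leaf_minimality} followed by the cutting argument described immediately afterwards), with the AM--HM inequality replaced by Jensen for the strictly convex map $t\mapsto t^{1-p}$. Your identification of Step~1 as the genuine obstacle, and of \cite{MYarXiv2203,MYarXiv2209} as the place where the pinned classification (including the handling of flat-core critical points for $p>2$) is carried out, is accurate; Step~2 is indeed routine once Step~1 is granted.
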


As in the case $p=2$, the existence of closed planar leafed $p$-elasticae is a delicate problem, and in fact sensitively depends on the value of $p\in(1,\infty)$.
However, in \cite{MYarXiv2209} we proved monotonicity of the crossing angle with respect to $p$, and thus found a unique exponent $p$ such that equality holds for every multiplicity $r\geq2$.

\begin{theorem}[{\cite{MYarXiv2209}}]\label{thm:unique_exponent}
    There is a unique exponent $p\in(1,\infty)$ such that for every integer $r\geq2$ there exists a closed planar $r$-leafed $p$-elastica.
\end{theorem}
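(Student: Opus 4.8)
The plan is to reduce the existence of a closed planar $r$-leafed $p$-elastica to a single scalar condition on a ``crossing angle'' parameter, and then to exhibit a unique exponent $p$ satisfying that condition for all $r\geq2$ simultaneously. First I would recall from the classification of planar $p$-elasticae (and the pinned boundary value problem solved in \cite{MYarXiv2209}) that a leaf is a half-fold figure-eight $p$-elastica, whose geometry is controlled by the parameter $m\in(0,1)$ appearing in the curvature formula; the relevant geometric quantity is the interior crossing angle $\phi(p)$ at which the two ends of a single leaf meet, which can be written explicitly in terms of complete elliptic-type integrals (for $p=2$ this is $\theta_w(2K(m^*))$ read off from Theorem \ref{thm:planar_explicit} and Lemma \ref{lem:elliptic_integral_wavelike}). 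A closed $r$-leafed $p$-elastica is obtained by arranging $r$ congruent leaves around a common crossing point, so that the leaves close up into a $C^1$ loop; the closure condition is precisely that the total turning equals a multiple of $2\pi$, which (after accounting for the rotation induced at the crossing point) amounts to the arithmetic condition that $\phi(p)$ be a specified rational multiple of $\pi$ depending on $r$. Concretely, an $r$-leafed $p$-elastica exists if and only if $\phi(p) = \pi(1 - \tfrac{2}{r})$, or the analogous relation derived carefully in \cite{MYarXiv2209}.

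The second step is the monotonicity input: in \cite{MYarXiv2209} it is proved that the crossing angle $\phi(p)$ is strictly monotone in $p\in(1,\infty)$ (this is exactly ``monotonicity of the crossing angle with respect to $p$'' cited just before the statement), and one also identifies its limiting values as $p\to1^+$ and $p\to\infty$. Granting strict monotonicity and the fact that the range of $\phi$ over $(1,\infty)$ is an open interval containing all the discrete target values $\pi(1-\tfrac2r)$ for $r\geq2$ in its closure, one sees that for each fixed $r$ there is at most one $p=p_r$ with $\phi(p_r)=\pi(1-\tfrac2r)$, and that the sequence $p_r$ is itself monotone in $r$ (since the targets $\pi(1-\tfrac2r)$ are monotone in $r$ and $\phi$ is monotone). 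The requirement that a closed $r$-leafed $p$-elastica exist for \emph{every} $r\geq2$ forces $p$ to satisfy $\phi(p)=\pi(1-\tfrac2r)$ for all $r$ at once — but the targets are distinct, so this is impossible unless one reinterprets ``existence'' correctly: the point, as in the $p=2$ case of Theorem \ref{thm:intro_Li-Yau}, is that for $r$ even the leaves can be arranged in opposing pairs so the closure condition is automatically met, and only for $r$ odd does one get a genuine constraint $\phi(p)=\pi(1-\tfrac2r)$. Taking $r\to\infty$ through odd values, the constraints $\phi(p)=\pi(1-\tfrac2r)$ accumulate at $\phi(p)=\pi$; so the exponent that works for \emph{all} odd $r$ must be the unique $p^\dagger$ with $\phi(p^\dagger)=\pi$ in the appropriate limiting sense — except that this over-determines $p$. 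The correct reading (and what \cite{MYarXiv2209} actually proves) is that there is a single $p$ for which the crossing angle hits the ``degenerate'' value making closure possible for every $r$; I would therefore pin down $p$ by solving $\phi(p)$ equal to the one value of the angle that is compatible with the $r$-fold symmetric closure for all $r$ simultaneously, and strict monotonicity of $\phi$ gives uniqueness of that $p$.

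More carefully, I expect the actual mechanism is this: the half-fold figure-eight $p$-elastica has a \emph{tangential angle at its endpoint} $\psi(p)$ (the quantity analogous to $\theta_w(0)=0$ versus the full turning of one leaf), and an $r$-leafed configuration closes up precisely when $r\,\psi(p)\in 2\pi\mathbf{Z}$; since $\psi(p)$ varies strictly monotonically over an interval of length more than $2\pi$ (or at least crossing a value of the form $2\pi/q$), for each admissible target there is exactly one $p$, and the intersection over all $r$ of the solution sets is either empty or a single point. The heart of the theorem, then, is showing this intersection is nonempty and a singleton, which follows once one verifies that the limiting value $\lim_{p\to 1^+}\psi(p)$ and $\lim_{p\to\infty}\psi(p)$ straddle the unique common value (the ``free'' configuration angle, cf.\ the rectangular elastica at $p=2$ in Remark \ref{rem:multiplier}). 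The main obstacle I anticipate is precisely the computation and rigorous monotonicity proof for $\psi(p)$ (or $\phi(p)$) as a function of $p$: unlike the $p=2$ case where everything reduces to the clean elliptic identities in Lemmas \ref{lem:elliptic_integral_wavelike} and \ref{lem:elliptic_integral_orbitlike}, for general $p$ one must differentiate a ratio of incomplete-elliptic-type integrals with a $p$-dependent integrand and control the sign — this is the technically delicate estimate carried out in \cite{MYarXiv2209}, which I would invoke rather than reprove, since all the surrounding structure (classification, pinned problem, leaf description) is already available from the results cited above.
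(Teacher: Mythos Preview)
You have correctly identified the two essential ingredients --- the crossing angle $\phi(p)$ of the figure-eight $p$-elastica and its strict monotonicity in $p$ --- but your closure condition is wrong, and this is why the argument collapses into the contradictions you spend most of the proposal trying to talk your way out of. You posit that an $r$-leafed configuration forces a single equation $\phi(p)=\pi(1-\tfrac{2}{r})$, which would give a different target angle for each $r$ and make the theorem impossible. That is not the condition.

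The point you are missing is that each leaf can be attached in one of two orientations (reflected or not), so the net tangential turn after $r$ leaves is $(a-b)\phi(p)$ with $a+b=r$, and closure requires only $(a-b)\phi(p)\in 2\pi\mathbf{Z}$ for \emph{some} choice of $a,b\geq0$. For $r$ even one may take $a=b$ and closure is automatic, exactly as in the $p=2$ case of Theorem~\ref{thm:intro_Li-Yau}. For $r$ odd the difference $a-b$ is odd with $|a-b|\leq r$, so the binding constraint is the smallest odd case $r=3$: one needs $3\phi(p)\in 2\pi\mathbf{Z}$, i.e.\ $\phi(p)=\tfrac{2\pi}{3}$ (the degenerate value being excluded). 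Once $\phi(p)=\tfrac{2\pi}{3}$, every odd $r\geq3$ closes as well by taking $a-b=3$. Hence ``closed planar $r$-leafed $p$-elastica exists for every $r\geq2$'' is equivalent to the single equation $\phi(p)=\tfrac{2\pi}{3}$, and strict monotonicity of $\phi$ then yields the unique exponent. This is precisely the mechanism the paper states immediately after the theorem: at the unique exponent the figure-eight has crossing angle $\tfrac{2\pi}{3}$, so three leaves close up, and the rest follows. Your limiting arguments and the search for a ``degenerate'' angle are unnecessary once the correct (nested, not pointwise) closure condition is in place.
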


The reason why this theorem holds is that for the above unique exponent the figure-eight $p$-elastica has crossing angle $\frac{2\pi}{3}$, so that three leaves can be closed as in Figure \ref{fig:3-leaf}.
The value of this exponent is numerically computed as $p\simeq1.5728$.

\begin{center}
    \begin{figure}[htbp]
      \includegraphics[scale=0.2]{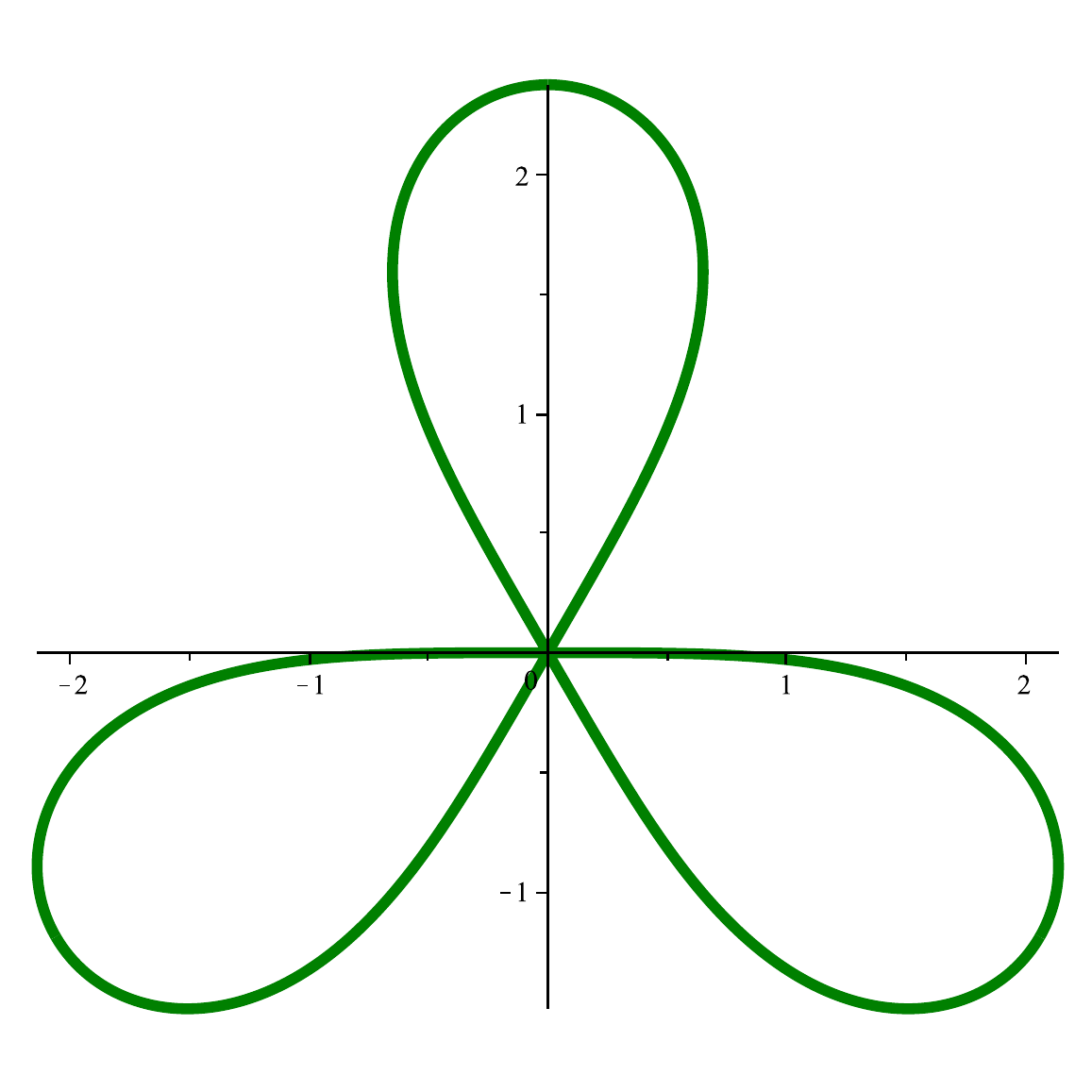}
  \caption{The unique closed planar $3$-leafed $p$-elastica \cite{MYarXiv2209}.}
  \label{fig:3-leaf}
  \end{figure}
\end{center}

Now we discuss several open problems.
A straightforward problem is to extend Theorem \ref{thm:p-Li-Yau} from $\R^2$ to $\R^n$.
This is work in progress and will be addressed in our future work \cite{GruenMiura}.
It is also natural to ask a generalized version of Problem \ref{prob:odd_planar} about optimal curves which are not leafed $p$-elasticae.
See also \cite{MYarXiv2209} for open problems on $p$-elastic networks.

Here we mention other kinds of (vague) problems.

\begin{problem}
    Is there a simple (algebraic) characterization of the unique exponent $p$ in Theorem \ref{thm:unique_exponent}?
\end{problem}

\begin{problem}
    Is there a simple (algebraic) characterization of pairs $(p,r)$ for which there exists a closed planar $r$-leafed $p$-elastica?
\end{problem}

\subsection{Elastic flow}

The $L^2$-gradient flow of the energy $B+\lambda L$ for $\lambda>0$ is called the \emph{(length-penalized) elastic flow}.
This flow is parabolic but of higher order so that the flow loses many kinds of positivity due to the lack of maximum principles.
Embeddedness is a typical instance of positivity.
In \cite{MMR23} optimal energy thresholds for embeddedness-preservation of closed elastic flows in $\R^n$ are obtained.
This problem is closely related to the Li--Yau type inequality, since the inequality implies an optimal energy upper bound to ensure embeddedness.
In fact, the proof of the case $n\geq3$ directly relies on Theorem \ref{thm:intro_Li-Yau} (for $n=2$ the threshold value becomes higher and an additional argument is needed).

Recently, the \emph{$p$-elastic flow}, where $B+\lambda L$ is replaced by $B_p+\lambda L$, is also studied in \cite{OW23,BHV,BVH}.
The $p$-elastic flow for $p\neq2$ is considerably delicate in view of regularity, and in fact even the optimal convergence topology for stationary solutions is left open (as mentioned in \cite{MYarXiv2203}).

Concerning self-intersections, the following natural problem is also open.

\begin{problem}
    Let $p\in(1,\infty)$.
    Determine the optimal energy thresholds (as in \cite{MMR23}) such that $p$-elastic flows preserve embeddedness of initial closed curves.
\end{problem}

\appendix
\section{Jacobi elliptic integrals and functions}\label{sec:elliptic}

In this appendix we collect definitions and basic properties of Jacobi elliptic integrals and functions.
For more details see classical textbooks, e.g.\ \cite{Whittaker1962}.

We first give a general remark on the notation.
Each of elliptic integrals and functions is defined as a one-parameter family of functions, like $F(x,m)$ and $\sn(x,m)$.
Throughout this section we use the notation $m\in[0,1]$, which is usually called the \emph{parameter} in the literature.
However, we shall emphasize that the \emph{elliptic modulus} $k\in[0,1]$ defined through the relation $m=k^2$ is also frequently used.
Here we choose $m$ just to simplify computations.

% Throughout this paper, we use the semicolon ``$;$'' instead of ``$,$'' when 
% For example,
% \[
% F(x;q):=F(x,q^2)=F(x,m), \quad \sn(x;q):=\sn(x,q^2)=\sn(x,m).
% \]

\subsection{Jacobi elliptic integrals}

\begin{definition}[Elliptic integral of first kind]
  For a parameter $m\in[0,1)$ and $x\in\R$ we define the \emph{incomplete elliptic integral of first kind} by
  \begin{equation*}
    F(x,m):=\int_0^x\frac{d\theta}{\sqrt{1-m\sin^2\theta}},
  \end{equation*}
  and the \emph{complete elliptic integral of first kind} by
  \begin{align*}
    K(m) &:= F\left(\frac{\pi}{2},m\right) = \int_0^{\pi/2}\frac{d\theta}{\sqrt{1-m\sin^2\theta}}>0.
  \end{align*}
\end{definition}

\begin{definition}[Elliptic integral of second kind]
  For a parameter $m\in[0,1)$ and $x\in\R$ we define the \emph{incomplete elliptic integral of second kind} by
  \begin{equation*}
    E(x,m):=\int_0^x\sqrt{1-m\sin^2\theta}d\theta,
  \end{equation*}
  and the \emph{complete elliptic integral of second kind} by
  \begin{align*}
    E(m) &:= E\left(\frac{\pi}{2},m\right) = \int_0^{\pi/2}\sqrt{1-m\sin^2\theta}d\theta >0.
  \end{align*}
\end{definition}

The following properties are obtained from the definition.

\begin{proposition}
  For any $m\in[0,1)$ and $x\geq0$ (resp.\ $x\leq0$),
  \begin{equation*}
    F(x,m)\geq E(x,m) \quad \big( \text{resp.}\ F(x,m)\leq E(x,m) \big),
  \end{equation*}
  where equality holds if and only if either $x=0$ or $m=0$.
  In particular,
  \begin{equation*}
    K(m)\geq E(m),
  \end{equation*}
  where equality holds if and only if $m=0$.
\end{proposition}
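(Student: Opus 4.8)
The plan is to prove the inequality by a pointwise comparison of the two integrands. Subtracting the defining integrals and combining over the common denominator $\sqrt{1-m\sin^2\theta}$ gives
\[
F(x,m)-E(x,m)=\int_0^x\left(\frac{1}{\sqrt{1-m\sin^2\theta}}-\sqrt{1-m\sin^2\theta}\right)d\theta=\int_0^x\frac{m\sin^2\theta}{\sqrt{1-m\sin^2\theta}}\,d\theta,
\]
using $1-(1-m\sin^2\theta)=m\sin^2\theta$. Everything then reduces to understanding the sign and the zero set of the single function $g(\theta):=\frac{m\sin^2\theta}{\sqrt{1-m\sin^2\theta}}$.

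First I would record that $m\in[0,1)$ forces $1-m\sin^2\theta\geq 1-m>0$ for all $\theta$, so $g$ is continuous on all of $\R$ and clearly satisfies $g\geq 0$. Hence $\int_0^x g\,d\theta\geq 0$ when $x\geq 0$ and $\int_0^x g\,d\theta\leq 0$ when $x\leq 0$, which yields $F(x,m)\geq E(x,m)$ and $F(x,m)\leq E(x,m)$ in the two cases respectively. Specialising to $x=\pi/2$ gives $K(m)\geq E(m)$ at once.

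For the equality statement, the key observation is that $g(\theta)=0$ exactly when $m=0$ or $\sin\theta=0$. If $m=0$ then $g\equiv 0$, so $F(x,0)=E(x,0)$ for every $x$; if $x=0$ both integrals trivially vanish. Conversely, if $x>0$ and $m>0$, then $g$ is continuous, non-negative, and strictly positive on the nonempty open interval $(0,\min\{x,\pi\})$, so $\int_0^x g\,d\theta>0$ and the inequality is strict; the case $x<0$ is symmetric. In particular $K(m)=E(m)$ forces $m=0$, since $\pi/2$ is not an integer multiple of $\pi$.

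The argument is entirely elementary, and I do not expect a genuine obstacle. The only point that needs a moment's care is to note at the outset that $m<1$ keeps $1-m\sin^2\theta$ uniformly bounded below by $1-m>0$, so that no integrability or improper-integral issue arises and the term-by-term comparison of integrands is legitimate on the whole real line; the equality analysis is then just the observation that a continuous non-negative function with a positive value has positive integral.
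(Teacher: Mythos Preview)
Your proof is correct and is exactly the direct-from-definition argument the paper has in mind; the paper does not spell out a proof but simply remarks that the proposition is ``obtained from the definition,'' and your pointwise comparison of integrands is the natural way to unpack that remark.
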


\begin{proposition}[Dependence on variable]
  For each $m\in[0,1)$, both $F(\cdot,m)$ and $E(\cdot,m)$ are real analytic, strictly increasing, bijective functions from $\R$ to $\R$ such that
  \begin{align*}
    \partial_xF(x,m) &= \frac{1}{\sqrt{1-m\sin^2x}}\in[1,\frac{1}{\sqrt{1-m}}],\\
    \partial_xE(x,m) &= \sqrt{1-m\sin^2x}\in[\sqrt{1-m},1].
  \end{align*}
  In addition, we have the odd symmetry
  \begin{align*}
    F(-x,m) &= -F(x,m),\\
    E(-x,m) &= -E(x,m),
  \end{align*}
  and also the (arithmetic) quasi-periodicity,
  \begin{align*}
    F(x+\pi,m) &= F(x,m)+2K(m),\\
    E(x+\pi,m) &= E(x,m)+2E(m).
  \end{align*}
  In particular, for any integer $\ell\in\Z$,
  \begin{align*}
    F\left(\frac{\ell\pi}{2},m\right) &= \ell K(m),\\
    E\left(\frac{\ell\pi}{2},m\right) &= \ell E(m).
  \end{align*}
\end{proposition}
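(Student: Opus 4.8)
The plan is to deduce every assertion directly from the integral definitions, writing $g_m(\theta):=\sqrt{1-m\sin^2\theta}$, so that $E(\cdot,m)$ is an antiderivative of $g_m$ and $F(\cdot,m)$ an antiderivative of $1/g_m$. First I would record that for $m\in[0,1)$ one has $1-m\le 1-m\sin^2\theta\le 1$ for every $\theta\in\R$, so $g_m$ and $1/g_m$ are everywhere positive, bounded, and $\pi$-periodic (because $\sin^2$ is). Since $\theta\mapsto 1-m\sin^2\theta$ is real analytic and bounded away from $0$, its square root $g_m$ and reciprocal $1/g_m$ are real analytic on all of $\R$; hence their antiderivatives $E(\cdot,m)$ and $F(\cdot,m)$ are real analytic, and the fundamental theorem of calculus gives the formulas $\partial_xE(x,m)=g_m(x)$ and $\partial_xF(x,m)=1/g_m(x)$, with the displayed bounds being exactly the inequalities $1-m\le g_m^2\le 1$ rearranged.

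Next I would argue monotonicity and bijectivity. The two derivatives are bounded below by $\sqrt{1-m}>0$ and by $1$ respectively, hence strictly positive, so both $E(\cdot,m)$ and $F(\cdot,m)$ are strictly increasing and in particular injective on $\R$. Integrating the same lower bounds yields $F(x,m)\ge x$ and $E(x,m)\ge\sqrt{1-m}\,x$ for $x\ge 0$, together with the mirrored inequalities for $x\le 0$; thus both functions tend to $+\infty$ as $x\to+\infty$ and to $-\infty$ as $x\to-\infty$, and surjectivity onto $\R$ follows from continuity and the intermediate value theorem.

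For the odd symmetry I would substitute $\theta\mapsto-\theta$ and use that $g_m$ is even (as $\sin^2$ is), which converts $\int_0^{-x}$ into $-\int_0^{x}$, giving $E(-x,m)=-E(x,m)$ and $F(-x,m)=-F(x,m)$. For the quasi-periodicity I would split $\int_0^{x+\pi}=\int_0^{\pi}+\int_{\pi}^{x+\pi}$, substitute $\theta\mapsto\theta+\pi$ in the second integral and invoke $\pi$-periodicity of $g_m$ to recover $F(x,m)$ (resp.\ $E(x,m)$), while the symmetry $\sin^2(\pi-\theta)=\sin^2\theta$ about $\theta=\pi/2$ gives $\int_0^\pi(1/g_m)=2K(m)$ and $\int_0^\pi g_m=2E(m)$; combining, $F(x+\pi,m)=F(x,m)+2K(m)$ and $E(x+\pi,m)=E(x,m)+2E(m)$. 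The special values then follow by iteration: from $F(0,m)=0$, $F(\pi/2,m)=K(m)$ and quasi-periodicity one gets $F(\ell\pi/2,m)=\ell K(m)$ for all $\ell\ge 0$ (stepping by $2$ from the two base cases), and oddness extends this to $\ell<0$; the identical computation with $K$ replaced by $E$ handles $E(\ell\pi/2,m)=\ell E(m)$.

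There is no substantial obstacle here; the whole statement is elementary bookkeeping on the integrands. The one point I would take care to state explicitly is the real-analyticity claim (a strictly positive real-analytic function has a real-analytic square root and reciprocal, and antiderivatives of real-analytic functions are real-analytic), and a close second is surjectivity, which genuinely relies on the uniform positive lower bound for the derivative rather than on mere positivity.
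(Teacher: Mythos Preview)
Your proposal is correct and follows essentially the same approach the paper indicates: the paper provides only a one-line remark that analyticity follows because antiderivatives and compositions of analytic functions are analytic, leaving the remaining elementary verifications to the reader, and your argument fills in exactly those details in the natural way.
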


\begin{remark}
  Analyticity follows by the fact that an antiderivative of an analytic function is analytic,  also a composition of analytic functions is analytic.
\end{remark}

\begin{proposition}[Dependence on parameter: Incomplete case]
  For each positive $x>0$, both $F(x,\cdot)$ and $E(x,\cdot)$ are smooth functions on the interval $[0,1)$.
  In addition, $F(x,\cdot)$ is a strictly increasing function such that
  \begin{align*}
    F(x,0) &=x, \quad \lim_{m\to1}F(x,m)=\infty,\\
    \dm F(x,m)&>0, \quad \dm F(x,0)=\frac{1}{4}(2x-\sin{2x}).
  \end{align*}
  and $E(x,\cdot)$ is a strictly decreasing function such that
  \begin{align*}
    E(x,0)&=x, \quad \lim_{m\to1}E(x,m)=\int_0^x|\cos\theta|d\theta, \\
    \dm E(x,m)&<0, \quad \dm E(x,0)=-\frac{1}{4}(2x-\sin{2x}).
  \end{align*}
\end{proposition}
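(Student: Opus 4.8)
The plan is to obtain every claim by differentiating the defining integrals with respect to the parameter. To start, fix $m_0\in(0,1)$: on $[0,m_0]\times[-x,x]$ one has $1-m\sin^2\theta\ge 1-m_0>0$, so the integrands $(1-m\sin^2\theta)^{-1/2}$ and $(1-m\sin^2\theta)^{1/2}$, together with all of their $m$-derivatives, are bounded there by constants depending only on $m_0$. Since these integrands are in fact smooth in $m$ on all of $(-\infty,1)$, the classical theorem on differentiating under the integral sign shows that $F(x,\cdot)$ and $E(x,\cdot)$ are $C^\infty$ on $(-\infty,m_0)$, hence --- as $m_0<1$ is arbitrary --- on $[0,1)$, and that
\[
\partial_m F(x,m)=\int_0^x\frac{\sin^2\theta}{2\,(1-m\sin^2\theta)^{3/2}}\,d\theta,\qquad
\partial_m E(x,m)=-\int_0^x\frac{\sin^2\theta}{2\,(1-m\sin^2\theta)^{1/2}}\,d\theta.
\]

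The monotonicity then falls out of the sign of these two formulas. For $x>0$ the integrand of $\partial_m F(x,m)$ is nonnegative on $[0,x]$ and strictly positive off the finite set $\{\theta:\sin\theta=0\}$, so $\partial_m F(x,m)>0$ and $F(x,\cdot)$ is strictly increasing; the same computation with the opposite overall sign gives $\partial_m E(x,m)<0$ and strict decrease of $E(x,\cdot)$. The value $F(x,0)=E(x,0)=\int_0^x d\theta=x$ is immediate, and the values of $\partial_m F(x,0)$ and $\partial_m E(x,0)$ reduce to the elementary integral $\int_0^x\sin^2\theta\,d\theta$, which I would evaluate with $\sin^2\theta=\tfrac12(1-\cos2\theta)$.

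For the behaviour as $m\to1^-$ I would apply the monotone convergence theorem. As $m\uparrow 1$ the integrand of $F(x,m)$ increases pointwise to $(1-\sin^2\theta)^{-1/2}=|\cos\theta|^{-1}$ and the integrand of $E(x,m)$ decreases pointwise to $|\cos\theta|$, whence $F(x,m)\to\int_0^x|\cos\theta|^{-1}\,d\theta$ and $E(x,m)\to\int_0^x|\cos\theta|\,d\theta$. The second is already the asserted limit; the first is $+\infty$ because $|\cos\theta|^{-1}$ is non-integrable near any zero of $\cos\theta$. (Strictly speaking the divergence needs $[0,x]$ to contain such a zero, i.e.\ $x\ge\pi/2$, which is the range arising in elastica theory; for $0<x<\pi/2$ one gets the finite limit $\log(\sec x+\tan x)$ instead.)

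I do not anticipate a genuine difficulty in any of this. The only slightly delicate point is producing the uniform domination that legitimizes pulling $\partial_m$ inside the integral up to but not including $m=1$, which is exactly why one works on the closed subintervals $[0,m_0]$; and one should resist concluding that the $m\to1$ limit of $F(x,m)$ is infinite merely because the limiting integrand blows up, instead checking genuine non-integrability.
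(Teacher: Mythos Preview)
Your approach is correct and matches the paper's: the paper's own argument is just the remark that differentiating under the integral sign yields the two formulas you derive, from which the signs of $\partial_m F$, $\partial_m E$ and their values at $m=0$ follow. You go beyond the paper by also treating the $m\to1$ limits via monotone convergence, and you correctly observe that the claimed divergence $\lim_{m\to1}F(x,m)=\infty$ actually fails for $0<x<\pi/2$, where the limit is the finite value $\log(\sec x+\tan x)$; this is a genuine imprecision in the proposition as stated (only the case $x\ge\pi/2$, in particular $x=\pi/2$ used for $K(m)$, gives divergence).
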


\begin{remark}
  The signs of $\partial_mF$ and $\partial_mE$ and their values at $m=0$ follow by the following general expressions:
  \begin{align*}
    \dm F(x,m) &= \int_0^x\frac{\sin^2\theta}{2(1-m\sin^2\theta)^{3/2}}d\theta,\\
    \dm E(x,m) &= -\int_0^x\frac{\sin^2\theta}{2(1-m\sin^2\theta)^{1/2}}d\theta.
  \end{align*}
\end{remark}

\begin{corollary}[Dependence on parameter: Complete case]
  $K:[0,1)\to\R$ is a smooth, strictly increasing function such that
  \begin{equation*}
    K(0)=\frac{\pi}{2}, \quad \lim_{m\to1}K(m)=\infty, \quad K'(m)>0, \quad K'(0)=\frac{\pi}{8},
  \end{equation*}
  and $E:[0,1)\to\R$ is a smooth, strictly decreasing function such that
  \begin{equation*}
    E(0)=\frac{\pi}{2}, \quad \lim_{m\to1}E(m)=1, \quad E'(m)<0, \quad E'(0)=-\frac{\pi}{8}.
  \end{equation*}
\end{corollary}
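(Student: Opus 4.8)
The plan is to obtain every assertion as a specialization of the preceding Proposition on the incomplete case, taking $x=\pi/2$. Since by definition $K(m)=F(\pi/2,m)$ and $E(m)=E(\pi/2,m)$, and since $\pi/2>0$, that Proposition applies verbatim: it yields that $F(\pi/2,\cdot)$ and $E(\pi/2,\cdot)$ are smooth on $[0,1)$, the former strictly increasing and the latter strictly decreasing. Hence $K$ is smooth and strictly increasing and $E$ is smooth and strictly decreasing, with $K'(m)=\partial_m F(\pi/2,m)>0$ and $E'(m)=\partial_m E(\pi/2,m)<0$.

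Next I would read off the boundary data. At $m=0$ we have $F(\pi/2,0)=\pi/2$ and $E(\pi/2,0)=\pi/2$, so $K(0)=E(0)=\pi/2$. For the behaviour as $m\to1$, the Proposition gives $\lim_{m\to1}F(\pi/2,m)=\infty$ immediately, while $\lim_{m\to1}E(\pi/2,m)=\int_0^{\pi/2}|\cos\theta|\,d\theta$; since $\cos\theta\geq0$ on $[0,\pi/2]$ this last integral equals $[\sin\theta]_0^{\pi/2}=1$, giving $\lim_{m\to1}E(m)=1$.

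For the derivative values at $m=0$ it is cleanest to use the integral representations in the Remark following that Proposition, namely $\partial_m F(x,m)=\int_0^x\frac{\sin^2\theta}{2(1-m\sin^2\theta)^{3/2}}\,d\theta$ and $\partial_m E(x,m)=-\int_0^x\frac{\sin^2\theta}{2(1-m\sin^2\theta)^{1/2}}\,d\theta$. Evaluating at $m=0$ and $x=\pi/2$ and using the elementary value $\int_0^{\pi/2}\sin^2\theta\,d\theta=\frac{\pi}{4}$, one obtains $K'(0)=\frac12\cdot\frac{\pi}{4}=\frac{\pi}{8}$ and $E'(0)=-\frac12\cdot\frac{\pi}{4}=-\frac{\pi}{8}$.

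There is essentially no obstacle here: the statement is a direct corollary of the incomplete-parameter analysis. The only points deserving a word of care are that the hypothesis $x>0$ of the Proposition is met by $x=\pi/2$, and that one correctly computes the two elementary integrals $\int_0^{\pi/2}\cos\theta\,d\theta=1$ and $\int_0^{\pi/2}\sin^2\theta\,d\theta=\frac{\pi}{4}$ entering the limit of $E$ and the derivatives at $m=0$; both are routine.
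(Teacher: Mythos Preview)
Your proof is correct and is exactly the approach the paper intends: the statement is labeled a Corollary of the preceding Proposition on the incomplete case and carries no separate proof in the paper, so specializing to $x=\pi/2$ and evaluating the two elementary integrals is precisely what is expected. Your use of the integral representations of $\partial_m F$ and $\partial_m E$ from the Remark to compute $K'(0)$ and $E'(0)$ is clean and avoids any ambiguity.
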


\begin{proposition}[Representation of parameter derivatives: Incomplete case]
  The following derivative formulae hold: For $m\in[0,1)$ and $x\in\R$,
  \begin{align*}
    \dm F(x,m) &= \frac{E(x,m)}{2m(1-m)}-\frac{F(x,m)}{2m} - \frac{\sin{x}\cos{x}}{2(1-m)\sqrt{1-m\sin^2{x}}},\\
    \dm E(x,m) &= \frac{E(x,m)-F(x,m)}{2m}.
  \end{align*}
\end{proposition}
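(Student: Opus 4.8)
The plan is to differentiate each integral with respect to the parameter $m$ and then collapse the resulting integrand back onto $F$ and $E$ using the elementary identity $m\sin^2\theta = 1-(1-m\sin^2\theta)$. Differentiation under the integral sign is legitimate here: on any compact subinterval of $[0,1)$ the integrands defining $F(x,\cdot)$ and $E(x,\cdot)$ and their $m$-derivatives are continuous and uniformly bounded on the (compact) interval of integration, so I would invoke the standard theorem for interchanging $\dm$ and $\int$.

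I would treat $\dm E$ first, as it is the shorter computation. Differentiating $E(x,m)=\int_0^x\sqrt{1-m\sin^2\theta}\,d\theta$ gives $\dm E(x,m)=-\tfrac12\int_0^x\sin^2\theta\,(1-m\sin^2\theta)^{-1/2}\,d\theta$. Inserting $m\sin^2\theta=1-(1-m\sin^2\theta)$ splits the integrand as $\tfrac1m\big[(1-m\sin^2\theta)^{-1/2}-(1-m\sin^2\theta)^{1/2}\big]$, whose integral is $\tfrac1m\big(F(x,m)-E(x,m)\big)$; this is exactly the second formula.

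For $\dm F$, differentiating $F(x,m)=\int_0^x(1-m\sin^2\theta)^{-1/2}\,d\theta$ produces $\dm F(x,m)=\tfrac12\int_0^x\sin^2\theta\,(1-m\sin^2\theta)^{-3/2}\,d\theta$, and the same substitution rewrites this as $\tfrac{1}{2m}\big(D(x,m)-F(x,m)\big)$, where $D(x,m):=\int_0^x(1-m\sin^2\theta)^{-3/2}\,d\theta$. The key remaining step is to reduce this order-$3/2$ integral. I would do this by verifying directly that
\[
\frac{d}{d\theta}\!\left(\frac{1}{1-m}\!\left[E(\theta,m)-\frac{m\sin\theta\cos\theta}{\sqrt{1-m\sin^2\theta}}\right]\right)=\frac{1}{(1-m\sin^2\theta)^{3/2}},
\]
using $\partial_\theta E(\theta,m)=\sqrt{1-m\sin^2\theta}$ and the quotient rule, then putting everything over the common denominator $(1-m\sin^2\theta)^{3/2}$ and checking that the numerator simplifies to the constant $1-m$. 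Since the bracket vanishes at $\theta=0$, this identifies $D(x,m)=\tfrac{1}{1-m}\big[E(x,m)-m\sin x\cos x\,(1-m\sin^2 x)^{-1/2}\big]$; substituting into the expression for $\dm F$ and distributing the factor $\tfrac{1}{2m}$ gives precisely the first formula.

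The main obstacle is the antiderivative identity for $D(x,m)$: one must guess (or reverse-engineer) the correct combination of $E(\theta,m)$ and $\tfrac{\sin\theta\cos\theta}{\sqrt{1-m\sin^2\theta}}$ and then push through the algebraic cancellation that reduces a degree-two polynomial in $\sin^2\theta$ to the constant $1-m$; everything else is routine bookkeeping. An alternative to guessing the antiderivative is to integrate $\int_0^x\cos^2\theta\,(1-m\sin^2\theta)^{-3/2}\,d\theta$ by parts (differentiating $\sin\theta$ and integrating the remaining factor), which yields the same reduction of $D$ and hence the same conclusion.
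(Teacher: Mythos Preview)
Your argument is correct and is essentially the same as the paper's: both differentiate under the integral sign, use the identity $m\sin^2\theta=1-(1-m\sin^2\theta)$ to split the integrand, and then invoke the very same antiderivative identity $(1-m\sin^2\theta)^{-3/2}=\frac{1}{1-m}\frac{d}{d\theta}\big[E(\theta,m)-\tfrac{m\sin\theta\cos\theta}{\sqrt{1-m\sin^2\theta}}\big]$ to reduce $D(x,m)$. The only additional content in your write-up is the remark that this antiderivative can also be discovered via integration by parts on $\int\cos^2\theta\,(1-m\sin^2\theta)^{-3/2}d\theta$, which the paper does not mention.
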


\begin{remark}
  The first identity needs a nontrivial trick in addition to the above expression of $\dm F(x,m)$.
  More precisely, we use the identity
  \begin{equation*}
    \int_0^x\frac{d\theta}{(1-m\sin^2\theta)^{3/2}} = \frac{E(x,m)}{1-m} - \frac{m}{1-m} \frac{\sin{x}\cos{x}}{\sqrt{1-m\sin^2{x}}},
  \end{equation*}
  which follows since
  \begin{equation*}
    \frac{1}{(1-m\sin^2\theta)^{3/2}} = \frac{\sqrt{1-m\sin^2\theta}}{1-m} -\frac{m}{1-m} \dx \left( \frac{\sin{x}\cos{x}}{\sqrt{1-m\sin^2{x}}} \right).
  \end{equation*}
  The second identity can easily be obtained by the above expression of $\dm E(x,m)$.
\end{remark}

\begin{corollary}[Representation of parameter derivatives: Complete case]
  The following derivative formulae hold: For $m\in[0,1)$,
  \begin{align*}
    K'(m) &= \frac{E(m)-(1-m)K(m)}{2m(1-m)},\\
    E'(m) &= \frac{E(m)-K(m)}{2m}.
  \end{align*}
\end{corollary}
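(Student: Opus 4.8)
The plan is to derive both identities as immediate specializations of the preceding proposition on parameter derivatives of the \emph{incomplete} elliptic integrals, evaluated at the endpoint $x=\frac{\pi}{2}$. The point is that, by definition, $K(m)=F(\frac{\pi}{2},m)$ and $E(m)=E(\frac{\pi}{2},m)$, so that $K'(m)=\dm F(\frac{\pi}{2},m)$ and $E'(m)=\dm E(\frac{\pi}{2},m)$ with no interchange of limiting operations required: these are literally the definitions of $K$ and $E$ as functions of $m$.

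First I would substitute $x=\frac{\pi}{2}$ into the identity
\[
\dm F(x,m) = \frac{E(x,m)}{2m(1-m)}-\frac{F(x,m)}{2m} - \frac{\sin{x}\cos{x}}{2(1-m)\sqrt{1-m\sin^2{x}}}.
\]
Because $\cos\frac{\pi}{2}=0$, the last term vanishes identically, and using $F(\frac{\pi}{2},m)=K(m)$ and $E(\frac{\pi}{2},m)=E(m)$ this collapses to $K'(m)=\frac{E(m)}{2m(1-m)}-\frac{K(m)}{2m}$; placing the two fractions over the common denominator $2m(1-m)$ gives $K'(m)=\frac{E(m)-(1-m)K(m)}{2m(1-m)}$. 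For the second formula I would simply substitute $x=\frac{\pi}{2}$ into $\dm E(x,m)=\frac{E(x,m)-F(x,m)}{2m}$, which directly yields $E'(m)=\frac{E(m)-K(m)}{2m}$.

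There is essentially no obstacle at this stage; the only thing to verify is the vanishing of the boundary term $\frac{\sin x\cos x}{2(1-m)\sqrt{1-m\sin^2 x}}$ at $x=\frac{\pi}{2}$, which is precisely what makes the complete-case formulae cleaner than their incomplete counterparts. The genuine work sits one level up, in the incomplete-case proposition, whose $\dm F$ identity rests on the nontrivial reduction of $\int_0^x(1-m\sin^2\theta)^{-3/2}\,d\theta$ recorded in the accompanying remark. An alternative, self-contained route—should one wish to bypass the incomplete case—would be to differentiate $K(m)=\int_0^{\pi/2}(1-m\sin^2\theta)^{-1/2}d\theta$ and $E(m)=\int_0^{\pi/2}(1-m\sin^2\theta)^{1/2}d\theta$ under the integral sign (justified on each compact subinterval $[0,1-\delta]$ by dominated convergence) and invoke the same reduction for the resulting $(1-m\sin^2\theta)^{-3/2}$ term; but the shortest and cleanest argument is the specialization described above.
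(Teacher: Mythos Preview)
Your proof is correct and is precisely the intended argument: the paper states the result as a \emph{corollary} of the incomplete-case proposition without separate proof, so the specialization $x=\tfrac{\pi}{2}$ (together with the vanishing of the boundary term via $\cos\tfrac{\pi}{2}=0$) is exactly what is meant.
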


\begin{proposition}[Series expansion by parameter]
  For $m\in[0,1)$,
  \begin{align*}
    K(m) &= \frac{\pi}{2}\sum_{n=0}^\infty\left(\frac{(2n-1)!!}{(2n)!!}\right)^2m^n,\\
    E(m) &= \frac{\pi}{2}\sum_{n=0}^\infty\left(\frac{(2n-1)!!}{(2n)!!}\right)^2\frac{1}{1-2n}m^n,
  \end{align*}
  where $(-1)!!:=1$ and $0!!:=1$.
\end{proposition}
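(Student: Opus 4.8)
The plan is to expand each integrand into a power series in the parameter that converges uniformly on the domain of integration, integrate term by term, and evaluate the resulting trigonometric integrals by Wallis' formula.

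First I would record the two binomial expansions valid for $|t|<1$:
\[
(1-t)^{-1/2}=\sum_{n=0}^\infty\frac{(2n-1)!!}{(2n)!!}\,t^n,\qquad (1-t)^{1/2}=\sum_{n=0}^\infty\frac{1}{1-2n}\frac{(2n-1)!!}{(2n)!!}\,t^n.
\]
These follow from the generalized binomial theorem together with the elementary identities $\binom{-1/2}{n}(-1)^n=\frac{(2n-1)!!}{(2n)!!}$ and $\binom{1/2}{n}(-1)^n=\frac{1}{1-2n}\frac{(2n-1)!!}{(2n)!!}$, both provable by a short induction on $n$ (or read off from $\frac{(2n-1)!!}{(2n)!!}=\binom{2n}{n}4^{-n}$ and the conventions $(-1)!!=0!!=1$). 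Since $\frac{(2n-1)!!}{(2n)!!}=O(n^{-1/2})$ by Stirling's formula, both series converge absolutely, and uniformly on $\{|t|\le t_0\}$ for each $t_0<1$.

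Next I would fix $m\in[0,1)$ and substitute $t=m\sin^2\theta$, noting $0\le m\sin^2\theta\le m<1$ for all $\theta$. Hence, by the Weierstrass $M$-test with the summable majorant $\sum_n\frac{(2n-1)!!}{(2n)!!}m^n$, both series
\[
\bigl(1-m\sin^2\theta\bigr)^{-1/2}=\sum_{n=0}^\infty\frac{(2n-1)!!}{(2n)!!}\,m^n\sin^{2n}\theta
\]
and
\[
\bigl(1-m\sin^2\theta\bigr)^{1/2}=\sum_{n=0}^\infty\frac{1}{1-2n}\frac{(2n-1)!!}{(2n)!!}\,m^n\sin^{2n}\theta
\]
converge uniformly in $\theta\in[0,\pi/2]$, which legitimizes interchanging $\sum$ and $\int_0^{\pi/2}$ in the defining integrals of $K(m)$ and $E(m)$. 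Inserting Wallis' integral $\int_0^{\pi/2}\sin^{2n}\theta\,d\theta=\frac{\pi}{2}\frac{(2n-1)!!}{(2n)!!}$ (a standard integration-by-parts induction) and regrouping then gives exactly the two claimed series.

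The only mildly delicate point is the legitimacy of the term-by-term integration; but since we argue at a fixed $m<1$, the uniform bound $|m\sin^2\theta|^n\le m^n$ together with convergence of $\sum_n\frac{(2n-1)!!}{(2n)!!}m^n$ (which fails precisely at $m=1$, matching $K(1-)=\infty$) makes this routine. Both binomial-coefficient identities and Wallis' formula are classical and may instead simply be cited; the remaining work is bookkeeping.
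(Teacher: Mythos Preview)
Your argument is correct. Both you and the paper reduce to Wallis' integral $\int_0^{\pi/2}\sin^{2n}\theta\,d\theta=\frac{\pi}{2}\frac{(2n-1)!!}{(2n)!!}$, but you reach it by a slightly different route: the paper's remark computes the Taylor coefficients directly, differentiating the integrand $n$ times with respect to $m$ to get $\frac{(2n-1)!!}{2^n}\sin^{2n}\theta\,(1-m\sin^2\theta)^{-(2n-1)/2}$, evaluating at $m=0$, and then integrating; you instead invoke the binomial series for $(1-t)^{\pm 1/2}$, substitute $t=m\sin^2\theta$, and interchange sum and integral via the Weierstrass $M$-test. Your version has the minor advantage of being fully self-contained on convergence---the paper's sketch computes $K^{(n)}(0)$ but does not explicitly argue that the resulting Taylor series actually sums to $K(m)$ on $[0,1)$, which would require either an analyticity argument or, ultimately, the same uniform bound you supply. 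The two computations are otherwise equivalent, and either is entirely standard.
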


These expansions are used in \cite{Miura_LiYau} to estimate the numerical value of the parameter corresponding to the figure-eight elastica.

\begin{remark}
  We briefly demonstrate the computation.
  Since the $n$-th derivative of integrand $(1-m\sin^2\theta)^{-1/2}$ of $K$ is represented by $\frac{(2n-1)!!}{2^n}(\sin^{2n}\theta)(1-m\sin^2\theta)^{-\frac{2n-1}{2}}$, and hence by Wallis' formula $\int_0^{\pi/2}\sin^{2n}\theta d\theta=\frac{\pi}{2}\frac{(2n-1)!!}{(2n)!!}$ we have
  \begin{equation*}
    \frac{1}{n!}K^{(n)}(0)=\frac{(2n-1)!!}{(2n)!!}\int_0^{\pi/2}\sin^{2n}\theta d\theta = \frac{\pi}{2}\left(\frac{(2n-1)!!}{(2n)!!}\right)^2.
  \end{equation*}
  The case of $E$ can similarly be computed.
\end{remark}

\begin{remark}
  The hypergeometric function ${}_2F_1[a,b;c;z]$ is defined for $|z|<1$ and $c\not\in\Z_{\leq0}$ by the power series:
  \begin{equation*}
    {}_2F_1[a,b;c;z] = \sum_{n=0}^\infty \frac{(a)_n(b)_n}{(c)_n}\frac{z^n}{n!},
  \end{equation*}
  where $(q)_0:=1$ and otherwise $(q)_n:=q(q+1)\cdots(q+n-1)$.
  In terms of this function, we have
  \begin{align*}
    K(m) &= \frac{\pi}{2}{}_2F_1[\tfrac{1}{2},\tfrac{1}{2};1;m],\\
    E(m) &= \frac{\pi}{2}{}_2F_1[-\tfrac{1}{2},\tfrac{1}{2};1;m].
  \end{align*}
  The algebraic independence of the values of those functions over $\overline{\Q}$ follows by Andr\'e's theorem \cite{Andre1996}. 
\end{remark}

\subsection{Jacobi elliptic functions}

\begin{definition}[Jacobi amplitude]
  For $m\in[0,1)$ we define the \emph{(Jacobi) amplitude} $\am(\cdot,m):\R\to\R$ by the inverse function of $F(\cdot,m)$:
  $$\am(x,m):=F^{-1}(x,m).$$
\end{definition}

\begin{definition}[Jacobi elliptic functions]\label{def:B2}
  For $m\in [0,1)$ and $x\in\R$ we define the \emph{elliptic sine}, \emph{elliptic cosine}, and \emph{elliptic delta} by
  \begin{align*}
    \sn(x,m) & := \sin(\am(x,m)),\\
    \cn(x,m) & := \cos(\am(x,m)),\\
    \dn(x,m) & := \sqrt{1-m\sin^2(\am(x,m))},
  \end{align*}
  respectively.
  In addition, for $m=1$, we define
  \begin{align*}
    \sn(x,1) & := \tanh{x},\\
    \cn(x,1) & := \sech{x},\\
    \dn(x,1) & := \sech{x}.
  \end{align*}
  All they are called \emph{(Jacobi) elliptic functions}.
\end{definition}

\begin{remark}[Hyperbolic functions]
  For the reader's convenience, we also put down the definitions of some hyperbolic functions:
  \begin{align*}
    \sinh{x} &:= \frac{e^x-e^{-x}}{2},\\
    \cosh{x} &:= \frac{e^x+e^{-x}}{2},\\
    \tanh{x} &:= \frac{\sinh{x}}{\cosh{x}}=\frac{e^x-e^{-x}}{e^x+e^{-x}},\\
    \sech{x} &:= \frac{1}{\cosh{x}} = \frac{2}{e^x+e^{-x}}.
  \end{align*}
\end{remark}

\begin{remark}
  The definition in the case of $m=1$ is independently given but natural in the sense that all they are indeed limit functions as $m\to1$.
\end{remark}

\begin{proposition}[Trigonometric identity]
  For $m\in[0,1]$ and $x\in\R$,
  \begin{align*}
    \cn^2(x,m) + \sn^2(x,m) & = 1, \\
    \dn^2(x,m) + m \sn^2(x,m) &= 1,\\
    \dn^2(x,m) - m \cn^2(x,m) &= 1-m.
  \end{align*}
\end{proposition}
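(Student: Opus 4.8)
The plan is to unwind the definitions of the Jacobi elliptic functions given in Definition~\ref{def:B2} and reduce all three identities to the ordinary Pythagorean identity (in the case $m<1$) and to its hyperbolic analogue (in the case $m=1$).

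First I would fix $m\in[0,1)$ and abbreviate $\phi:=\am(x,m)$, so that by Definition~\ref{def:B2} we have $\sn(x,m)=\sin\phi$, $\cn(x,m)=\cos\phi$, and $\dn(x,m)=\sqrt{1-m\sin^2\phi}$. The first identity $\cn^2+\sn^2=1$ is then immediate from $\sin^2\phi+\cos^2\phi=1$. The second identity $\dn^2+m\sn^2=1$ is nothing but squaring the definition of $\dn$, namely $\dn^2(x,m)=1-m\sin^2\phi=1-m\sn^2(x,m)$. For the third, I would subtract $m$ times the first identity from the second, obtaining $\dn^2-m\cn^2=(\dn^2+m\sn^2)-m(\sn^2+\cn^2)=1-m$.

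Next I would dispose of the endpoint $m=1$, where by Definition~\ref{def:B2} one has $\sn(x,1)=\tanh x$ and $\cn(x,1)=\dn(x,1)=\sech x$. Here all three identities collapse to $\sech^2 x+\tanh^2 x=1$ and $\sech^2 x-\sech^2 x=0=1-m$; the former follows from the hyperbolic Pythagorean identity $\cosh^2 x-\sinh^2 x=1$ by dividing through by $\cosh^2 x$.

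There is essentially no obstacle: the whole argument is a two-line computation once the definitions are in place. The only point worth a remark is the consistency of the $m=1$ definitions with the $m<1$ ones, i.e.\ that they are the pointwise limits as $m\to1$, but this is already recorded in the remark preceding the proposition and need not be reproved.
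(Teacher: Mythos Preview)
Your proof is correct and complete. The paper does not actually supply a proof for this proposition; it is stated as a fact in the appendix, presumably because all three identities are immediate from the definitions in exactly the way you describe. Your derivation of the third identity as a linear combination of the first two is the natural route, and your separate handling of the $m=1$ case via the hyperbolic Pythagorean identity is appropriate.
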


\begin{proposition}[Basic properties of amplitude]
  For each $m\in[0,1)$, the amplitude function is a real analytic, strictly increasing, bijective function from $\R$ to $\R$ such that
  $$\dx\am(x,m)=\dn(x,m).$$
  In addition, we have the odd symmetry and (arithmetic) quasi-periodicity:
  \begin{align*}
    \am(-x,m) &= \am(x,m),\\
    \am(x+2K(m),m) &= \am(x,m)+\pi.
  \end{align*}
  In particular, for any integer $\ell\in\Z$,
  \begin{align*}
    \am\big(\ell K(m),m \big) &= \frac{\ell\pi}{2}.
  \end{align*}
\end{proposition}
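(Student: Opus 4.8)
The plan is to obtain every assertion by inverting the corresponding property of the incomplete elliptic integral of the first kind $F(\cdot,m)$, which was established in the earlier proposition on the dependence of $F$ on its variable; the defining relation $\am(\cdot,m)=F^{-1}(\cdot,m)$ does all the work.

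First I would record that for fixed $m\in[0,1)$ the map $F(\cdot,m):\R\to\R$ is real analytic, strictly increasing, and bijective, with $\dx F(x,m)=(1-m\sin^2x)^{-1/2}>0$ for every $x$. Consequently its inverse $\am(\cdot,m)$ is again a strictly increasing bijection of $\R$, and — since the derivative of $F(\cdot,m)$ is a nonvanishing analytic function — the analytic inverse function theorem shows $\am(\cdot,m)$ is real analytic. Differentiating $F(\am(x,m),m)=x$ in $x$ gives
$$\dx\am(x,m)=\frac{1}{\dx F(\am(x,m),m)}=\sqrt{1-m\sin^2(\am(x,m))}=\dn(x,m),$$
which is the stated derivative formula (the last equality is literally the definition of $\dn$).

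For the symmetry and quasi-periodicity I would set $\phi:=\am(x,m)$, so $F(\phi,m)=x$. The oddness of $F$, namely $F(-\phi,m)=-F(\phi,m)=-x$, gives $\am(-x,m)=-\phi=-\am(x,m)$; here I would flag that the printed identity $\am(-x,m)=\am(x,m)$ should read $\am(-x,m)=-\am(x,m)$, since $\am$ inherits oddness, not evenness, from $F$. Likewise the arithmetic quasi-periodicity $F(\phi+\pi,m)=F(\phi,m)+2K(m)=x+2K(m)$ yields $\am(x+2K(m),m)=\phi+\pi=\am(x,m)+\pi$. Finally, the special values follow at once from the already-recorded identity $F(\ell\pi/2,m)=\ell K(m)$ for $\ell\in\Z$: applying $\am(\cdot,m)$ to both sides gives $\am(\ell K(m),m)=\ell\pi/2$; alternatively one can iterate the quasi-periodicity from the base cases $\am(0,m)=0$ and $\am(K(m),m)=\pi/2$. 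There is no genuine obstacle in this proposition — the only step meriting a moment's attention is justifying \emph{analytic} (not merely $C^\infty$) inverses, which is legitimate precisely because $\dx F(\cdot,m)$ is analytic and nowhere zero.
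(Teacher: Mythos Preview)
Your proposal is correct and follows exactly the approach the paper intends: the paper offers only the one-line remark that ``the analyticity of the amplitude follows since the inverse of an analytic function is analytic,'' leaving all other assertions as immediate consequences of the corresponding properties of $F(\cdot,m)$, which is precisely what you carry out in detail. You are also right to flag the typo in the statement: the displayed identity should read $\am(-x,m)=-\am(x,m)$, consistent with the heading ``odd symmetry.''
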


\begin{remark}
  The analyticity of the amplitude follows since the inverse of an analytic function is analytic.
\end{remark}

\begin{remark}
  By $\dx E(x,m)=\sqrt{1-m\sin^2{x}}$ and $\dx\am(x,m)=\dn(x,m)$ we deduce that
  \begin{equation*}
    \dx \big( E(\am(x,m),m) \big) = \dx E(\am(x,m),m) \dx\am(x,m) = \dn^2(x,m).
  \end{equation*}
\end{remark}

\begin{proposition}[Basic properties of elliptic functions]
  For each parameter $m\in[0,1]$, the elliptic functions $\sn(\cdot,m)$, $\cn(\cdot,m)$, and $\dn(\cdot,m)$ are real analytic, and the following derivative formulae hold:
  \begin{align*}
    \dx \sn(x,m) & =  \cn(x,m) \dn(x,m) , \\
    \dx \cn(x,m) & = - \sn(x,m) \dn(x,m), \\
    \dx \dn(x,m) & = - m \cn(x,m) \sn(x,m).
  \end{align*}
  In addition, for $m\in[0,1)$, we have the odd or even symmetry
  \begin{align*}
    \sn(-x,m) & = -\sn(x,m), \\
    \cn(-x,m) & = \cn(x,m), \\
    \dn(-x,m) & = \dn(x,m),
  \end{align*}
  and also the antiperiodicity or periodicity
  \begin{align*}
    \sn(x+2K(m),m) & = -\sn(x,m), \\
    \cn(x+2K(m),m) & = -\cn(x,m), \\
    \dn(x+2K(m),m) & = \dn(x,m).
  \end{align*}
  In particular, for $m\in[0,1)$ %the elliptic sine and cosine are $4K(m)$-periodic,
  and for any integer $\ell\in\Z$,
  \begin{align*}
    \sn\big(2\ell K(m),m \big) & = 0, \\
    \cn\big(2\ell K(m),m \big) & = (-1)^\ell, \\
    \dn\big(2\ell K(m),m \big) & = 1,
  \end{align*}
  and
  \begin{align*}
    \sn\big( (2\ell+1) K(m),m \big) & = (-1)^\ell, \\
    \cn\big( (2\ell+1) K(m),m \big) & = 0, \\
    \dn\big( (2\ell+1) K(m),m \big) & = \sqrt{1-m}.
  \end{align*}
\end{proposition}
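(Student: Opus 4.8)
The plan is to derive every assertion from the properties of the amplitude $\am(\cdot,m)$ already established above, together with elementary identities for the circular and hyperbolic functions, treating the generic range $m\in[0,1)$ and the degenerate value $m=1$ separately. For the analyticity part with $m\in[0,1)$: the amplitude $\am(\cdot,m)$ is real analytic, being the inverse of the analytic bijection $F(\cdot,m)$ whose derivative never vanishes, and since $\sin$ and $\cos$ are entire, $\sn(\cdot,m)=\sin\circ\am(\cdot,m)$ and $\cn(\cdot,m)=\cos\circ\am(\cdot,m)$ are analytic. Because $1-m\sin^2\theta\geq 1-m>0$, the radicand defining $\dn(\cdot,m)$ is a strictly positive analytic function, so its square root $\dn(\cdot,m)$ is analytic as well. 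For $m=1$ the functions $\tanh$ and $\sech$ are manifestly analytic on $\R$; this is exactly where the separate definition at $m=1$ is needed, since the formula $\sqrt{1-\sin^2\am}=|\cos\am|$ would fail to be smooth.

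Next I would obtain the derivative formulae by the chain rule using $\partial_x\am(x,m)=\dn(x,m)$: this gives $\partial_x\sn=\cos(\am)\,\dn=\cn\dn$ and $\partial_x\cn=-\sin(\am)\,\dn=-\sn\dn$, while differentiating $\dn=(1-m\sin^2\am)^{1/2}$ produces $\partial_x\dn=-m\sin(\am)\cos(\am)\,\partial_x\am\,(1-m\sin^2\am)^{-1/2}=-m\cn\sn$, the factor $\dn$ cancelling between numerator and denominator. For $m=1$ one checks directly that $\partial_x\tanh=\sech^2$ and $\partial_x\sech=-\sech\tanh$, which match the stated formulae since there $\cn=\dn=\sech$ and $\sn=\tanh$.

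Then the parities follow by substituting $\am(-x,m)=-\am(x,m)$ into the definitions and using that $\sin$ is odd while $\cos$ and $\sin^2$ are even; the antiperiodicity and periodicity relations follow by substituting $\am(x+2K(m),m)=\am(x,m)+\pi$ and using that $\sin$ and $\cos$ are $\pi$-antiperiodic while $\sin^2$ is $\pi$-periodic. Finally the special values come from $\am(\ell K(m),m)=\ell\pi/2$: at the argument $2\ell K(m)$ one has $\am=\ell\pi$, hence $\sn=0$, $\cn=(-1)^\ell$, $\dn=\sqrt{1-m\sin^2(\ell\pi)}=1$; at $(2\ell+1)K(m)$ one has $\am=\ell\pi+\tfrac{\pi}{2}$, hence $\sn=\sin(\ell\pi+\tfrac{\pi}{2})=(-1)^\ell$, $\cn=0$, and $\dn=\sqrt{1-m\cos^2(\ell\pi)}=\sqrt{1-m}$.

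The computation is essentially a bookkeeping exercise, so I do not expect a serious obstacle; the only point requiring genuine care is to keep the case $m=1$ separate — there the defining formula via $\am$ degenerates, and the derivative and parity statements must instead be verified by hand for $\tanh$ and $\sech$ — and to recall that the (anti)periodicity and special-value statements are asserted only for $m\in[0,1)$, since $K(1)=\infty$ and no periodicity holds at $m=1$.
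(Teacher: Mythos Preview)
Your proposal is correct. The paper states this proposition without proof; it is presented in the appendix as a collection of standard facts, and no argument or remark follows it. Your derivation, which reduces everything to the already-established properties of the amplitude function (analyticity, $\partial_x\am=\dn$, odd symmetry, arithmetic quasi-periodicity, and the special values $\am(\ell K)=\ell\pi/2$) together with a direct check for the degenerate case $m=1$, is exactly the natural way to supply the missing details and is entirely in line with how the paper organizes the appendix.
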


\bibliography{elastica}

\end{document}